\newtheorem{lemma}{Lemma}
\newtheorem{theorem}{Theorem} 
\newtheorem{definition}{Definition}
\newtheorem{claim}{Claim}
\newtheorem{conjecture}{Conjecture}
\newtheorem{remark}{Remark}
\newcommand{\p}{\mathcal{P}}
\DeclareMathOperator{\ex}{ex}
\title{On the rainbow planar Tur\'an number of paths}
\author[1]{Ervin Gy\H{o}ri} 
\author[2]{Ryan R. Martin}
\author[3,4]{Addisu Paulos}
\author[1]{Casey~Tompkins}
\author[5,6]{Kitti Varga}
\affil[1]{Alfr\'ed R\'enyi Institute of Mathematics, Budapest}
\affil[2]{Iowa State University, Ames}
\affil[3]{E\"otv\"os Lor\'and University, Budapest}
\affil[4]{Addis Ababa University, Addis Ababa}
\affil[5]{Budapest University of Technology and Economics, Budapest}
\affil[6]{MTA-ELTE Egerv\'ary Research Group, Budapest}
\begin{document}
\maketitle

\begin{abstract}
An edge-colored graph is said to contain a rainbow-$F$ if it contains $F$ as a subgraph and every edge of $F$ is a distinct color. The problem of maximizing edges among $n$-vertex properly edge-colored graphs not containing a rainbow-$F$, known as the rainbow Tur\'an problem, was initiated by Keevash, Mubayi, Sudakov and Verstra\"ete.  We investigate a variation of this problem with the additional restriction that the graph is planar, and we denote the corresponding extremal number by $\ex_{\p}^*(n,F)$.  In particular, we determine $\ex_{\p}^*(n,P_5)$, where $P_5$ denotes the $5$-vertex path.   
\end{abstract}

\section{Introduction}
An edge-coloring of a graph $F$ is a \emph{proper edge-coloring} if no two incident edges of $F$ receive the same color. 
An edge-colored graph $F$ is called \emph{rainbow} if no two edges of $F$ receive the same color, and in short we call it a \emph{rainbow-$F$}.
A properly edge-colored graph $G$ is called \emph{rainbow-$F$-free} if it contains no rainbow-$F$ as a subgraph with the underlying edge-coloring of $G$. 

\begin{definition}
Let $n$ be a positive integer and $F$ be a fixed graph. The rainbow Tur\'an number of $F$, denoted as $\ex^{*}(n,F)$, is the maximum number of edges in a properly edge-colored $n$-vertex rainbow-$F$-free graph. 
\end{definition}
Clearly $\ex(n,F)\leq \ex^{*}(n,F)$. Indeed, any properly edge-colored $F$-free graph is rainbow-$F$-free. In fact, Keevash, Mubayi, Sudakov, and Verstra\"ete~\cite{8} proved the following.  
\begin{theorem}[Keevash et al.~\cite{8}]
For any graph $F$ and positive integer $n$,
\[\ex(n,F)\leq\ex^{*}(n,F)\leq \ex(n,F)+o(n^2).\]
\end{theorem}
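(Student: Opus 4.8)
The lower bound requires no work beyond the remark already made: take an extremal $F$-free graph on $n$ vertices, which has $\ex(n,F)$ edges, and properly edge-color it (for instance with $\Delta+1$ colors by Vizing's theorem). Being $F$-free, it is \emph{a fortiori} rainbow-$F$-free, so $\ex^{*}(n,F)\ge \ex(n,F)$. The content of the theorem is the upper bound, and the plan is to combine the Erd\H{o}s--Simonovits supersaturation phenomenon with a counting argument that exploits properness of the coloring.

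Fix $\epsilon>0$, write $v=\abs{V(F)}$, and suppose $G$ is a properly edge-colored $n$-vertex graph with $e(G)\ge \ex(n,F)+\epsilon n^2$; I want to show that $G$ contains a rainbow-$F$ once $n$ is large. First I would invoke supersaturation: there is a constant $c=c(\epsilon,F)>0$ so that $G$ contains at least $c\,n^{v}$ copies of $F$. It therefore suffices to show that the number of \emph{non-rainbow} copies of $F$ in $G$ is $o(n^{v})$, for then some copy must be rainbow.

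To count non-rainbow copies, note that a copy of $F$ is non-rainbow exactly when two of its edges share a color. Here is where properness enters: two edges of the same color can never be incident, so any monochromatic pair of edges is vertex-disjoint and spans exactly four vertices. Since each color class is a matching of size at most $n/2$, the total number of monochromatic pairs of edges is $\sum_c \binom{m_c}{2}\le \tfrac{n}{2}\sum_c m_c = O(n^3)$, where $m_c$ denotes the size of color class $c$. Now fix one of the $\binom{e(F)}{2}$ choices of which two edge-slots of $F$ are to coincide in color; assigning a monochromatic edge-pair to these slots determines four vertices, and the remaining $v-4$ vertices can be placed in at most $n^{v-4}$ ways. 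Hence the number of non-rainbow copies is $O(n^3)\cdot O(n^{v-4})=O(n^{v-1})=o(n^{v})$.

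Comparing the two counts, for $n$ large enough $c\,n^{v}$ exceeds the number of non-rainbow copies, so a rainbow-$F$ exists; thus $\ex^{*}(n,F)\le \ex(n,F)+\epsilon n^2$, and letting $\epsilon\to 0$ gives the claim. The only real obstacle is the supersaturation step---one must know that exceeding the Tur\'an number by a quadratic amount forces $\Omega(n^{v})$ copies rather than merely one---while the rest is elementary counting. The crucial structural gain from properness is that a monochromatic pair of edges spans four vertices rather than three, which is exactly what makes the bad count $O(n^{v-1})$ instead of $O(n^{v})$.
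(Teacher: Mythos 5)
This theorem is not proved in the paper at all---it is quoted from Keevash, Mubayi, Sudakov and Verstra\"ete~\cite{8}---so there is no internal proof to compare against; your argument is correct and is essentially the original one from that reference: supersaturation yields $\Omega(n^{v})$ copies of $F$, while properness forces any repeated color inside a copy to sit on two vertex-disjoint edges, so the $O(n^{3})$ monochromatic edge-pairs lie in only $O(n^{3})\cdot O(n^{v-4})=O(n^{v-1})$ copies, which is dominated for large $n$. The only points worth pinning down are minor: for $v(F)\le 3$ every copy in a properly colored graph is automatically rainbow (your count is vacuous there but the conclusion holds trivially), and the two same-colored edges of a bad copy necessarily occupy a pair of \emph{disjoint} edge-slots of $F$, a fact your four-vertex accounting already uses implicitly.
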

Let $P_k$ denote a path on $k$ vertices.
Recently Johnston, Palmer and Sarkar~\cite{6} initiated the study of rainbow Tur\'an number of paths. 
They obtained sharp upper bounds for the rainbow Tur\'an number of $P_4$ and $P_5$. 
\begin{theorem}[Johnston, Palmer and Sarkar~\cite{6}] \label{sx1} $\empty$
\begin{enumerate}
\item If $n$ is divisible by $4$, then $\ex^*(n,P_4)={3n}/{2}.$
\item If $n$ is divisible by $8$, then $\ex^*(n,P_5)=2n.$  
\end{enumerate}
\end{theorem}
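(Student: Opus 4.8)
The plan is to prove each of the two equalities by exhibiting an extremal construction (the lower bound) and a matching upper bound, with the constructions coming from vertex-disjoint copies of a single dense gadget. For $P_4$, the gadget is $K_4$ equipped with its proper $3$-edge-coloring, whose three color classes are exactly the three perfect matchings; taking $n/4$ disjoint copies gives $6\cdot(n/4)=3n/2$ edges. This coloring is rainbow-$P_4$-free because the two end edges of any $P_4=abcd$ inside a copy of $K_4$ are vertex-disjoint, hence form a perfect matching, hence lie in a single color class, so $c(ab)=c(cd)$ and the path is not rainbow. For $P_5$, the gadget is $K_{4,4}$ with the two parts indexed by $\mathbb{Z}_2^2$ and the Latin-square coloring $c(a_ib_j)=i\oplus j$; taking $n/8$ disjoint copies gives $16\cdot(n/8)=2n$ edges. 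To see this is rainbow-$P_5$-free, note that a $P_5$ in a bipartite graph has its three endpoints on one side, say with $a$-indices $i_1,i_2,i_3$, and the four edge-colors then satisfy $c_1\oplus c_2\oplus c_3\oplus c_4=i_1\oplus i_3\neq 0$; since the four distinct elements of $\mathbb{Z}_2^2$ XOR to $0$, the four colors cannot be pairwise distinct.

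For the upper bound for $P_4$, I would first record the key reformulation: in a properly colored rainbow-$P_4$-free graph, every $P_4=abcd$ satisfies $c(ab)=c(cd)$. From this I extract a degree-structure lemma. If $uv$ is an edge with $\deg(u),\deg(v)\ge 3$, then for each other-neighbor $x$ of $u$ and each other-neighbor $y$ of $v$ with $x\neq y$ the path $xuvy$ forces $c(xu)=c(vy)$; combining these equalities with properness (distinct colors at a common vertex) forces $\deg(u)=\deg(v)=3$ and $N(u)\setminus\{v\}=N(v)\setminus\{u\}$, and a short further argument shows the four vertices involved form an isolated component (a $K_4$ or a subgraph of it). A parallel analysis treats a vertex of degree $\ge 3$ all of whose neighbors have degree $\le 2$, showing that such a vertex either centers a star or sits in a small low-ratio component. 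Each component then has edge-to-vertex ratio at most $3/2$, and summing over components yields $\ex^*(n,P_4)\le 3n/2$.

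The genuinely hard part is the upper bound for $P_5$, and I expect it to be the main obstacle. The difficulty is that the rainbow-$P_5$-free condition is much weaker than its $P_4$ analogue: a proper coloring only guarantees adjacent edges differ, so avoiding a rainbow $P_5=v_1\cdots v_5$ merely requires one coincidence among the three non-adjacent pairs $\{e_1,e_3\},\{e_1,e_4\},\{e_2,e_4\}$, and there is no clean "end edges agree" rule to exploit. Moreover the naive reduction fails: average degree exceeding $4$ (that is, $e(G)>2n$) does not force a subgraph of minimum degree $5$, so one cannot simply pass to a dense subgraph and greedily build the path. The plan instead is to argue by contradiction from $e(G)>2n$, analyzing the palette of colors seen around vertices and extending a rainbow $P_3$ to a rainbow $P_4$ and then to a rainbow $P_5$, while tracking at each step how many colors are blocked by the already-used edges; the crux is a delicate local counting showing that, once the average degree exceeds $4$, some vertex always offers an extension with a fresh color. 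Finally, the hypothesis $8\mid n$ matches the order of the $K_{4,4}$ gadget, and I would note that removing the divisibility assumption requires separately accounting for the few leftover vertices.
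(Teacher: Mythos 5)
You should know at the outset that the paper contains no proof of this statement: Theorem~\ref{sx1} is quoted from Johnston, Palmer and Sarkar~\cite{6}, and the paper only reuses the disjoint-$K_4$ construction of Figure~\ref{f2} (observing it is planar) for its own planar variants. So your attempt can only be judged on its own merits, not against an in-paper argument. On that score, your two lower-bound constructions are correct and cleanly verified: in the properly $3$-edge-colored $K_4$ the color classes are the three perfect matchings, so the end edges of any $P_4$ are forced to share a color; and your $\mathbb{Z}_2^2$ Latin-square coloring of $K_{4,4}$ is proper and rainbow-$P_5$-free by the invariant $c_1\oplus c_2\oplus c_3\oplus c_4=i_1\oplus i_3\neq 0$, whereas four pairwise distinct colors would necessarily be all of $\mathbb{Z}_2^2$ and hence XOR to $0$. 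Your insistence on the Klein-group table is essential and well placed: the cyclic table $c(a_ib_j)=i+j \pmod 4$ fails, since $a_0b_0a_1b_1a_2$ then receives colors $0,1,2,3$. The upper bound in part (1) is also sound in outline and completable exactly as you indicate: rainbow-$P_4$-freeness plus properness gives $c(ab)=c(cd)$ on every $P_4=abcd$; an edge $uv$ with $d(u),d(v)\geq 3$ then forces $d(u)=d(v)=3$, $N(u)\setminus\{v\}=N(v)\setminus\{u\}$, and an isolated $(\leq K_4)$-component, while components whose high-degree vertices form an independent set have ratio strictly below $3/2$; summing over components gives $e(G)\leq 3n/2$. (Two small repairs: the upper bound holds for all $n$, divisibility being needed only for tightness, and ``three endpoints on one side'' should read ``the two endpoints and the middle vertex''.)

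The genuine gap is the upper bound in part (2), and you have in effect conceded it: everything you write there is a statement of intent, not an argument. The claimed crux --- that once the average degree exceeds $4$ ``some vertex always offers an extension with a fresh color'' --- is precisely the content of the Johnston--Palmer--Sarkar theorem, and as a purely local, greedy assertion it cannot be correct: after pruning, $e(G)>2n$ only guarantees a subgraph of minimum degree $3$ (as you note, minimum degree $5$ is unavailable), and your own extremal coloring of $K_{4,4}$ shows a graph in which every vertex has degree exactly $4$ and every attempted extension of a rainbow $P_4$ is blocked by globally coordinated color coincidences. Any successful proof must therefore propagate the structural consequences of rainbow-$P_5$-freeness --- the forced coincidence among the pairs $\{e_1,e_3\},\{e_1,e_4\},\{e_2,e_4\}$ applied simultaneously across many paths, in the spirit of the bow-tie, fish, and $K_{2,3}/K_{2,4}$ coloring lemmas this paper develops for its planar result --- rather than count palette sizes at a single vertex; in~\cite{6} this is a multi-page case analysis. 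As it stands, your proposal proves the lower bounds in both parts and, modulo routine detail, part (1), but part (2) remains half proved.
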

In the same paper they obtained a general upper bound which works for all $P_k$. Moreover they obtained constructions giving lower bound and posed their conjecture.
\begin{theorem}[Johnston, Palmer and Sarkar~\cite{6}] For any positive integers $n$ and $k$, $$\left(\frac{k-1}{2}\right)n+O(1)\leq\ex^*(n,P_k)\leq \left\lceil{\frac{3k-2}{2}}\right\rceil n.$$    
\end{theorem}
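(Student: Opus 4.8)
The plan is to prove the two bounds separately, with the upper bound as the main technical content.

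For the lower bound, the plan is to exhibit a properly edge-colored rainbow-$P_k$-free graph with $\frac{k-1}{2}n+O(1)$ edges, i.e.\ essentially $(k-1)$-regular. Since any $(k-1)$-regular graph on exactly $k$ vertices is $K_k$, and in a disjoint union of copies of $K_k$ every $P_k$ lies inside a single copy and is a Hamiltonian path of that copy, it suffices to properly edge-color $K_k$ so that no Hamiltonian path is rainbow; then $\lfloor n/k\rfloor$ disjoint copies give $\frac{k-1}{2}n+O(1)$ edges with no rainbow $P_k$. The key input is therefore a proper edge-coloring of $K_k$ with no rainbow Hamiltonian path. For $k$ a power of two this is supplied by the Maamoun--Meyniel coloring (index the vertices by $\Z_2^m$ and color $\{x,y\}$ by the nonzero element $x+y$); the remaining values of $k$ are handled by a suitable fixed $(k-1)$-regular gadget on $O(1)$ vertices admitting an analogous coloring, with the uncovered vertices absorbed into the $O(1)$ term. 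Pinning down such a coloring for every $k$ is the one delicate point on this side.

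For the upper bound, the first step is to reduce to a minimum-degree statement. The class of properly edge-colored rainbow-$P_k$-free graphs is closed under taking induced subgraphs, since a proper coloring restricts to a proper coloring and a rainbow $P_k$ in a subgraph is one in the whole graph. Hence if every such graph has a vertex of degree at most $d:=\lceil\frac{3k-2}{2}\rceil$, then deleting a vertex of minimum degree one at a time and summing gives $e(G)\le dn$ by induction on $n$. So it suffices to prove the contrapositive: every properly edge-colored graph with $\delta(G)\ge \lceil\frac{3k-2}{2}\rceil+1$ contains a rainbow path on $k$ vertices.

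The engine for this is the lemma that a properly edge-colored graph with minimum degree $\delta$ contains a rainbow path on at least $\frac{2\delta}{3}+O(1)$ vertices; substituting $\delta\ge\lceil\frac{3k-2}{2}\rceil+1$ then yields a rainbow path on at least $k$ vertices. To prove the lemma I would take a longest rainbow path $P=v_0v_1\cdots v_\ell$ and analyze an endpoint $v_\ell$: a neighbor $u\notin V(P)$ must meet $v_\ell$ in a color already appearing on $P$ (otherwise $P$ extends), and properness caps the number of such neighbors by the number of available path-colors, while there are at most $\ell$ neighbors inside $P$; this gives the weak bound $\delta\le 2\ell-1$. To improve the slope from $\tfrac12$ to $\tfrac23$, the plan is a P\'osa-type rotation: each chord $v_\ell v_i$ that preserves rainbowness produces another longest rainbow path with endpoint $v_{i+1}$, so the same endpoint bound applies to every rotation endpoint, and a careful count of colors across the whole set of rotation endpoints forces $\ell\ge \tfrac{2\delta}{3}-O(1)$.

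The main obstacle I anticipate is exactly this rotation step. A single rotation may destroy the rainbow property, because the rotating edge's color can already occur in the reversed segment, so one must track which rotations are color-admissible and show that enough of them are, all while keeping the color-accounting over the set of endpoints mutually consistent. Extracting the sharp constant $\tfrac23$ from this bookkeeping, rather than settling for the easy $\tfrac12$, is where the real work lies.
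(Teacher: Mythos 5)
You should first be aware that the paper does not prove this statement at all: it is quoted from Johnston, Palmer and Sarkar~\cite{6}, so your proposal can only be measured against that source, not against an in-paper argument. Measured that way, both of your load-bearing steps have genuine gaps, and the lower bound is the more serious one. Your plan stands or falls with the existence, for \emph{every} $k$, of a properly edge-colored $(k-1)$-regular gadget on $O(1)$ vertices with no rainbow $P_k$, and you only actually have this for $k=2^m$ (Maamoun--Meyniel). For other $k$ the colored-$K_k$ route can fail outright: in every proper edge-coloring of $K_3$ every Hamiltonian path is rainbow, and indeed with this paper's convention ($P_k$ has $k$ vertices) the displayed lower bound is false at $k=3$, since a properly colored rainbow-$P_3$-free graph is a matching with $\lfloor n/2\rfloor$ edges --- a sign that the statement is a translation from~\cite{6}, where the path is measured by edges, and that the convention must be pinned down before a construction is chosen. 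Even at $k=5$ the evidence is against colored cliques: the sharp result $\ex^*(n,P_5)=2n$ quoted from~\cite{6} carries the hypothesis $8\mid n$, signalling $8$-vertex $4$-regular blocks rather than colored copies of $K_5$; one such gadget is $K_{4,4}$ with parts indexed by $\Z_2^2$ and $c(a_xb_y)=x+y$, where a rainbow path with four edges would use all four colors, whose sum telescopes to the sum of the two endpoint labels and forces the endpoints equal, a contradiction. So your sentence ``a suitable fixed $(k-1)$-regular gadget \dots is the one delicate point'' concedes precisely the entire content of the bound you are asked to prove.

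On the upper bound, your reduction is sound: the class is closed under subgraphs, so if every member has a vertex of degree at most $d=\lceil(3k-2)/2\rceil$, then repeated deletion gives $e(G)\le dn$. But the engine lemma --- minimum degree $\delta$ forces a rainbow path on at least $2\delta/3$ vertices --- is the crux, and you leave it at the level of anticipation. What you actually establish is only the endpoint count $\delta\le 2\ell$ for a longest rainbow path of length $\ell$ (at most $\ell$ edges at an endpoint carry path colors by properness, and every fresh-colored edge must end on the path), which yields only $\ex^*(n,P_k)\le(2k+O(1))n$, strictly weaker than $\lceil(3k-2)/2\rceil n$. Your rotation idea points the right way, but your stated worry is misdiagnosed: reversing a segment never changes the set of colors on the path, so a rotation with chord $v_\ell v_i$, which deletes the edge $v_iv_{i+1}$, is rainbow-preserving precisely when $c(v_\ell v_i)$ is a color not on the path (or equals $c(v_iv_{i+1})$). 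In particular every fresh-colored chord yields a rotation for free; this clean fact is what the $3/2$-type count in~\cite{6} is built on, and the ``careful count of colors across the whole set of rotation endpoints'' that you defer is exactly the missing proof, not routine bookkeeping.
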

\begin{conjecture}[Johnston, Palmer and Sarkar~\cite{6}]
For $k\geq 6$, $$\ex^*(n,P_k)= \left(\frac{k-1}{2}\right)n+O(1).$$   
\end{conjecture}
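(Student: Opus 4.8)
The plan is to match the lower bound from~\cite{6} with a corresponding upper bound. The lower bound should come from a vertex-disjoint union of complete graphs $K_k$, each given a proper edge-coloring that admits no rainbow Hamiltonian path; for example, $K_4$ with its $3$-edge $1$-factorization has every Hamiltonian path of the form $v_1v_2v_3v_4$ repeating the color of the matching $\{v_1v_2,v_3v_4\}$, and analogous colorings exist for $K_5$ (and, by the classical construction of Maamoun and Meyniel, for suitable larger $k$). Since every component has exactly $k$ vertices, any copy of $P_k$ in the union is a Hamiltonian path of some $K_k$, hence not rainbow, so the union is rainbow-$P_k$-free. Each component contributes $\binom{k}{2}$ edges on $k$ vertices, giving density exactly $(k-1)/2$ per vertex and explaining both the leading term $\left(\frac{k-1}{2}\right)n$ and the additive $O(1)$ from the at most $k-1$ leftover vertices when $k\nmid n$. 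The substance of the conjecture is therefore the matching upper bound $\ex^*(n,P_k)\le\left(\frac{k-1}{2}\right)n+O(1)$ for $k\ge 6$.

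For the upper bound, the first thing I would try is a minimum-degree reduction: by the standard fact that a graph with more than $\frac{k-1}{2}n$ edges contains a subgraph of minimum degree exceeding $\frac{k-1}{2}$, it would suffice to force a rainbow $P_k$ in any properly edge-colored graph of minimum degree a little above $\frac{k-1}{2}$. Unfortunately this cannot work as stated: the extremal construction above is $(k-1)$-regular yet rainbow-$P_k$-free, so even minimum degree $k-1$---roughly twice what the counting argument provides---fails to force a rainbow $P_k$. This is precisely why the Dirac-type / Erd\H{o}s--Gallai-type strategy that settles the \emph{ordinary} number $\ex(n,P_k)\le\frac{k-2}{2}n$ breaks down, and it is the fundamental obstacle.

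Since degree alone is insufficient, the proof must exploit global structure. I would analyze a longest rainbow path $v_1v_2\cdots v_\ell$ in $G$; being rainbow-$P_k$-free forces $\ell\le k-1$. Both endpoints have all their neighbors on the path (otherwise it extends), and I would run a rainbow analogue of P\'osa rotations to generate a large family of rainbow paths on the same vertex set, each constraining how edges and colors may occur. The target of this analysis is a \emph{decomposition statement}: every properly edge-colored rainbow-$P_k$-free graph breaks, up to an $O(1)$ number of edges, into clique-like pieces each on fewer than $k$ vertices (or exactly $k$ and colored without a rainbow Hamiltonian path), so that no piece exceeds density $(k-1)/2$ and the bound follows by summation. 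Equivalently, one argues by induction, peeling off one dense block at a time and charging its edges to its vertices at rate $(k-1)/2$.

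The hard part, and the reason the statement is still only conjectured for $k\ge 6$, is twofold. First, rainbow rotations do not behave as cleanly as ordinary ones: rerouting a path may repeat a color, so each rotation is valid only under color constraints, and tracking which rotations preserve the rainbow property along a path of length up to $k-1$ produces a case analysis that grows rapidly with $k$---presumably why the cases $k=4,5$ in Theorem~\ref{sx1} were tractable. Second, one must rule out \emph{large} dense components that are neither a single $K_k$ nor a disjoint union of small cliques; the delicate point is showing that gluing two near-clique pieces together, or enlarging a clique-like block beyond $k$ vertices while keeping the degrees high, necessarily creates a rainbow $P_k$ that traverses more than one piece. Controlling this interface uniformly in $k$ is where I expect the real difficulty to lie.
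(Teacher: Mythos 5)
This statement is a \emph{conjecture} (due to Johnston, Palmer and Sarkar~\cite{6}); the paper does not prove it and, as of the works cited there, nobody has: the best known general upper bound is $\ex^*(n,P_k)\leq\frac{9k+5}{7}n$ by Ergemlidze, Gy\H{o}ri and Methuku~\cite{1}, which is far above the conjectured $\frac{k-1}{2}n$, and only the case $k=6$ has been settled, by Halfpap~\cite{3}. Your proposal is accordingly not a proof but a research program, and you say so yourself: the entire upper-bound half rests on a rainbow analogue of P\'osa rotation and on a decomposition of rainbow-$P_k$-free graphs into clique-like blocks of density at most $\frac{k-1}{2}$, neither of which you establish, and both of which you explicitly flag as the open difficulty. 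Identifying the obstacles (rotations can repeat colors; near-clique pieces might glue into large dense components) is a fair assessment of why the problem is hard, but it leaves the statement exactly as open as before, so the proposal has a genuine gap --- indeed, the gap \emph{is} the conjecture.

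One further caution on the half you treat as routine: the lower bound via disjoint properly colored copies of $K_k$ requires, for each $k\geq 6$, a proper edge-coloring of $K_k$ with no rainbow Hamiltonian path. The Maamoun--Meyniel construction you invoke is only known to produce such colorings for special orders (powers of $2$), and whether every proper edge-coloring of $K_k$ contains a rainbow Hamiltonian path for general $k$ is itself a well-known open problem, so \say{analogous colorings exist for $K_5$} cannot be waved through; one should either exhibit such a coloring or fall back on the construction actually given in~\cite{6}, whose lower bound $\left(\frac{k-1}{2}\right)n+O(1)\leq\ex^*(n,P_k)$ is already quoted in this paper and need not be reproved.
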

Recently Ergemlidze, Gy\H{o}ri and Methuku~\cite{1} improved the upper bound.
\begin{theorem}[Ergemlidze, Gy\H{o}ri and Methuku~\cite{1}] For any positive integers $n$ and $k$, $$\ex^*(n,P_k)\leq \left(\frac{9k+5}{7}\right)n.$$ \end{theorem}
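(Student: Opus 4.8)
The plan is to prove the bound in the equivalent minimum-degree form: it suffices to show that every properly edge-colored graph $H$ with $\delta(H) > \frac{9k+5}{7}$ contains a rainbow-$P_k$. Indeed, if $G$ is rainbow-$P_k$-free with $e(G) > \frac{9k+5}{7}n$, then iteratively deleting vertices of degree at most $\frac{9k+5}{7}$ removes at most $\frac{9k+5}{7}n < e(G)$ edges in total, so the process cannot exhaust $G$; the nonempty subgraph that remains has minimum degree exceeding $\frac{9k+5}{7}$ and is still rainbow-$P_k$-free, a contradiction. Passing to a component, I may assume $H$ is connected, and since $\delta(H) > \frac{9k+5}{7}$ forces $|V(H)| \ge \delta(H)+1 > k$, the only possible obstruction to a rainbow-$P_k$ is the colored structure, not a shortage of vertices.

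Now fix a longest rainbow path $P = v_1 v_2 \cdots v_\ell$ in $H$ and suppose for contradiction that $\ell \le k-1$. As the coloring is proper, the at most $\ell-1$ colors on $P$ block at most $\ell-1$ edges at the endpoint $v_1$, so at least $\delta(H)-(\ell-1)$ edges at $v_1$ carry colors absent from $P$; each such \emph{fresh} edge must land on $P$, as otherwise it would extend $P$. Writing $A=\{\,i : v_1v_{i+1}\text{ is a fresh edge}\,\}$ and $B=\{\,i : v_\ell v_i\text{ is a fresh edge}\,\}$, both are subsets of $\{1,\dots,\ell-1\}$ of size at least $\delta(H)-(\ell-1)$. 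The structural heart is the classical crossing observation: if some $i\in A\cap B$ has its two chords $v_1v_{i+1}$ and $v_\ell v_i$ colored differently, then deleting the path edge $v_iv_{i+1}$ and inserting these chords yields a rainbow cycle through $V(P)$, which I can open appropriately at a fresh external edge (using connectivity and the large minimum degree) to build a strictly longer rainbow path, contradicting maximality. Hence every index in $A\cap B$ must be color-clashing, and the clash-free extreme $A\cap B=\varnothing$ already gives $2\bigl(\delta(H)-(\ell-1)\bigr)\le |A|+|B|\le \ell-1$, i.e. $\delta(H)\le \tfrac{3}{2}(k-2)$ — precisely the strength of the earlier $\lceil(3k-2)/2\rceil n$ bound, and not yet enough.

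The step I expect to be the main obstacle is upgrading this to the constant $\frac{9}{7}$ by controlling the color-clashing overlap. A clash at $i\in A\cap B$ means the color of $v_1v_{i+1}$ equals that of $v_\ell v_i$; since each color meets each vertex at most once, these coincidences are rigidly constrained and a single color can account for only boundedly many clashes along $P$. The plan is to run Pósa-type rotations that preserve the rainbow property — each rotation inserts a fresh endpoint chord and frees the color of the deleted path edge — so as to produce a large family of longest rainbow paths on $V(P)$ with many distinct endpoints, and then to double-count fresh chords incident to this endpoint set against the $\ell-1$ positions of $P$. The two delicate points are (i) certifying that each rotated path stays rainbow, which forces the inserted chord to avoid all colors then present on the path, and (ii) bounding the total loss caused by color clashes; a discharging scheme that sends one unit of weight from every fresh chord across the positions it may occupy, charging clashing configurations with extra care, is what I expect to produce the denominator $7$.

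Assembling the refined count then yields an inequality on $\delta(H)$ in terms of $k$ that contradicts $\delta(H) > \frac{9k+5}{7}$. Therefore the longest rainbow path has at least $k$ vertices, $H$ contains a rainbow-$P_k$, and consequently $\ex^{*}(n,P_k)\le \frac{9k+5}{7}\,n$.
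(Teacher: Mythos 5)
This theorem appears in the paper only as a quoted result of Ergemlidze, Gy\H{o}ri and Methuku~\cite{1}; the paper itself contains no proof, so your attempt must be judged on its own terms, and on those terms it has a genuine gap. The parts you actually carry out are the standard ones: the reduction to a subgraph $H$ with $\delta(H)>\frac{9k+5}{7}$, and the fresh-edge count at the two endpoints of a longest rainbow path $P=v_1\cdots v_\ell$, which, as you yourself compute, yields only $2\bigl(\delta(H)-(\ell-1)\bigr)\le \ell-1$, i.e.\ a bound of order $\frac{3k}{2}n$ --- no better than the earlier Johnston--Palmer--Sarkar estimate. Everything that would produce the constant $\frac{9k+5}{7}$ --- the rainbow-preserving rotation family and the discharging scheme --- is announced in the subjunctive (``what I expect to produce the denominator $7$'') rather than argued. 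Since improving $\frac{3}{2}$ to $\frac{9}{7}$ \emph{is} the content of the theorem, what you have written is a plan, not a proof.

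Beyond the missing core, two steps you do assert would fail as stated. First, opening the rainbow cycle: after deleting $v_iv_{i+1}$ and inserting two differently colored fresh chords you indeed get a rainbow cycle on $V(P)$, but to lengthen the path you need an edge leaving the cycle whose color avoids \emph{all} $\ell$ colors now present; connectivity and minimum degree supply an edge, not a color-fresh one, and excluding the bad colors costs another $\ell-1$ in the degree count --- exactly the loss that pins you at $\sim\frac{3\ell}{2}$. Second, the clash analysis has no force: properness already makes the chords $v_1v_{i+1}$, $i\in A$, pairwise distinctly colored, so each clashing index consumes its own color; your observation that one color accounts for boundedly many clashes (in fact at most one) is true but compatible with \emph{every} index of $A\cap B$ clashing, so no contradiction or improved inequality follows. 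Iterated rotations compound the difficulty, since each rotation changes the color set of the path and freshness is not preserved, and the large family of rotated endpoints you propose to double-count is never shown to exist. Closing precisely this gap with a careful counting argument around the longest rainbow path is what the proof in~\cite{1} does, and it is the part your proposal leaves open.
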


Continuing the study, Halfpap~\cite{3} obtained a sharp upper bound for the rainbow Tur\'an number of $P_6$ and the result partially confirms the conjecture of Johnston, Palmer and Sarkar.
\begin{theorem}[Halfpap~\cite{3}] For any positive integer $n$,  $\ex^*(n,P_6)\leq {5n}/{2}.$
\end{theorem}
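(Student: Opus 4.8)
The plan is to bound the number of edges of a properly edge-colored, rainbow-$P_6$-free graph $G$ on $n$ vertices by $\frac{5}{2}n$ via induction on $n$, concentrating the real work in a structural analysis of a longest rainbow path. First I would dispose of the easy reductions: if $G$ has a vertex $v$ of degree at most $2$, then $G-v$ is still properly edge-colored and rainbow-$P_6$-free, so by induction $e(G)\le e(G-v)+2\le \frac{5(n-1)}{2}+2<\frac{5}{2}n$. Thus it suffices to treat graphs of minimum degree at least $3$. It also helps to keep the conjectured extremal configuration in mind, namely a disjoint union of suitably colored copies of $K_6$, which has exactly $\frac{5}{2}n$ edges; this tells us the argument must be tight and that no contradiction can be extracted from the minimum-degree hypothesis alone.

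The engine of the proof is an observation about endpoints of a longest rainbow path. Let $P=x_0x_1x_2x_3x_4$ be a longest rainbow path in $G$ (it has at most $5$ vertices, since a rainbow $P_6$ is forbidden; the cases where it is shorter only give stronger constraints), with edge colors $a_1,a_2,a_3,a_4$ in order. If $y\notin\{x_0,\dots,x_4\}$ is a neighbor of the endpoint $x_4$, then $x_0x_1x_2x_3x_4y$ is a $P_6$, and since $G$ is rainbow-$P_6$-free while the coloring is proper (so $c(x_4y)\neq a_4$ automatically), the color $c(x_4y)$ must lie in $\{a_1,a_2,a_3\}$. Because the coloring is proper, $x_4$ has at most three neighbors outside $P$, and a symmetric statement holds for $x_0$. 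This already caps the degree of each endpoint by the three external neighbors plus the at most four vertices of $P$, and the same reasoning applies to every endpoint obtainable from $P$ by a rainbow-preserving rotation.

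To convert these local caps into the global bound $\frac{5}{2}n$, I would run a P\'osa-type rotation--extension argument: starting from $P$, repeatedly use chords at the current endpoint to reroot the path, each time checking that the rotated path remains rainbow, and collect the resulting set of \say{endpoints}. The aim is to show that this process either produces a vertex whose degree is small enough to delete and feed back into the induction, or pins down the component containing $P$ as a bounded-size dense piece (morally a colored $K_6$) on which the edge count can be verified directly to be at most $\frac{5}{2}$ per vertex. A discharging formulation is a convenient way to handle the final accounting: assign each vertex a charge equal to its degree and redistribute charge from high-degree vertices toward the constrained endpoints, using the color restrictions above to guarantee that every vertex finishes with charge at most $5$.

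The hard part will be the chord case analysis. When an endpoint such as $x_4$ is adjacent to interior vertices of $P$, the extension argument must account for the colors of those chords, and a rotation is legitimate only when it preserves the rainbow property, so the color constraints interact with the combinatorics of the rotation in a way that must be tracked carefully. Controlling this interaction, while simultaneously keeping the constant at exactly $\frac{5}{2}$ rather than the weaker bound that the crude endpoint cap ($\deg\le 7$) would yield, is where the delicate work lies; matching the tight examples forces one to rule out, color by color, precisely the configurations that would otherwise let a component exceed average degree $5$.
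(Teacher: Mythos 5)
This statement is not proved in the paper at all: it is quoted as background and attributed to Halfpap~\cite{3}, so there is no in-paper proof to compare against. Judged on its own terms, your proposal is an outline rather than a proof, and the gap is exactly where you place it yourself. The only rigorous content is the degree-$\le 2$ induction step and the observation that an endpoint of a longest rainbow path has at most three neighbors off the path. That observation caps the degree of such an endpoint at $7$, and only of such an endpoint; it says nothing about vertices that are not endpoints of maximum rainbow paths. The two devices you invoke to globalize it --- P\'osa rotation and discharging --- are both stated as aims, not arguments. Rotation is genuinely problematic here: rotating at a chord $x_4x_i$ replaces the edge $x_ix_{i+1}$ by $x_4x_i$, and the rotated path is rainbow only if $c(x_4x_i)$ avoids all the colors that remain, which the rainbow-$P_6$-free hypothesis does not supply; in general the set of endpoints reachable by rainbow-preserving rotations can fail to grow at all, so the degree cap never propagates. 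Likewise, the discharging rule is never specified, and no mechanism is given for pushing the cap from $7$ down to the average degree $5$ that $5n/2$ requires --- you note that matching the tight examples ``forces one to rule out, color by color'' the bad configurations, but that case analysis, which is the entire theorem, is deferred.

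There is also a boundary case your sketch waves through: components whose longest rainbow path has fewer than $5$ vertices do not obviously give ``stronger constraints'' in a usable form, since the endpoint argument then controls fewer colors and the minimum-degree-$3$ reduction still leaves nontrivial graphs (properly colored rainbow-$P_5$-free graphs can have $2n$ edges). For reference, Halfpap's actual proof is not a rotation--discharging argument; it is a lengthy induction on a minimal counterexample with detailed local analysis of the color structure around low-degree vertices. Your plan identifies plausible ingredients, but as it stands the step from the local endpoint cap to the global bound $e(G)\le 5n/2$ is missing, and it is precisely the hard part.
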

The articles~\cite{2,4,5,7} contain  further results on rainbow-type extremal graph problems. In this paper, we consider rainbow Tur\'an numbers in the family of planar graphs. The following notations are important in the upcoming sections.

Let $G$ be a graph. We denote the vertex and the edge sets of $G$ by $V(G)$ and $E(G)$, respectively. The number of vertices and the number of edges of $G$ are denoted by $v(G)$ and $e(G)$, respectively.  For a vertex $v\in V(G)$, $d_G(v)$ denotes the degree of $v$. We may omit the subscript whenever there is no ambiguity about the underlying graph. 

For two vertices $u, v\in V(G)$, $\mathrm{dist}_G(u,v)$ denotes the distance between $u$ and $v$. Let $N_1(v)$ and $N_2(v)$ denote the set of neighbors and that of the second neighbors of $v$, respectively. i.e., $N_1(v)=\{u\ | \ u\in V(G)\ \text{and} \ \mathrm{dist}_G(u,v)=1\}$ and $N_2(v)=\{u \ | \ u\in V(G)\ \text{and} \ \mathrm{dist}_G(u,v)=2\}$. Denote $N[v]=N(v)\cup \{v\}$. For simplicity, we may use the notation $N(v)$ instead of $N_1(v)$. 

The path and the cycle on $k$ vertices are denoted by $P_k$ and $C_k$, respectively. A cycle on $k$ vertices is also referred as a $k$-cycle. For any $S\subseteq V(G)$, $G[S]$ denotes the subgraph of $G$ induced by $S$.

For graphs $G$ and $H$, we denote by $G \cup H$ the disjoint union of $G$ and $H$, and we denote by $G + H$ the join of $G$ and $H$, i.e.\ the graph obtained by connecting each pair of vertices between a vertex-disjoint copy of $G$ and $H$.

\section{Rainbow planar Tur\'an number of paths}
\begin{definition}
Let $n$ be a positive integer and $F$ be a fixed graph. The rainbow planar Tur\'an number of $F$, denoted by $\ex_{\p}^{*}(n,F)$, is the maximum number of edges in an $n$-vertex rainbow-$F$-free edge-colored planar graph. 
\end{definition}
A properly edge-colored planar graph is rainbow-$P_3$-free if and only if the graph is a matching plus isolated vertices. 
Thus, $\ex_{\p}^{*}(n,P_3)=\left\lfloor{{n}/{2}}\right\rfloor$.

An extremal construction showing the first bound in Theorem~\ref{sx1} is sharp can be obtained by taking  disjoint copies of $K_4$ with a proper $3$-edge coloring as pictured in Figure~\ref{f2}.
Clearly, the graph is planar, and hence we have $\ex_{\mathcal{P}}^*(n,P_4)\leq {3n}/{2}$. 


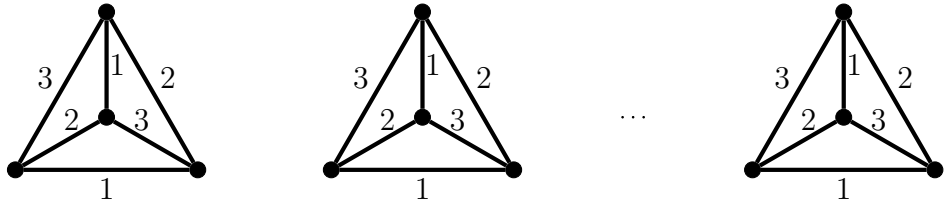
\begin{figure}[ht]
 \begin{center}
 \begin{tikzpicture}[scale=0.7]
  \tikzstyle{vertex}=[draw,circle,fill=black,minimum size=6,inner sep=0]
  
  \node[vertex] (a) at (0:0) {};
  \node[vertex] (b) at (90:2) {};
  \node[vertex] (c) at (210:2) {};
  \node[vertex] (d) at (330:2) {};
  
  \draw[ultra thick] (a) -- (b) node[midway, xshift=4pt] {1};
  \draw[ultra thick] (a) -- (c) node[midway, xshift=4pt, yshift=9pt] {2};
  \draw[ultra thick] (a) -- (d) node[midway, xshift=-4pt, yshift=9pt] {3};
  \draw[ultra thick] (b) -- (c) node[midway, xshift=-6pt, yshift=5pt] {3};
  \draw[ultra thick] (b) -- (d) node[midway, xshift=6pt, yshift=5pt] {2};
  \draw[ultra thick] (c) -- (d) node[midway, yshift=-7pt] {1};
  
  \begin{scope}[shift={(6,0)}]
  \node[vertex] (a) at (0:0) {};
  \node[vertex] (b) at (90:2) {};
  \node[vertex] (c) at (210:2) {};
  \node[vertex] (d) at (330:2) {};
  
  \draw[ultra thick] (a) -- (b) node[midway, xshift=4pt] {1};
  \draw[ultra thick] (a) -- (c) node[midway, xshift=4pt, yshift=9pt] {2};
  \draw[ultra thick] (a) -- (d) node[midway, xshift=-4pt, yshift=9pt] {3};
  \draw[ultra thick] (b) -- (c) node[midway, xshift=-6pt, yshift=5pt] {3};
  \draw[ultra thick] (b) -- (d) node[midway, xshift=6pt, yshift=5pt] {2};
  \draw[ultra thick] (c) -- (d) node[midway, yshift=-7pt] {1};
  \end{scope}
  
  \draw[fill] (9.8,0) circle (0.4pt);
  \draw[fill] (10,0) circle (0.4pt);
  \draw[fill] (10.2,0) circle (0.4pt);
  
  \begin{scope}[shift={(14,0)}]
  \node[vertex] (a) at (0:0) {};
  \node[vertex] (b) at (90:2) {};
  \node[vertex] (c) at (210:2) {};
  \node[vertex] (d) at (330:2) {};
  
  \draw[ultra thick] (a) -- (b) node[midway, xshift=4pt] {1};
  \draw[ultra thick] (a) -- (c) node[midway, xshift=4pt, yshift=9pt] {2};
  \draw[ultra thick] (a) -- (d) node[midway, xshift=-4pt, yshift=9pt] {3};
  \draw[ultra thick] (b) -- (c) node[midway, xshift=-6pt, yshift=5pt] {3};
  \draw[ultra thick] (b) -- (d) node[midway, xshift=6pt, yshift=5pt] {2};
  \draw[ultra thick] (c) -- (d) node[midway, yshift=-7pt] {1};
  \end{scope}
 \end{tikzpicture}
 \caption{A $3$-edge colored graph which is rainbow-$P_4$-free.}
 \label{f2}
 \end{center}
\end{figure}

As mentioned earlier, this paper is mainly focused on determining the rainbow planar Tur\'an number of $P_5$, i.e., $\ex^*_{\p}(n,P_5)$. Interestingly, the values are the same as the sharp upper bound of the rainbow planar Tur\'an number of $P_4$ which is stated in Theorem~\ref{sx1}. 

For longer paths, the following theorem verifies that when $k\ge 8$ we have maximal planar graphs avoiding rainbow paths of length $k$. 
However, the case of $P_6$ and $P_7$ still seems quite interesting.
In section 9, we pose our conjectures concerning the rainbow planar Tur\'an number of these paths. 
\begin{theorem}~\label{y1}
For $n\geq k\geq 8$, 
\[\ex_{\p}^{*}(n,P_k)=3n-6.\]
\end{theorem}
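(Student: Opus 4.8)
The upper bound is immediate: a simple planar graph on $n\ge 3$ vertices has at most $3n-6$ edges by Euler's formula, and every graph in this extremal problem is planar, so $\ex_{\p}^{*}(n,P_k)\le 3n-6$. All the content lies in a matching construction: for every $n\ge k$ I must exhibit a maximal planar graph (a triangulation, which has exactly $3n-6$ edges) carrying a proper edge-coloring with no rainbow-$P_k$. Here I would use the following reduction. For $k\ge 8$ a rainbow-$P_k$ contains a rainbow-$P_8$ on any eight consecutive vertices, so it suffices to forbid rainbow-$P_8$; and since $P_8$ has $7$ edges, any rainbow copy of it uses $7$ distinct colors. Consequently, if the triangulation admits a proper edge-coloring with at most $6$ colors, then no path can be rainbow on more than $6$ edges, so there is no rainbow-$P_8$ and hence no rainbow-$P_k$ for any $k\ge 8$. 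Thus the whole problem reduces to constructing, for every $n$, a triangulation on $n$ vertices whose chromatic index is at most $6$.

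Next, the choice of triangulation. Since $\sum_{v}(6-d_v)=12$ in any sphere triangulation, a proper $6$-edge-coloring forces maximum degree at most $6$, which in turn forces essentially twelve ``defect'' vertices of degree $5$ with all remaining vertices of degree $6$. I would realize such triangulations by a capped antiprismatic tube: take $L$ consecutive cyclic layers, each an $m$-cycle with $m$ fixed (say $m=6$), triangulate the region between consecutive layers as an antiprism, and close the two ends with a single apex joined to the top and bottom layer. A direct degree count shows that the two apices and the two extreme layers absorb all the degree defect, every interior vertex has degree $6$, and the maximum degree is $6$; the graph is a genuine triangulation with $3n-6$ edges, where $n=mL+2$. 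To obtain every residue of $n$, I would either vary the cross-section length among $m\in\{3,4,5,6\}$ (keeping each apex degree at most $6$) or splice in a single bounded ``transition'' region that shifts the vertex count by a prescribed small amount, and treat the finitely many small $n$ directly.

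The crux is the coloring. On the periodic part of the tube the structure repeats with period one layer, so I would design a fixed repeating pattern of $6$ colors on the three types of edge (the cyclic edges inside a layer and the two families of antiprism diagonals) and check that the pattern is proper across any two consecutive layers; this reduces to verifying one period. The genuinely delicate points are the two caps, where the degree drops to $5$ and an apex of degree $6$ meets the rim: here I would establish properness by a finite local case analysis, adjusting the periodic pattern in a constant-size neighborhood of each apex so that it meshes with the bulk coloring. I expect this boundary matching --- producing an explicit proper $6$-edge-coloring that is simultaneously consistent along the whole tube and correct at both caps for every admissible length --- to be the main obstacle. Equivalently, it amounts to showing that this explicit family of triangulations is of Class $1$, which does not follow from Vizing's theorem (which only yields at most $7$ colors) and must be obtained from the explicit coloring itself.
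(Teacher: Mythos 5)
Your upper bound is fine, and the reduction is sound as far as it goes: a rainbow-$P_k$ with $k\ge 8$ contains a rainbow-$P_8$, which needs $7$ distinct colors, so any proper edge-coloring using at most $6$ colors in total is automatically rainbow-$P_8$-free. But this reduction converts the problem into a genuinely hard one, and the proposal stops exactly where the difficulty begins. You must produce, for \emph{every} $n\ge 8$, a triangulation together with a proper $6$-edge-coloring, i.e.\ show that an explicit family of capped antiprismatic tubes is Class~$1$. You acknowledge this yourself (``I expect this boundary matching \dots to be the main obstacle''), and nothing in the proposal actually exhibits the periodic pattern, verifies it across a period, performs the finite case analysis at the caps, or handles the residues of $n$ (including the small cases $n=8,9,\dots$, where the tube construction with $m=6$ gives only $n\equiv 2 \pmod 6$). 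Since Vizing only guarantees $7$ colors and $\Delta\le 6$ is forced by your color budget, the Class~$1$ claim is the entire content of the theorem under your reduction, and it is left unproven. As written, this is a plan for a proof, not a proof.

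It is worth seeing how the paper sidesteps all of this: bounding the \emph{total} number of colors is sufficient but far from necessary. For even $n$ the paper takes the double wheel $W_n=(K_1\cup K_1)+C_{n-2}$ and colors it properly with about $2n$ colors --- all spoke edges at each hub get fresh distinct colors, while the $(n-2)$-cycle is colored alternately with just two colors $a,b$. Any $P_8$ uses at most $2$ edges at each of the two hubs, hence at most $4$ non-cycle edges, hence at least $3$ edges of the two-colored cycle, two of which must share a color; so no $P_8$ is rainbow, even though the coloring is wildly non-rainbow-limited globally. (For odd $n$ the same idea works on $K_2+P_{n-2}$, with the relevant two-colored structure being the even cycle $uv_1\cdots v_{n-2}u$.) Note that the hubs have degree $n-2$, so this graph is nowhere near $6$-edge-colorable --- the paper's construction lies entirely outside the family your reduction allows, which is precisely why its verification is a two-line counting argument rather than a Class~$1$ theorem. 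The missing idea in your approach is that it suffices to force every $P_8$ to contain three edges from one fixed two-colored substructure, rather than to starve the whole graph of colors.
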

\begin{proof}
First, let $n$ be even. Take a double-wheel, $W_n=(K_1\cup K_1)+C_{n-2}$. Note that $W_n$ has two vertices of degree $n-2$ and all the remaining vertices are of degree $4$. Let the $(n-2)$-cycle be $v_1v_2v_3\dots v_{n-1}v_{n-2}v_1$ and the two high-degree vertices be $u$ and $w$. 

We define an edge-coloring $c$ of $W_n$, which uses $2n-2$ distinct colors, as follows. Let $c(uv_i)=a_i$ and $c(wv_i)=b_i$ for any $i\in\{1,2,\dots,n-2\}$, and alternately color the edges of the $(n-2)$-cycle using two other colors $a$ and $b$.

Now we show that with this edge-coloring, $W_n$ is rainbow-$P_k$-free for any $k\geq 8$. Consider a path of length $7$ in $W_n$. This path contains at most $4$ edges which are not in the $(n-2)$-cycle. Thus, the number of edges which are on this cycle is at least $3$ and hence the path is not rainbow. Since $W_n$ is a maximal planar graph, we have $\ex_{\p}^{*}(n,P_k)=3n-6$.

Now let $n$ be odd. In this case, take a maximal planar graph $H_n=K_2+P_{n-2}$. Let the path $P_{n-2}$ be $v_1v_2v_3\dots v_{n-1}v_{n-2}$ and let the remaining two vertices be $u$ and $w$. Define an edge-coloring $c$ of $H_n$, which uses $2n-3$ distinct colors, as follows. Let $c(uv_i)=a_i$ for any $i\in\{2,3,\dots, n-3\}$, let $c(wv_i)=b_i$ for any $i\in\{1,2,3\dots,n-2\}$, let $c(uw)=d$, and alternately color the edges of the even cycle $uv_1v_2v_3\dots v_{n-2}u$ using two other colors $a$ and $b$. 

Again, it is not difficult to see that $c$ is a proper edge-coloring and any path of length $7$ contains at least three edges of the $(n-1)$-cycle $uv_1v_2\dots v_{n-2}u$. Thus, every $P_8$ of $H_n$ is not rainbow. Since $H_n$ is a maximal planar graph, we have $\ex_{\p}^*(n,P_k)=3n-6$. 
\end{proof}

\section{Main results and extremal constructions}
The following theorem is our main result and it gives the exact value of the rainbow planar Tur\'an number of $P_5$. 
\begin{theorem}\label{t4}
For $n\geq 4$, $\ex_{\p}^{*}(n,P_5)=\left\lfloor{ {3n}/{2}}\right\rfloor.$ 
\end{theorem}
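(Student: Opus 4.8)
The statement has two halves: a lower bound requiring an explicit family of colored planar graphs, and a matching upper bound. I expect essentially all of the difficulty to lie in the upper bound, so I would dispose of the construction first.

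For the lower bound I would build the extremal graph as a disjoint union of small gadgets. The workhorse is $K_4$ with a proper $3$-edge-coloring: it is planar, has $6$ edges on $4$ vertices (ratio $3/2$), and, having only $4$ vertices, contains no $P_5$ whatsoever. Thus $\floor{n/4}$ disjoint copies already give $\floor{3n/2}$ edges when $4\mid n$. To cover the other residues I would adjoin a single gadget absorbing $n\bmod 4$ vertices: for $n\equiv 1$ a copy of $K_{2,3}$ together with the edge joining its two degree-$3$ vertices ($5$ vertices, $7$ edges); for $n\equiv 2$ the analogous $K_{2,4}$-plus-edge ($6$ vertices, $9$ edges); and for $n\equiv 3$ a $K_{2,4}$ with one extra vertex joined to two of its four leaves ($7$ vertices, $10$ edges). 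In each case the gadget realizes exactly $\floor{3m/2}$ edges on its $m$ vertices, so together with the $K_4$'s we reach $\floor{3n/2}$ for every $n\ge 4$.

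The point to verify is that each gadget is rainbow-$P_5$-free under a suitable proper coloring, and two mechanisms do the work. Either a component has fewer than $5$ vertices (the $K_4$'s), hence contains no $P_5$; or it is one of the two-hub gadgets, which I would color by a ``Latin-square'' pattern: writing $u,v$ for the hubs and $x_1,\dots,x_t$ for the leaves, I set $c(ux_i)=i$ and $c(vx_i)=\sigma(i)$ for a fixed-point-free involution $\sigma$, after which a direct check shows that every path $x_i\,u\,x_j\,v\,x_k$ repeats a color and that the hub edge $uv$ and any added leaf edge lie in no rainbow $P_5$. It is worth recording that the coloring is genuinely needed here: for $n\equiv 1\pmod 4$ one has $\ex_{\p}(n,P_5)<\floor{3n/2}$, so the extremal colored graphs must contain non-rainbow copies of $P_5$ rather than avoid $P_5$ outright.

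For the upper bound I would show that any properly edge-colored planar rainbow-$P_5$-free graph $G$ satisfies $2e(G)\le 3n$. I would take $G$ edge-maximal, reduce to the $2$-connected case (handling cut vertices, bridges, and components on at most $4$ vertices separately), and then run a discharging argument with initial charge $d(v)-3$ at each vertex, so the total charge equals $2e(G)-3n$ and it suffices to redistribute charge until every final value is at most $0$. The engine is a family of local lemmas extracted from rainbow-$P_5$-freeness: if $v$ has degree at least $4$ then the edges at $v$ carry distinct colors, and for neighbors $u_i,u_j$ of $v$ any further edges $a u_i$ and $b u_j$ must force a color repeat on the path $a\,u_i\,v\,u_j\,b$, which pins the neighbors of a high-degree vertex down to small degree or to rigid color coincidences. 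Combined with planarity via Euler's formula and face counting, these constraints should let me push enough charge from vertices of degree at least $4$ onto their low-degree neighbors to conclude.

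The hard part will be this discharging and structural analysis, not the construction. The local color constraints have to be applied along many different $P_5$'s simultaneously, and the only configurations that threaten density—clusters of several mutually adjacent vertices of degree at least $4$—are exactly where the bookkeeping is delicate, since each such cluster spawns a web of overlapping $P_5$'s whose non-rainbow conditions interact. Ruling these clusters out, or charging them correctly, is where I expect a careful multi-case treatment to be unavoidable, and I anticipate this occupies the bulk of the paper.
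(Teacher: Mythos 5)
Your overall route coincides with the paper's own: the lower bound is built from disjoint properly $3$-edge-colored copies of $K_4$ plus one small two-hub gadget per residue class (your $n\equiv 3\pmod 4$ gadget is literally the paper's $G_7$, and your $K_{2,3}$-based gadget is a variant of its $G_5$, which instead adds an edge between two \emph{leaves}), and the upper bound is local color-rigidity lemmas forced by rainbow-$P_5$-freeness followed by a redistribution argument showing $2e(G)\leq 3n$. The paper implements exactly your discharging, packaged as a modified degree function $\tilde{d}$ with $\tilde{d}(x)=4-d_x/2$ at degree-$4$ vertices and $\tilde{d}(y)=2+e_y/2$ at degree-$2$ vertices, closed by the handshake lemma; notably it never needs Euler's formula or face counting, only vertex-side bookkeeping, and planarity enters just in a few case analyses.

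Two concrete gaps remain. First, a small one: a fixed-point-free involution on $\{1,2,3\}$ does not exist, so your stated coloring of the $K_{2,3}$-plus-hub-edge gadget is vacuous for $n\equiv 1\pmod 4$. A $3$-cycle derangement $\sigma$ does work (one checks the six Hamiltonian paths $x_iux_jvx_k$ and their reverses, and that no $P_5$ can traverse the hub edge), but your involution-based repeat argument does not transfer verbatim; conversely, for $t=4$ the involution genuinely matters, since a $4$-cycle $\sigma$ produces the rainbow path $x_1ux_2vx_3$. Second, and more serious: the proposed reduction ``to the $2$-connected case'' is unsound as stated. The bound $\floor{3n/2}$ carries no additive slack, and \emph{connected} extremal graphs attaining exactly $3n/2$ exist (the paper's $3$-regular $3$-edge-colored construction for even $n$), so two blocks meeting at a cut vertex could a priori carry $3(n_1+n_2)/2=3(n+1)/2>\floor{3n/2}$ edges; naive block induction therefore does not close, and ruling such gluings out is itself a structural statement about the coloring, not a free reduction. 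The paper avoids this entirely: it inducts only on components and on local reducible configurations (a vertex of degree at most $1$, two adjacent degree-$2$ vertices, or a ``nice component'' on $k$ vertices with at most $3k/2$ edges), and the real content — which your sketch leaves open — is the chain of structural theorems showing that after these reductions $\Delta(G)\leq 4$, no two degree-$4$ vertices are adjacent or share a common neighbor, every degree-$4$ vertex has at least two degree-$2$ vertices in $N(u)\cup N_2(u)$ (so $d_x\geq 2$), and every degree-$2$ vertex is charged by at most two degree-$4$ vertices (so $e_y\leq 2$). Without these four facts the discharging cannot end with all final charges nonpositive, so as it stands your upper bound is a plausible plan rather than a proof.
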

The extremal constructions pictured in Figure~\ref{f2} contain no rainbow-$P_5$, and thereby verify that the bound $\ex_{\mathcal{P}}^*(n,P_5)\leq \left\lfloor{{3n}/{2}}\right\rfloor$ is sharp. Next we give a properly edge-colored rainbow-$P_5$-free planar graph $G_n$ on $n\geq5$ vertices with  $e(G_n)=\left\lfloor{{3n}/{2}}\right\rfloor.$
\subsubsection*{Constructions of $G_5$ and $G_7$.}
A $5$-vertex and $7$-edge planar graph $G_5$ with a proper edge-coloring is shown in Figure~\ref{f3}~(left). Moreover, a $7$-vertex and $10$-edge planar graph $G_7$ with a proper edge-coloring is shown in Figure~\ref{f3}~(right). Observe that both $G_5$ and $G_7$ are rainbow-$P_5$-free graphs and $e(G_5)=\left\lfloor{{3n}/{2}}\right\rfloor=7$ and $e(G_7)=\left\lfloor{{3n}/{2}}\right\rfloor=10$.


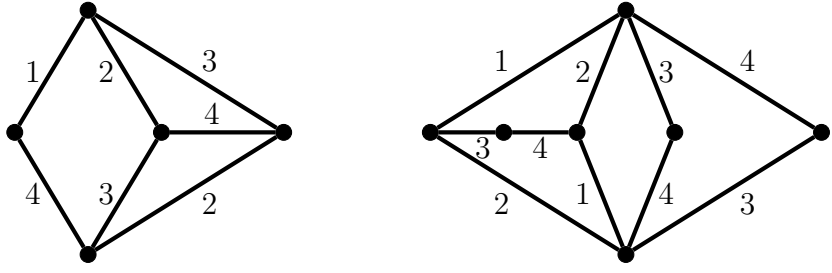
\begin{figure}[ht]
\begin{center}
 \begin{tikzpicture}[scale=0.65]
  \tikzstyle{vertex}=[draw,circle,fill=black,minimum size=6,inner sep=0]
  
  \node[vertex] (a1) at (0,2.5) {};
  \node[vertex] (a2) at (-1.5,0) {};
  \node[vertex] (a3) at (1.5,0) {};
  \node[vertex] (a4) at (4,0) {};
  \node[vertex] (a5) at (0,-2.5) {};
  
  \draw[ultra thick] (a1) -- (a2) node[midway, xshift=-7pt, yshift=0pt] {1};
  \draw[ultra thick] (a1) -- (a3) node[midway, xshift=-7pt] {2};
  \draw[ultra thick] (a1) -- (a4) node[midway, xshift=9pt, yshift=4pt] {3};
  \draw[ultra thick] (a3) -- (a4) node[midway, xshift=-4pt, yshift=7pt] {4};
  \draw[ultra thick] (a5) -- (a2) node[midway, xshift=-7pt, yshift=0pt] {4};
  \draw[ultra thick] (a5) -- (a3) node[midway, xshift=-7pt] {3};
  \draw[ultra thick] (a5) -- (a4) node[midway, xshift=9pt, yshift=-4pt] {2};
  
  \begin{scope}[shift={(11,0)}]
  \node[vertex] (a1) at (0,2.5) {};
  \node[vertex] (a2) at (-4,0) {};
  \node[vertex] (a3) at (-2.5,0) {};
  \node[vertex] (a4) at (-1,0) {};
  \node[vertex] (a5) at (1,0) {};
  \node[vertex] (a6) at (4,0) {};
  \node[vertex] (a7) at (0,-2.5) {};
  
  \draw[ultra thick] (a1) -- (a2) node[midway, xshift=-10pt, yshift=4pt] {1};
  \draw[ultra thick] (a1) -- (a4) node[midway, xshift=-7pt, yshift=0pt] {2};
  \draw[ultra thick] (a1) -- (a5) node[midway, xshift=6pt] {3};
  \draw[ultra thick] (a1) -- (a6) node[midway, xshift=9pt, yshift=4pt] {4};
  \draw[ultra thick] (a2) -- (a3) node[midway, xshift=6pt, yshift=-6pt] {3};
  \draw[ultra thick] (a3) -- (a4) node[midway, yshift=-6pt] {4};
  \draw[ultra thick] (a7) -- (a2) node[midway, xshift=-10pt, yshift=-4pt] {2};
  \draw[ultra thick] (a7) -- (a4) node[midway, xshift=-7pt, yshift=0pt] {1};
  \draw[ultra thick] (a7) -- (a5) node[midway, xshift=6pt] {4};
  \draw[ultra thick] (a7) -- (a6) node[midway, xshift=9pt, yshift=-4pt] {3};
  \end{scope}
 \end{tikzpicture}
 \end{center}
\caption{The graphs $G_5$ and $G_7$.}
\label{f3}
\end{figure}

\subsubsection*{Constructions of $G_n$ when $n$ is even and $n\geq 4$.}
For $n=4$, we may take $G_n$ to be $K_4$. Its proper $3$-edge coloring is already given in Figure~\ref{f2}. Figure~\ref{F1} shows the constructions of $G_n$ with a proper $3$-edge coloring when $n\geq 6$ is even. Clearly, $G_n$ is rainbow-$P_5$-free. Moreover, $G_n$ is $3$-regular and $e(G_n)=\left\lfloor{{3n}/{2}}\right\rfloor$.

\subsubsection*{Constructions of $G_n$ when $n$ is odd and $n\geq 9$.}
For any odd number $n\geq 9$, we may take $G_n$ to be the disjoint union of $G_{n-5}$ and $G_5$, i.e., $G_n=G_{n-5}\cup G_5$. Then $e(G_n)=\left\lfloor{{3(n-5)/}{2}}\right\rfloor+7=\left\lfloor{{3n}/{2}}\right\rfloor$. By induction, $G_n$ is rainbow-$P_5$-free.
\begin{figure}[ht]
 \begin{center}
 \begin{tikzpicture}[scale=0.9]
  \tikzstyle{vertex}=[draw,circle,fill=black,minimum size=6,inner sep=0]
  
  \node[vertex] (a1) at (1,1) [label={[xshift=-7pt, yshift=-1pt] $a_1$}] {};
  \node[vertex] (a2) at (2,1) {};
  \node[vertex] (a3) at (3,1) {};
  \node[vertex] (a4) at (4,1) {};
  \node[vertex] (a5) at (5.5,1) {};
  \node[vertex] (a6) at (6.5,1) {};
  \node[vertex] (a7) at (7.5,1) [label={[xshift=11pt, yshift=-4pt] $a_{n/2}$}] {};
  \node[vertex] (b1) at (1,0) [label={[xshift=-6pt, yshift=-22pt] $b_1$}] {};
  \node[vertex] (b2) at (2,0) {};
  \node[vertex] (b3) at (3,0) {};
  \node[vertex] (b4) at (4,0) {};
  \node[vertex] (b5) at (5.5,0) {};
  \node[vertex] (b6) at (6.5,0) {};
  \node[vertex] (b7) at (7.5,0) [label={[xshift=12pt, yshift=-26pt] $b_{n/2}$}] {};
  
  \draw[fill] (4.6,1) circle (0.4pt);
  \draw[fill] (4.75,1) circle (0.4pt);
  \draw[fill] (4.9,1) circle (0.4pt);
  
  \draw[fill] (4.6,0) circle (0.4pt);
  \draw[fill] (4.75,0) circle (0.4pt);
  \draw[fill] (4.9,0) circle (0.4pt);
  
  \draw[ultra thick] (a1) -- (a2) node[midway, yshift=7pt] {1};
  \draw[ultra thick] (a2) -- (a3) node[midway, yshift=7pt] {2};
  \draw[ultra thick] (a3) -- (a4) node[midway, yshift=7pt] {1};
  \draw[ultra thick] (a5) -- (a6) node[midway, yshift=7pt] {2};
  \draw[ultra thick] (a6) -- (a7) node[midway, yshift=7pt] {1};
  \draw[ultra thick] (b1) -- (b2) node[midway, yshift=-7pt] {1};
  \draw[ultra thick] (b2) -- (b3) node[midway, yshift=-7pt] {2};
  \draw[ultra thick] (b3) -- (b4) node[midway, yshift=-7pt] {1};
  \draw[ultra thick] (b5) -- (b6) node[midway, yshift=-7pt] {2};
  \draw[ultra thick] (b6) -- (b7) node[midway, yshift=-7pt] {1};
  
  \draw[ultra thick] (a1)  to [bend left=60,looseness=0.75] node[midway, yshift=7pt] {2} (a7);
  \draw[ultra thick] (b1)  to [bend right=60,looseness=0.75] node[midway, yshift=-7pt] {2} (b7);
  
  \draw[ultra thick] (a1) -- (b1) node[midway, xshift=6pt] {3};
  \draw[ultra thick] (a2) -- (b2) node[midway, xshift=6pt] {3};
  \draw[ultra thick] (a3) -- (b3) node[midway, xshift=6pt] {3};
  \draw[ultra thick] (a4) -- (b4) node[midway, xshift=6pt] {3};
  \draw[ultra thick] (a5) -- (b5) node[midway, xshift=6pt] {3};
  \draw[ultra thick] (a6) -- (b6) node[midway, xshift=6pt] {3};
  \draw[ultra thick] (a7) -- (b7) node[midway, xshift=6pt] {3};
  
  \begin{scope}[shift={(9,0)}]
  \node[vertex] (a1) at (1,1) [label={[xshift=-7pt, yshift=-1pt] $a_1$}] {};
  \node[vertex] (a2) at (2,1) {};
  \node[vertex] (a3) at (3,1) {};
  \node[vertex] (a4) at (4,1) {};
  \node[vertex] (a5) at (5.5,1) {};
  \node[vertex] (a6) at (6.5,1) {};
  \node[vertex] (a7) at (7.5,1) {};
  \node[vertex] (a8) at (8.5,1) [label={[xshift=11pt, yshift=-4pt] $a_{n/2}$}] {};
  \node[vertex] (b1) at (1,0) [label={[xshift=-6pt, yshift=-22pt] $b_1$}] {};
  \node[vertex] (b2) at (2,0) {};
  \node[vertex] (b3) at (3,0) {};
  \node[vertex] (b4) at (4,0) {};
  \node[vertex] (b5) at (5.5,0) {};
  \node[vertex] (b6) at (6.5,0) {};
  \node[vertex] (b7) at (7.5,0) {};
  \node[vertex] (b8) at (8.5,0) [label={[xshift=12pt, yshift=-26pt] $b_{n/2}$}] {};
  
  \draw[fill] (4.6,1) circle (0.4pt);
  \draw[fill] (4.75,1) circle (0.4pt);
  \draw[fill] (4.9,1) circle (0.4pt);
  
  \draw[fill] (4.6,0) circle (0.4pt);
  \draw[fill] (4.75,0) circle (0.4pt);
  \draw[fill] (4.9,0) circle (0.4pt);
  
  \draw[ultra thick] (a1) -- (a2) node[midway, yshift=7pt] {1};
  \draw[ultra thick] (a2) -- (a3) node[midway, yshift=7pt] {2};
  \draw[ultra thick] (a3) -- (a4) node[midway, yshift=7pt] {1};
  \draw[ultra thick] (a5) -- (a6) node[midway, yshift=7pt] {2};
  \draw[ultra thick] (a6) -- (a7) node[midway, yshift=7pt] {1};
  \draw[ultra thick] (a7) -- (a8) node[midway, yshift=7pt] {2};
  \draw[ultra thick] (b1) -- (b2) node[midway, yshift=-7pt] {1};
  \draw[ultra thick] (b2) -- (b3) node[midway, yshift=-7pt] {2};
  \draw[ultra thick] (b3) -- (b4) node[midway, yshift=-7pt] {1};
  \draw[ultra thick] (b5) -- (b6) node[midway, yshift=-7pt] {2};
  \draw[ultra thick] (b6) -- (b7) node[midway, yshift=-7pt] {1};
  \draw[ultra thick] (b7) -- (b8) node[midway, yshift=-7pt] {2};
  
  \draw[ultra thick] (a1)  to [bend left=60,looseness=0.65] node[midway, yshift=7pt] {3} (a8);
  \draw[ultra thick] (b1)  to [bend right=60,looseness=0.65] node[midway, yshift=-7pt] {3} (b8);
  
  \draw[ultra thick] (a1) -- (b1) node[midway, xshift=6pt] {2};
  \draw[ultra thick] (a2) -- (b2) node[midway, xshift=6pt] {3};
  \draw[ultra thick] (a3) -- (b3) node[midway, xshift=6pt] {3};
  \draw[ultra thick] (a4) -- (b4) node[midway, xshift=6pt] {3};
  \draw[ultra thick] (a5) -- (b5) node[midway, xshift=6pt] {3};
  \draw[ultra thick] (a6) -- (b6) node[midway, xshift=6pt] {3};
  \draw[ultra thick] (a7) -- (b7) node[midway, xshift=6pt] {3};
  \draw[ultra thick] (a8) -- (b8) node[midway, xshift=6pt] {1};
  \end{scope}
 \end{tikzpicture}
 \end{center}
 \label{F1}
\caption{A $3$-edge coloring of $G_n$ when $n/2$ is even (left) and when $n/2$ is odd (right) for even integers $n \ge 6$.}
\end{figure}
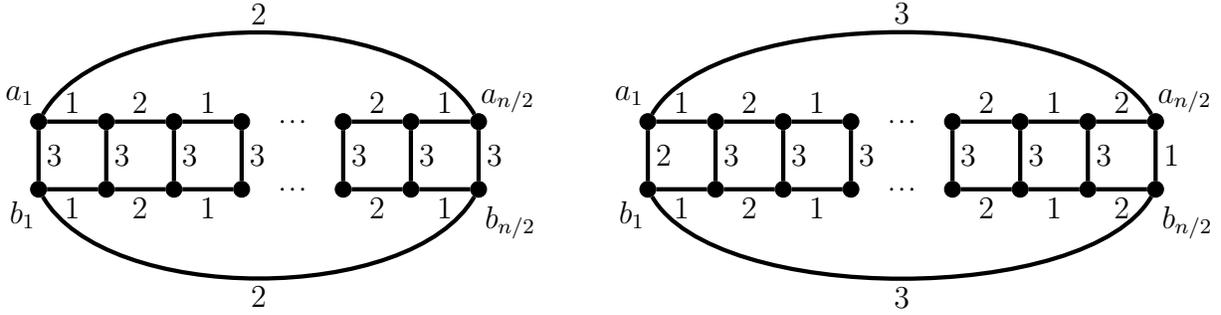

\section{Outline of the proof of Theorem~\ref{t4}}
From the extremal constructions discussed in the previous section, we have $\ex_{\p}^*(n,P_5)\geq \left\lfloor{{3n}/{2}}\right\rfloor$. In the upcoming sections, we prove $\ex_{\p}^*(n,P_5)\leq\left\lfloor{{3n}/{2}}\right\rfloor$, thereby completing the proof of Theorem~\ref{t4}. 
The proof includes an induction argument and discharging methods.
To manage the flow of the proof, we break it into four sections.  

In Section~\ref{12}, some basic preliminaries for the proof are given. The section examines some important graph structures together with proper edge-coloring schemes under the assumption that the graph avoids a rainbow-$P_5$.  

In Section~\ref{er}, we  analyze thoroughly the situations for which we can finish the proof by induction.  
In this section, we also show why the planar Tur\'an number and the rainbow planar Tur\'an number of $P_5$ behave quite differently. 

Section~\ref{fg} gives more refinements on the graph by avoiding vertices of degree at least five and two degree-$4$ vertices sharing a common vertex. Moreover, in this section we show each degree-$4$ vertex has at least two degree-$2$ vertices in its neighbor or second neighbor.  

We finish the proof in Section~\ref{cd}.
In this section we define a modification $\Tilde{d}$ of the degree function, for which $\Tilde{d}(v)\leq 3$ holds for each vertex $v$ of the graph. 
This lets us to conclude the proof using the handshaking lemma.    

\section{Preliminaries}\label{12}
We need the following basic preliminary proper edge-coloring schemes which avoid a rainbow-$P_5$. 
The results are important in the upcoming sections of the proof. 

\begin{definition}
The \emph{bow tie graph} is two $3$-cycles sharing exactly one vertex (see Figure~\ref{ca1} (left)).
We refer to a bow tie with its vertices labelled as in the figure as a $(u_1u_2u, uu_3u_4)$-bow tie. 
\end{definition}

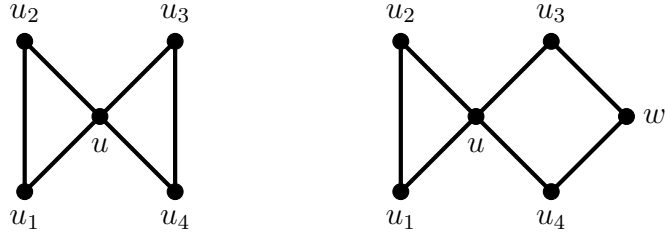
\begin{figure}[ht]
\centering
\begin{tikzpicture}[scale=0.25]
\draw[ultra thick](0,0)--(-4,-4)--(-4,4)--(0,0)--(4,4)--(4,-4)--(0,0);
\draw[fill=black](-4,-4)circle(12pt);
\draw[fill=black](-4,4)circle(12pt);
\draw[fill=black](4,4)circle(12pt);
\draw[fill=black](4,-4)circle(12pt);
\draw[fill=black](0,0)circle(12pt);
\node at (-4,-5.5) {$u_1$};
\node at (-4,5.5) {$u_2$};
\node at (4,5.5) {$u_3$};
\node at (4,-5.5) {$u_4$};
\node at (0,-1.5) {$u$};

\begin{scope}[shift={(20,0)}]
\draw[ultra thick](0,0)--(-4,-4)--(-4,4)--(0,0)--(4,4)(4,-4)--(0,0)(4,4)--(8,0)--(4,-4);
\draw[fill=black](-4,-4)circle(12pt);
\draw[fill=black](-4,4)circle(12pt);
\draw[fill=black](4,4)circle(12pt);
\draw[fill=black](4,-4)circle(12pt);
\draw[fill=black](0,0)circle(12pt);
\draw[fill=black](8,0)circle(12pt);
\node at (-4,-5.5) {$u_1$};
\node at (-4,5.5) {$u_2$};
\node at (4,5.5) {$u_3$};
\node at (4,-5.5) {$u_4$};
\node at (0,-1.5) {$u$};
\node at (9.5,0) {$w$};
\end{scope}
\end{tikzpicture}\qquad
\label{ca1}
\caption{The bow tie graph and the fish graph.}
\end{figure}

\begin{lemma}[The bow tie coloring schemes]\label{c2}
Consider a $(u_1u_2u,uu_3u_4)$-bow tie with a proper edge-coloring $c$ which avoids a rainbow-$P_5$. Without loss of generality, we can assume $c(uu_i)=i$ for any $i\in\{1,2,3,4\}$. Then $c(u_1u_2)=c(u_3u_4)$ and this color is not in $\{1,2,3,4\}$.
\end{lemma}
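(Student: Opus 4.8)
## Proof Plan

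The plan is to exploit the bow tie structure by analyzing the paths of length $4$ (that is, $P_5$'s) forced by the labeling, together with the properness of the coloring. We are given a $(u_1u_2u, uu_3u_4)$-bow tie where $c(uu_i) = i$ for each $i \in \{1,2,3,4\}$, and we must determine the colors of the two ``outer'' edges $u_1u_2$ and $u_3u_4$. First I would observe that since $c$ is proper, the edge $u_1u_2$ is incident to $uu_1$ (color $1$) and $uu_2$ (color $2$), so $c(u_1u_2) \notin \{1,2\}$; symmetrically $c(u_3u_4) \notin \{3,4\}$. The remaining task is to rule out the four ``crossing'' possibilities and to force the two outer colors to coincide.

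The key tool is that the bow tie contains several $P_5$'s, each of which must fail to be rainbow. Consider the path $u_1\,u_2\,u\,u_3\,u_4$: its edge colors are $c(u_1u_2),\,2,\,3,\,c(u_3u_4)$. Since the two middle colors $2,3$ are already distinct, for this path to be non-rainbow we need a repetition among $\{c(u_1u_2), 2, 3, c(u_3u_4)\}$. Next I would run the same argument on the path $u_2\,u_1\,u\,u_4\,u_3$, whose colors are $c(u_1u_2),\,1,\,4,\,c(u_3u_4)$, and on the two further paths obtained by swapping which triangle is traversed first (i.e.\ $u_1 u_2 u u_4 u_3$ with colors $c(u_1u_2),2,4,c(u_3u_4)$, and $u_2 u_1 u u_3 u_4$ with colors $c(u_1u_2),1,3,c(u_3u_4)$). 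Combining the non-rainbow conditions across all four of these paths, together with the properness constraints $c(u_1u_2)\notin\{1,2\}$ and $c(u_3u_4)\notin\{3,4\}$, should pin down the only consistent possibility, namely $c(u_1u_2) = c(u_3u_4)$ with this common color lying outside $\{1,2,3,4\}$.

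Concretely, the casework would go as follows. Writing $\alpha = c(u_1u_2)$ and $\beta = c(u_3u_4)$, the four paths force, respectively: $\alpha \in \{2,3,\beta\}$, $\alpha \in \{1,4,\beta\}$, $\alpha \in \{2,4,\beta\}$, and $\alpha \in \{1,3,\beta\}$ (reading each non-rainbow condition as ``$\alpha$ equals one of the other three colors on that path,'' after noting the middle pair is always distinct). The intersection of the first two conditions forces $\alpha = \beta$ unless $\alpha$ simultaneously lies in $\{2,3\}$ and $\{1,4\}$, which is impossible; similarly the other pair of paths closes off the remaining escape routes where $\alpha$ might equal one of $1,2,3,4$. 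Once $\alpha = \beta$ is established, I would use the properness bounds $\alpha \notin \{1,2\}$ from the edge $u_1u_2$ and $\beta = \alpha \notin \{3,4\}$ from the edge $u_3u_4$ to conclude $\alpha = \beta \notin \{1,2,3,4\}$, as claimed.

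The main obstacle I anticipate is bookkeeping rather than conceptual difficulty: one must be careful to enumerate exactly the right set of $P_5$'s and to correctly translate each ``non-rainbow'' requirement into a color-equality (the middle two edges of each listed path are automatically distinct, so any repetition must involve at least one endpoint edge). A subtle point worth checking is whether a repetition could occur \emph{between the two middle edges and an endpoint edge in a way that does not force $\alpha=\beta$} — for instance $\alpha$ could a priori equal one of the values allowed by a single path. This is precisely why I need all four paths simultaneously: no single path suffices, but their combined constraints eliminate every case except $\alpha = \beta \notin \{1,2,3,4\}$.
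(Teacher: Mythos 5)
Your overall strategy is the same as the paper's: the paper also works with the $P_5$'s through the center $u$ (it uses $u_1u_2uu_3u_4$ to get a trichotomy and then two of your other paths, $u_3u_4uu_1u_2$ and $u_3u_4uu_2u_1$, to kill the remaining cases via a WLOG). However, your translation of the non-rainbow condition is wrong as displayed, and this is not just bookkeeping. ``Not rainbow'' means \emph{some} two of the four edge colors coincide; since the two middle colors on each of your paths are distinct, the repetition must involve $\alpha$ \emph{or} $\beta$ --- not necessarily $\alpha$. So your first path yields the disjunction ``$\alpha\in\{2,3\}$ or $\beta\in\{2,3\}$ or $\alpha=\beta$,'' which after the properness constraints $\alpha\notin\{1,2\}$, $\beta\notin\{3,4\}$ becomes ``$\alpha=3$ or $\beta=2$ or $\alpha=\beta$'' (this is exactly the trichotomy the paper states). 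Your displayed conditions $\alpha\in\{2,3,\beta\}$, $\alpha\in\{1,4,\beta\}$, etc.\ silently drop the $\beta$-clauses, and with them your key inference --- that the first two paths alone force $\alpha=\beta$ --- is false: the assignment $(\alpha,\beta)=(3,1)$ satisfies the corrected constraints from your first two paths without $\alpha=\beta$, and so does $(4,2)$. You half-notice this in your final paragraph (``any repetition must involve at least one endpoint edge''), but the parenthetical you actually compute with assumes the repetition involves $\alpha$ specifically.

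The good news is that your instinct that all four paths together suffice is correct, so the gap is repairable within your plan. Suppose $\alpha\neq\beta$. The corrected constraints read: path $u_1u_2uu_3u_4$ gives $\alpha=3$ or $\beta=2$; path $u_2u_1uu_4u_3$ gives $\alpha=4$ or $\beta=1$; path $u_1u_2uu_4u_3$ gives $\alpha=4$ or $\beta=2$; path $u_2u_1uu_3u_4$ gives $\alpha=3$ or $\beta=1$. If $\alpha=3$, the second forces $\beta=1$ while the third forces $\beta=2$, a contradiction; if $\alpha\neq3$, the first forces $\beta=2$ while the fourth forces $\beta=1$, again a contradiction. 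Hence $\alpha=\beta$, and properness gives $\alpha=\beta\notin\{1,2,3,4\}$ as claimed. After this repair your argument is essentially the paper's proof with the WLOG symmetry unfolded into explicit cases; the paper instead derives the trichotomy once, assumes $c(u_1u_2)=3$ by symmetry, and pins $c(u_3u_4)=3$ from two paths to contradict properness.
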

\begin{proof}
Consider the $5$-vertex path $u_1u_2uu_3u_4$. Since $c$ is a rainbow-$P_5$-free edge-coloring, it can be seen that either $c(u_1u_2)=c(u_3u_4)$ and this color is not in $\{1,2,3,4\}$, or $c(u_1u_2)=3$, or $c(u_3u_4)=2$. 

Suppose the first case does not hold. Without loss of generality, we can assume $c(u_1u_2)=3$. Considering the paths $u_3u_4uu_1u_2$ and $u_3u_4uu_2u_1$, we obtain that $c(u_3u_4)$ is respectively in $\{1,3\}$ and $\{2,3\}$. This implies, $c(u_3u_4)=3$. This is a contradiction as $uu_3$ is an edge incident to $u_3u_4$ and yet colored with $3$. This completes the proof of Lemma~\ref{c2}.
\end{proof}

\begin{definition}
The \emph{fish graph} is a $3$-cycle and a $4$-cycle sharing exactly one vertex (see Figure~\ref{ca1}~(right)).
We refer to the fish graph with vertices labelled as in the figure as a $(u_1u_2u, uu_3wu_4)$-fish. 
\end{definition}
\begin{lemma}[The fish graph coloring schemes]\label{c4}
Given a $(u_1u_2u,uu_3wu_4)$-fish with a proper edge-coloring $c$ which avoids a rainbow-$P_5$, we have $c(wu_3)=c(uu_4)$ and $c(wu_4)=c(uu_3)$, moreover either $c(u_1u_2)=c(uu_3)$ or $c(u_1u_2)=c(uu_4)$. 
\end{lemma}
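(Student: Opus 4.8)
The plan is to pin down all colors at the hub $u$ and then read everything off from the fact that no $5$-vertex path of the fish is rainbow. By properness at $u$ the four edges $uu_1,uu_2,uu_3,uu_4$ receive distinct colors, so exactly as in Lemma~\ref{c2} I may assume $c(uu_i)=i$ for $i\in\{1,2,3,4\}$. Write $r=c(u_1u_2)$, $\gamma=c(u_3w)$, $\delta=c(wu_4)$; properness forces $r\notin\{1,2\}$, $\gamma\neq 3$, $\delta\neq 4$, and $\gamma\neq\delta$. In this notation the two opposite-edge equalities $c(wu_3)=c(uu_4)$ and $c(wu_4)=c(uu_3)$ read $\gamma=4$ and $\delta=3$, while the final assertion reads $r\in\{3,4\}$. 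The engine is that each $5$-vertex path in the fish must repeat a color, and since consecutive edges of such a path are already distinctly colored, ``non-rainbow'' forces one of a short list of coincidences among non-adjacent edges. The relevant paths are the ``long'' ones through the whole $4$-cycle plus one triangle edge at $u$ (such as $u_1uu_3wu_4$ and $u_1uu_4wu_3$), and the paths that start at the triangle edge $u_1u_2$ and then enter the $4$-cycle (such as $u_1u_2uu_3w$ and $u_2u_1uu_4w$).

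First I would establish $\gamma=4$ and $\delta=3$, and the crux of the whole lemma is to rule out the configuration in which both opposite pairs differ, i.e.\ $\gamma\neq 4$ and $\delta\neq 3$. Assuming this, the path $u_1uu_3wu_4$ has colors $1,3,\gamma,\delta$; its consecutive edges are distinct and $\delta\neq 3$, so avoiding a rainbow forces $1\in\{\gamma,\delta\}$, and likewise $u_2uu_3wu_4$ forces $2\in\{\gamma,\delta\}$. Since $\gamma\neq\delta$ this gives $\{\gamma,\delta\}=\{1,2\}$, and up to exchanging the triangle tips $u_1,u_2$ I may take $\gamma=1$, $\delta=2$. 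Then $u_1u_2uu_3w$ carries colors $r,2,3,1$, and as $r\neq 1$ this forces $r=3$; but then $u_2u_1uu_4w$ carries the pairwise distinct colors $3,1,4,2$, a rainbow-$P_5$ — the desired contradiction. Hence at least one opposite pair is equal, and a symmetric argument (swapping $u_3\leftrightarrow u_4$, which interchanges $3\leftrightarrow 4$ and $\gamma\leftrightarrow\delta$), using $u_1uu_4wu_3$ and $u_2uu_4wu_3$, upgrades ``$\delta=3$'' to ``$\gamma=4$'' and vice versa, so both equalities hold.

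With $\gamma=4$ and $\delta=3$ in hand the last assertion is immediate: the path $u_2u_1uu_3w$ now has colors $r,1,3,4$, and since $1,3,4$ are distinct and $r\neq 1$, avoiding a rainbow forces $r\in\{3,4\}$, that is $c(u_1u_2)=c(uu_3)$ or $c(u_1u_2)=c(uu_4)$. I expect the main obstacle to be the case analysis in the middle paragraph: it is the only place where several paths must be combined simultaneously, where the deduction $\{\gamma,\delta\}=\{1,2\}$ and the ``$r=3$ yields the rainbow $3,1,4,2$'' contradiction live, and where one must check that the $u_1\leftrightarrow u_2$ and $u_3\leftrightarrow u_4$ symmetries of the fish genuinely collapse the bookkeeping rather than conceal a distinct subcase. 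The remaining steps are short, single-path deductions once this core is secured.
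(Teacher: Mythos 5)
Your proposal is correct --- I verified each path deduction, the WLOG swap of $u_1,u_2$ (under which the hypothesis $\gamma\neq 4$, $\delta\neq 3$ is invariant), and the ``upgrade'' step --- but it organizes the argument in the reverse order from the paper. The paper pins down the triangle edge first: assuming $c(u_1u_2)$ is a fresh color $5$, it forces $c(u_3w)=5$ via the paths $wu_3uu_2u_1$ and $wu_3uu_1u_2$, then $c(wu_4)=1$ via $u_3wu_4uu_1$, and exhibits the rainbow path $u_3wu_4uu_2$ with colors $5,1,4,2$; having thus $c(u_1u_2)\in\{3,4\}$, it takes $c(u_1u_2)=3$ by the fish symmetry and only then reads off $c(wu_4)=3$ and $c(wu_3)=4$ from the paths $wu_4uu_1u_2$ and $wu_4uu_2u_1$. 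You settle the $4$-cycle first: ruling out ``$\gamma\neq 4$ and $\delta\neq 3$'' by forcing $\{\gamma,\delta\}=\{1,2\}$, then $r=3$, then the rainbow $u_2u_1uu_4w$ with colors $3,1,4,2$; your upgrade from one opposite-edge equality to both via $u_1uu_4wu_3$ and $u_2uu_4wu_3$ (and their mirrors) is sound, since with $\delta=3$ these paths force $\gamma\in\{1,4\}\cap\{2,4\}=\{4\}$, and the final deduction $r\in\{3,4\}$ from $u_2u_1uu_3w$ is a one-liner. Both proofs use the identical toolbox (exhaustive non-rainbow checks on $P_5$'s of the fish after normalizing $c(uu_i)=i$) and are of comparable length, so neither is strictly simpler; but your ordering has a genuine dividend: your derivation of $c(wu_3)=c(uu_4)$ and $c(wu_4)=c(uu_3)$ never touches the edge $u_1u_2$, only the fact that $u$ has two further neighbors off the $4$-cycle, so the opposite-edge equalities hold in that slightly more general configuration and could be cited independently, whereas in the paper they are deduced downstream of the triangle-edge color.
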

\begin{proof}
Without loss of generality, we can assume $c(uu_i)=i$ for any $i\in\{1,2,3,4\}$. Clearly, $c(u_1u_2)$ is different from $1$ and $2$. Suppose for contradiction $c(u_1u_2)\notin\{3,4\}$. The we can assume $c(u_1u_2)=5$. Considering the paths $wu_3uu_2u_1$ and $wu_3uu_1u_2$, we obtain that $c(u_3w)$ is respectively in $\{2,5\}$ and $\{1,5\}$, which implies $c(u_3w)=5$. Consequently, considering the path $u_3wu_4uu_1$, we get $c(wu_4)=1$. But this results in a contradiction as we get a rainbow-$P_5$ path $u_3wu_4uu_2$, whose edges are of colors 5, 1, 4, and 2, in this order. 

Therefore, by the symmetry of the fish graph, we can assume $c(u_1u_2)=3$. Considering the paths $wu_4uu_1u_2$ and $wu_4uu_2u_1$, we get $c(wu_4)$ is in $\{1,3\}$ and $\{2,3\}$, respectively. This implies, $c(wu_4)=3$. For a similar reason, we obtain $c(wu_3)=4$. This completes the proof of Lemma~\ref{c4}. 
\end{proof}

\begin{definition}
A \emph{medium-pair} is three distinct cherries with common end vertices, i.e., $K_{2,3}$, see Figure~\ref{vg1}~(left). A \emph{heavy-pair} is four distinct cherries with common end vertices, i.e., $K_{2,4}$, see Figure~\ref{vg1}~(right). 
\end{definition}

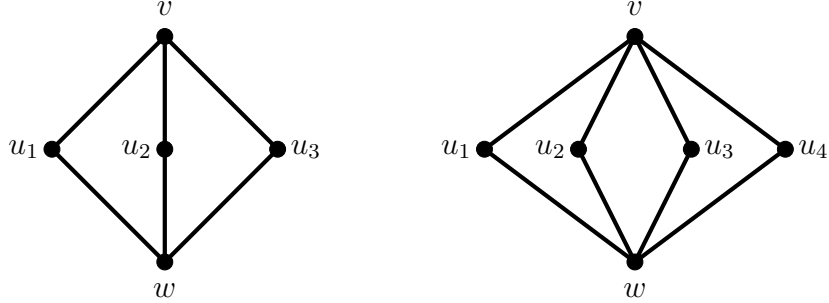
\begin{figure}[ht]
\centering
\begin{tikzpicture}[scale=0.25]
\draw[ultra thick](0,-6)--(0,0)(0,-6)--(6,0)(0,-6)--(-6,0)(0,6)--(6,0)(0,6)--(-6,0)(0,6)--(0,0);
\draw[fill=black] (0,-6) circle (12pt);
\draw[fill=black] (0,6) circle (12pt);
\draw[fill=black] (0,0) circle (12pt);
\draw[fill=black] (6,0) circle (12pt);
\draw[fill=black] (-6,0) circle (12pt);
\node at (0,-7.5) {$w$};
\node at (0,7.5) {$v$};
\node at (-7.5,0) {$u_1$};
\node at (-1.5,0) {$u_2$};
\node at (7.5,0) {$u_3$};

\begin{scope}[shift={(25,0)}]
\draw[ultra thick](0,-6)--(3,0)(0,-6)--(8,0)(0,-6)--(-3,0)(0,-6)--(-8,0)(0,6)--(3,0)(0,6)--(8,0)(0,6)--(-3,0)(0,6)--(-8,0);
\draw[fill=black] (0,-6) circle (12pt);
\draw[fill=black] (0,6) circle (12pt);
\draw[fill=black] (3,0) circle (12pt);
\draw[fill=black] (8,0) circle (12pt);
\draw[fill=black] (-3,0) circle (12pt);
\draw[fill=black] (-8,0) circle (12pt);
\node at (0,-7.5) {$w$};
\node at (0,7.5) {$v$};
\node at (-9.5,0) {$u_1$};
\node at (-4.5,0) {$u_2$};
\node at (9.5,0) {$u_4$};
\node at (4.5,0) {$u_3$};
\end{scope}
\end{tikzpicture}
\label{vg1}
\caption{A medium-pair and a heavy-pair.}
\end{figure}

\begin{lemma}[The medium-pair coloring schemes]\label{l5}
Given a medium-pair with cherries $vu_iw$, where $i\in\{1,2,3\}$, let $c$ be a proper edge-coloring of it which avoids a rainbow-$P_5$. Then $c$ uses either 
\begin{enumerate}
\item four colors, where two of which are applied in edges of two of its cherries, or 
\item three colors, where $c(vu_i)\in\{c(wu_1),\ c(wu_2),\ c(wu_3)\}$ for any $i\in\{1,2,3\}$.  
\end{enumerate}
\end{lemma}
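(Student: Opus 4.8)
The plan is to reduce the whole statement to a short combinatorial classification of the six colors occurring on $K_{2,3}$. Write $x_i = c(vu_i)$ and $y_i = c(wu_i)$ for $i\in\{1,2,3\}$. Since $c$ is proper, the three edges at $v$ force $x_1,x_2,x_3$ to be pairwise distinct, the three edges at $w$ force $y_1,y_2,y_3$ pairwise distinct, and the two edges at each $u_i$ give $x_i\neq y_i$. Because $K_{2,3}$ is bipartite with parts $\{v,w\}$ and $\{u_1,u_2,u_3\}$, every $P_5$ has the form $u_ivu_jwu_k$ for a permutation $(i,j,k)$ of $(1,2,3)$, with edge colors $x_i,x_j,y_j,y_k$. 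Given the properness relations, the only coincidences among these four colors not already forbidden are $x_i=y_j$, $x_i=y_k$, or $x_j=y_k$; hence rainbow-freeness says exactly that for every permutation $(i,j,k)$ at least one of these three equalities holds.

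Next I would encode the equalities by the relation $M=\{(i,j):x_i=y_j\}\subseteq\{1,2,3\}^2$. As the $x_i$ are distinct and the $y_j$ are distinct, $M$ has at most one pair with any given first coordinate and at most one with any given second coordinate, so $M$ is a partial matching of the index set to itself; and $x_i\neq y_i$ means $M$ avoids every diagonal pair $(i,i)$. Each element of $M$ merges one $x$-color with one $y$-color, so the number of colors used by $c$ is exactly $6-|M|$. In this language the rainbow-free condition becomes: for every ordering $(i,j,k)$ the set $M$ meets the three \emph{forward} pairs $\{(i,j),(i,k),(j,k)\}$; equivalently, $M$ is never contained in the backward pairs of any linear order of $\{1,2,3\}$.

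Finally I would run the short case analysis on $|M|$. If $|M|\le 1$, one checks directly that some order avoids $M$ entirely (for $M=\{(a,b)\}$, take any order placing $b$ before $a$), producing a rainbow-$P_5$; so $|M|\ge 2$ and at most four colors occur. If $|M|=2$, the two pairs form either a transposition $\{(a,b),(b,a)\}$ or a directed path $a\to b\to c$; the path is defeated by the order $(c,b,a)$, whose backward pairs contain both $(a,b)$ and $(b,c)$, so only the transposition survives. In that case cherries $vu_aw$ and $vu_bw$ together use exactly the two colors $x_a=y_b$ and $x_b=y_a$, while the third cherry uses two further colors, giving conclusion~(1) with four colors. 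If $|M|=3$, then $M$ is a fixed-point-free bijection, i.e.\ one of the two $3$-cycles, each of which meets every forward triple; here every $x_i$ equals some $y_j$, only three colors occur, and $c(vu_i)\in\{c(wu_1),c(wu_2),c(wu_3)\}$ for all $i$, which is conclusion~(2). The main work — and the only place requiring care — is this $|M|\in\{2,3\}$ bookkeeping: confirming that the transposition and the $3$-cycles genuinely satisfy all six path constraints while the directed paths do not, and then translating the two surviving color patterns into the precise wording of~(1) and~(2).
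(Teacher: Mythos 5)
Your argument is correct, and it takes a genuinely different route from the paper. The paper's proof normalizes $c(vu_i)=i$, asserts with the phrase ``it is not difficult to check'' that at most one of the edges $wu_1,wu_2,wu_3$ may receive a color outside $\{1,2,3\}$, and then, assuming conclusion (2) fails, takes $c(wu_3)=4$ and chases the two specific paths $u_3wu_2vu_1$ and $u_3wu_1vu_2$ to force $c(wu_2)=1$ and $c(wu_1)=2$, landing in conclusion (1). You instead encode the coincidence pattern as the diagonal-avoiding partial matching $M=\{(i,j):x_i=y_j\}$, observe that the number of colors is $6-|M|$, and rephrase rainbow-freeness as ``$M$ meets the forward pairs of every linear order of $\{1,2,3\}$''; the dichotomy then falls out by inspecting the handful of shapes a diagonal-avoiding partial matching can have (empty, singleton, directed path, transposition, $3$-cycle). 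This buys three things the paper does not make explicit: a uniform treatment of all six paths in place of ad hoc path-chasing; a proof of the step the paper leaves to the reader (that $|M|\ge 2$, since a single coincidence, like a directed-path pair of coincidences, is defeated by an explicit ordering); and the sharper two-way classification that the transposition and the $3$-cycles are \emph{exactly} the rainbow-free patterns, not merely the surviving candidates. The only point worth adding in a write-up is a one-line justification that paths of the form $u_iwu_jvu_k$ are reversals of the paths you list, so the six permutations with $v$ in the second position really do exhaust the copies of $P_5$ in $K_{2,3}$; your argument uses this silently but correctly.
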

For instance in the first case, we may have the cherries $vu_1w$ and $vu_2w$ to which $c$ assigns only two colors: $c(vu_1)=c(wu_2)$ and $c(vu_2)=c(wu_1)$. 
On the other hand, in the second case, we may have the edge-coloring $c(vu_1)=c(wu_3),  \ c(vu_2)=c(wu_1)$ and $c(vu_3)=c(wu_2)$.  

For the purpose of simplicity, we sometimes refer to the first case as a \emph{four-coloring scheme} and the second case as a \emph{three-coloring scheme} for the medium-pair. 
Next we give the proof of Lemma~\ref{l5}.

\begin{proof}[Proof of Lemma~\ref{l5}]
Without loss of generality, we can assume $c(vu_i)=i$ for $i\in\{1,2,3\}$. Since $c$ is a rainbow-$P_5$-free coloring, it is not difficult to check that at most one edge in $\{wu_1,wu_2,wu_3\}$ receives a color different from 1, 2, 3.

Suppose (2) does not occur. Then t most one edge in $\{wu_1,wu_2,wu_3\}$ receives a color not in $\{1,2,3\}$.
Without loss of generality, we can assume $c(wu_3)=4$. Considering the paths $u_3wu_2vu_1$ and $u_3wu_1vu_2$, we get $c(wu_2)=1$ and $c(wu_1)=2$. This completes the proof of Lemma~\ref{l5}.  
\end{proof}
\begin{lemma}\label{ws}
Let $G$ be a rainbow-$P_5$-free graph containing a medium pair and let $vu_iw$ be the $3$ cherries forming this medium-pair where $i\in\{1,2,3\}$. 
Then $v$ is adjacent to at most one vertex in $V(G)\setminus \{u_1,u_2, u_3,w\}$.
\end{lemma}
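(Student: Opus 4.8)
The plan is to argue by contradiction: suppose $v$ has two neighbors $x,y\in V(G)\setminus\{u_1,u_2,u_3,w\}$. First I would record the local constraint at $v$: since $c$ is proper, the five edges $vu_1,vu_2,vu_3,vx,vy$ receive pairwise distinct colors, so after normalizing $c(vu_i)=i$ we have $c(vx),c(vy)\notin\{1,2,3\}$ and $c(vx)\neq c(vy)$. The whole argument then rests on feeding paths of the shape $z\,v\,u_i\,w\,u_j$ (with $z\in\{x,y\}$ and $i\neq j$) into the rainbow-$P_5$-free hypothesis; note that all five vertices of such a path are distinct precisely because $z$ lies outside $\{u_1,u_2,u_3,w\}$.

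Next I would split according to the two coloring schemes furnished by Lemma~\ref{l5}. In the three-coloring scheme the three edges $wu_1,wu_2,wu_3$ carry all of the colors $1,2,3$ with $c(wu_i)\neq i$, i.e.\ a derangement of $\{1,2,3\}$. Since $c(vz)\notin\{1,2,3\}$ for $z\in\{x,y\}$, I would choose indices $i\neq j$ so that $i$, $c(wu_i)$ and $c(wu_j)$ are three distinct colors; this is possible because the permutation is a $3$-cycle, so one avoids the unique $j_0$ with $c(wu_{j_0})=i$. Then $z\,v\,u_i\,w\,u_j$ carries the four distinct colors $c(vz),i,c(wu_i),c(wu_j)$ and is a rainbow-$P_5$, a contradiction; here even a single outside neighbor is already forbidden.

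In the four-coloring scheme I would relabel the $u_i$ so that the cherries $vu_1w,vu_2w$ use only $\{1,2\}$, which by properness forces $c(wu_1)=2$ and $c(wu_2)=1$, while the third cherry must introduce the fourth color, so $c(wu_3)=4$ (it is not in $\{1,2,3\}$, since otherwise only three colors would appear). For an outside neighbor $z$ the path $z\,v\,u_3\,w\,u_1$ has colors $c(vz),3,4,2$; avoiding a rainbow-$P_5$ forces $c(vz)\in\{2,3,4\}$, and since $c(vz)\notin\{2,3\}$ we conclude $c(vz)=4$. Applying this to both $x$ and $y$ gives $c(vx)=c(vy)=4$, contradicting that the edges at $v$ are properly colored. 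Combining the two cases shows that $v$ has at most one neighbor outside $\{u_1,u_2,u_3,w\}$.

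I expect the main obstacle to be organizational rather than deep: one must verify in every subcase that the selected $P_5$ genuinely uses five distinct vertices and four distinct colors, which amounts to the derangement bookkeeping in the three-coloring scheme and the correct identification of the \emph{odd} cherry in the four-coloring scheme. A subtlety worth stating explicitly is that the lemma only asserts ``at most one'': the four-coloring scheme really does permit a single outside neighbor (necessarily of color $4$), so the contradiction must be extracted from having \emph{two} such neighbors forced to share that color, not from a single one.
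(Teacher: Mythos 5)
Your proof is correct and follows essentially the same route as the paper: both split on the two coloring schemes of Lemma~\ref{l5}, show the three-coloring scheme forbids even one outside neighbor of $v$ via a rainbow path $z\,v\,u_i\,w\,u_j$, and show the four-coloring scheme forces every outside edge at $v$ to carry the fourth color, so properness at $v$ permits at most one such neighbor. The only differences are cosmetic --- your derangement bookkeeping versus the paper's concrete normalized colorings, and your single forcing path in the four-coloring case where the paper intersects the constraints from two paths.
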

\begin{proof}
Let $S=\{u_1,u_2,u_3,w\}$.
We distinguish two cases based on the coloring scheme of the medium-pair as stated in Lemma~\ref{l5}.

\medskip

\noindent \textit{Case 1: The medium-pair is colored with the three-coloring scheme.}

\smallskip

Without loss of generality suppose, $c(vu_1)=c(wu_3)=1, \ c(vu_2)=c(wu_1)=2$ and $c(vu_3)=c(wu_2)=3$.
In this case, we show that there is no vertex $v^*\in V(G)\setminus S$ such that $vv^* \in E(G)$. Indeed, suppose such a vertex exists in $G$. Clearly, $c(vv^*)\notin \{1,2,3\}$, so we can assume $c(vv^*)=4$. 
However, this results in a contradiction as the $v^*vu_1wu_2$ is a rainbow-$P_5$.

\medskip

\noindent \textit{Case 2: The medium-pair is colored with the four-coloring scheme.}

\smallskip

Without loss of generality, we can assume $c(vu_1)=c(wu_2)=1,\  c(vu_2)=c(wu_1)=2,\ c(vu_3)=3$ and $c(wu_3)=4$. Assume that there exists a vertex $v^*\in V(G)\backslash S$ such that $vv^*\in E(G)$. Considering the paths $v^*vu_1wu_3$ and $v^*vu_2wu_3$, we obtain that $c(vv^*)$ is in $\{2,4\}$ and $\{1,4\}$, respectively. Thus, $c(u_1v)=4$. This implies that $v$ cannot be adjacent to more than a vertex in $V(G)\backslash S$. This completes the proof of Lemma~\ref{ws}.
\end{proof}

\begin{lemma}[The heavy-pair coloring schemes]\label{cv1}
Consider a heavy-pair using the cherries $\ vu_iw$, where $i\in\{1,2,3,4\}$, with a proper edge-coloring $c$ which avoids a rainbow-$P_5$. Then $c$ uses only four colors and without loss of generality, we can assume $c(wu_2)=c(vu_1)$,  $c(wu_1)=c(vu_2)$, $c(wu_4)=c(vu_3)$, and $c(wu_3)=c(vu_4)$.
\end{lemma}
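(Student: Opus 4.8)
The plan is to leverage the already-established medium-pair coloring schemes (Lemma~\ref{l5}) by observing that a heavy-pair $K_{2,4}$ contains four distinct medium-pairs, one for each way of omitting a single cherry. First I would set up notation: without loss of generality assume $c(vu_i)=i$ for $i\in\{1,2,3,4\}$, which is legitimate since the four edges at $v$ are pairwise incident and hence receive four distinct colors under a proper coloring. The goal is then to pin down $c(wu_1),\dots,c(wu_4)$. Each $c(wu_j)$ is distinct from $c(vu_j)=j$ (since $wu_j$ and $vu_j$ share the vertex $u_j$), and the four edges at $w$ are likewise pairwise incident, so $c(wu_1),\dots,c(wu_4)$ are four distinct colors as well.

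The key step is to show that the coloring at $w$ uses \emph{only} the colors $\{1,2,3,4\}$, i.e.\ no fifth color appears. To do this I would apply Lemma~\ref{l5} to the medium-pair formed by the three cherries $vu_1w, vu_2w, vu_3w$. That lemma gives two possibilities: a four-coloring scheme, which would introduce a new color, or a three-coloring scheme using only $\{1,2,3\}$. I would argue that the four-coloring scheme is incompatible with the presence of the \emph{fourth} cherry $vu_4w$. Concretely, suppose a medium-pair, say $\{vu_1w,vu_2w,vu_3w\}$, uses the four-coloring scheme, so that some edge at $w$ — say $c(wu_3)$ — carries a new color $5$. Then the second part of Lemma~\ref{l5} (the three-coloring criterion) fails, and the proof of that lemma forces $c(wu_1)$ and $c(wu_2)$ to be the ``swapped'' colors $2$ and $1$. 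Now I bring in $u_4$: considering the rainbow-$P_5$ candidates through $u_4$ and $w$, such as $u_3wu_4vu_1$ or $u_4wu_3\cdots$, the fifth color on $wu_3$ together with the distinct colors $4,1,2$ at $v$ produces a rainbow $P_5$, contradicting the hypothesis. Thus every medium-pair inside the heavy-pair must use the three-coloring scheme, and in particular only the colors $\{1,2,3,4\}$ appear on the edges at $w$.

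Once every $c(wu_j)\in\{1,2,3,4\}$, the four values $c(wu_1),\dots,c(wu_4)$ form a permutation $\sigma$ of $\{1,2,3,4\}$ with the restriction $\sigma(j)\neq j$ (a derangement, in fact). The remaining task is to rule out all derangements except the product of two transpositions $(1\,2)(3\,4)$, i.e.\ to force $c(wu_1)=2, c(wu_2)=1, c(wu_3)=4, c(wu_4)=3$ up to relabeling. I would do this by testing each offending pattern against a $P_5$ of the form $u_a w u_b v u_c$: whenever $\sigma$ is a $4$-cycle or contains a $3$-cycle, one can choose indices $a,b,c$ so that the four colors $\sigma(a)=c(wu_a)$, $\sigma(b)=c(wu_b)$, $c(vu_b)=b$, $c(vu_c)=c$ are all distinct, yielding a rainbow $P_5$. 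The three-coloring constraint from Lemma~\ref{l5}, applied to each of the four triples of cherries simultaneously, already encodes most of this, so I expect the permutation to be cut down to the desired double transposition almost immediately after the no-fifth-color step.

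The main obstacle will be the bookkeeping in the second paragraph: making the reduction from the four-coloring scheme to a contradiction clean, since one must carefully exhibit a specific rainbow $P_5$ using the fourth cherry and verify its four edges genuinely receive four different colors. A subtlety is that Lemma~\ref{l5} is stated for a medium-pair in isolation, so I must be explicit that the \emph{extra} vertex $u_4$ (with its two edges to $v$ and $w$) is what breaks the four-coloring option — this is exactly the leverage that a heavy-pair has over a mere medium-pair. Once the colors at $w$ are confined to $\{1,2,3,4\}$, the derangement analysis is a short finite check and should present no real difficulty.
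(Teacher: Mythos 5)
There is a genuine gap in your key step, namely a false dichotomy. The four-coloring scheme of Lemma~\ref{l5} only asserts that one edge at $w$ receives a color outside the \emph{three} colors of that medium-pair; inside the heavy-pair, with $c(vu_i)=i$, that extra color can perfectly well be $4=c(vu_4)$ rather than a brand-new fifth color, so ``four-coloring scheme'' does not imply ``introduces a color $5$''. Your concrete contradiction is sound as far as it goes: if $c(wu_3)=5$ then the proof of Lemma~\ref{l5} forces the swap $c(wu_1)=2$, $c(wu_2)=1$, and the path $u_3wu_4vu_1$ has colors $5$, $c(wu_4)$, $4$, $1$, where properness forces $c(wu_4)\notin\{5,2,1,4\}$, so the path is rainbow. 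But this only rules out an extra color \emph{outside} $\{1,2,3,4\}$; it does not rule out the four-coloring scheme itself. Consequently your conclusion that ``every medium-pair inside the heavy-pair must use the three-coloring scheme'' is false --- indeed impossible: applying scheme (2) of Lemma~\ref{l5} to the triples $\{u_1,u_2,u_3\}$ and $\{u_1,u_2,u_4\}$ would force $\{c(wu_1),c(wu_2),c(wu_3)\}=\{1,2,3\}$ and $\{c(wu_1),c(wu_2),c(wu_4)\}=\{1,2,4\}$, hence $\{c(wu_1),c(wu_2)\}=\{1,2\}$ and $c(wu_3)=3$, violating properness at $u_3$. In the coloring that Lemma~\ref{cv1} actually exhibits, all four medium sub-pairs use the \emph{four}-coloring scheme, with extra color equal to the $v$-color of the omitted cherry. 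So the later sentence claiming the three-coloring constraints on the four triples ``already encode most of'' the derangement reduction cannot stand; taken literally, your argument would show that no proper rainbow-$P_5$-free coloring of a heavy-pair exists at all, contradicting the lemma's own conclusion.

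The repair is straightforward but changes the logic: the correct inference from your rainbow-path computation is that the extra color of each medium sub-pair lies in $\{1,2,3,4\}$, after which $\sigma(j)=c(wu_j)$ is a derangement of $\{1,2,3,4\}$ (your parenthetical about $3$-cycles is vacuous, since derangements of a $4$-set contain none), and the six $4$-cycles must be killed by explicit checks on paths $u_awu_bvu_c$ --- e.g.\ for $\sigma=(1\,2\,3\,4)$ the path $u_2wu_1vu_4$ has colors $3,2,1,4$ and is rainbow --- leaving only the three double transpositions, all equivalent to the stated one after relabeling. This repaired route works but is heavier than the paper's proof, which never invokes Lemma~\ref{l5}: there, a fifth color is excluded directly (the path $u_1wu_2vu_3$ forces $c(wu_2)=3$, and then $u_1wu_2vu_4$ is rainbow with colors $5,3,2,4$), and the permutation is then pinned down in two lines by normalizing $c(wu_1)=c(vu_2)=2$ and chasing the paths $u_2wu_1vu_3$ and $u_2wu_1vu_4$. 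Note also that the swap you need ($c(wu_1)=2$, $c(wu_2)=1$) comes from the \emph{proof} of Lemma~\ref{l5}, not its statement, so you would have to re-derive it in any case.
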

\begin{proof}
Without loss of generality, we can assume $c(vu_i)=i$ for any $i\in\{1,2,3,4\}$. It is easy to show for each $i\in\{1,2,3,4\}$, we have $c(wu_i)\in\{1,2,3,4\}$. Indeed, suppose not and without loss of generality, assume $c(wu_1)=5$. Then considering the path $u_1wu_2vu_3$, we get $c(wu_2)=3$. But this results in a contradiction as we get rainbow-$P_5$ path $u_1wu_2vu_4$, whose edges are of colors 5, 3, 2, and 4, in this order.

Without loss of generality, we can assume $c(wu_1)=c(vu_2)=2$. Then considering the paths $u_2wu_1vu_3$ and $u_2wu_1vu_4$, we get $c(u_2w)=1$. Consequently, $c(u_3w)=4$ and $c(u_4w)=3$ hold, and this completes the proof Lemma~\ref{cv1}.
\end{proof}

\section{Breaking the problem}\label{er}
From now on, by a graph $G$ we mean an $n$-vertex planar graph with a proper edge-coloring $c$ which avoids a rainbow-$P_5$. The following are important observations for the proof. 

If $G$ contains two adjacent degree-2 vertices, then we can finish the proof by induction after deleting these two vertices. Moreover, if $G$ contains a vertex of degree at most 1, then again we can finish the proof by induction after deleting this vertex. Even more, we can also finish the proof by induction if we find a $k$-vertex component $H$ in $G$ such that $e(H)\leq {3k}/{2}$. We call such a component a \emph{nice-component}.

\begin{definition}
Let $H$ be a subgraph of $G$. A vertex $v$ in $H$ is called \emph{saturated} if $d_{H}(v)=d_{G}(v)$. 
\end{definition}

\begin{lemma}\label{c3}
Let $(u_1u_2u,uu_3u_4)$-bow tie be in $G$. Then $u_1, u_2, u_3$ and $u_4$ are saturated vertices. i.e., $d_G(u_i)=2$, $i\in\{1,2,3,4\}.$
\end{lemma}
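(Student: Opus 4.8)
The plan is to argue by contradiction: I assume some $u_i$ has a neighbor outside the bow tie (equivalently $d_G(u_i)\ge 3$) and exhibit a rainbow-$P_5$. First I would normalize the coloring via Lemma~\ref{c2}, assuming $c(uu_i)=i$ for $i\in\{1,2,3,4\}$ and $c(u_1u_2)=c(u_3u_4)=5$ for some color $5\notin\{1,2,3,4\}$. The bow tie has an obvious symmetry group (swap $u_1\leftrightarrow u_2$, swap $u_3\leftrightarrow u_4$, and exchange the two triangles), and each of these respects the coloring scheme, so it suffices to prove $d_G(u_1)=2$.

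Suppose then that $u_1$ has a third neighbor $x\notin\{u,u_2\}$. Since $u_1x$ is incident to $u_1u$ (color $1$) and to $u_1u_2$ (color $5$), properness gives $c(u_1x)\notin\{1,5\}$. I split on whether $x$ lies in the opposite triangle. In the main case $x\notin\{u_3,u_4\}$, the five vertices $x,u_1,u,u_3,u_4$ are distinct, so both $xu_1uu_3u_4$ and $xu_1uu_4u_3$ are genuine $5$-paths. Rainbow-freeness of the first forces $c(u_1x)\in\{1,3,5\}$, hence $c(u_1x)=3$, while the second forces $c(u_1x)\in\{1,4,5\}$, hence $c(u_1x)=4$, a contradiction.

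It remains to handle the degenerate case $x\in\{u_3,u_4\}$, say $x=u_3$ by symmetry. Here $c(u_1u_3)\notin\{1,3,5\}$ (it is incident to edges of colors $1,5$ at $u_1$ and colors $3,5$ at $u_3$). Examining $u_2u_1u_3uu_4$, whose colors are $5,c(u_1u_3),3,4$, shows this path is rainbow unless $c(u_1u_3)=4$; but if $c(u_1u_3)=4$, then $u_4u_3u_1uu_2$ has colors $5,4,1,2$ and is rainbow. Either way we reach a contradiction, so $d_G(u_1)=2$, and the symmetry of the bow tie yields $d_G(u_i)=2$ for every $i$.

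I expect the only genuine obstacle to be the degenerate case $x\in\{u_3,u_4\}$: the clean two-path argument of the main case collapses when the extra neighbor is itself a vertex of the far triangle, so those configurations have to be ruled out by separate, somewhat ad hoc paths. Everything else is a routine check that the paths chosen are actual (vertex-distinct) copies of $P_5$ and that their color lists are forced to repeat.
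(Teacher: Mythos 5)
Your proposal is correct and follows essentially the same route as the paper's proof: both normalize the coloring via Lemma~\ref{c2}, reduce by symmetry to a single vertex of the bow tie, and then rule out first a chord to the opposite triangle and then an external neighbor by short forced-color path arguments (e.g.\ your paths $xu_1uu_3u_4$ and $xu_1uu_4u_3$ are the exact analogues of the paper's $wu_2uu_3u_4$ and $wu_2uu_4u_3$, and your degenerate-case pair $u_2u_1u_3uu_4$, $u_4u_3u_1uu_2$ mirrors the paper's $u_4uu_3u_2u_1$, $u_1uu_2u_3u_4$). The only differences are cosmetic: you treat the two cases in the opposite order and phrase the external-neighbor contradiction as two conflicting forced colors rather than one forced color plus an explicit rainbow path.
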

\begin{proof} 
Denote $c(uu_i)=i$, where $i\in\{1,2,3,4\}$. From the bow tie coloring scheme, which is stated in Lemma~\ref{c2}, we may assume that $c(u_1u_2)=c(u_3u_4)=5$. Without loss of generality we verify that $u_2$ is a saturated vertex.

First we show that $u_2u_3\notin E(G)$, and with similar argument $u_2u_4\notin E(G)$. Indeed, suppose for contradiction $u_2u_3\in E(G).$ From the known colors of the $P_5$ path $u_4uu_3u_2u_1$, $c(u_3u_2)=4$. But this results a rainbow-$P_5$ path, namely $u_1uu_2u_3u_4$, with edge-colors from left to right $1,\ 2,\ 4$ and $5$ respectively.

Next suppose there is a vertex $w\in V(G)\backslash\{u,u_1,u_3,u_4\}$ such that $wu_2\in E(G)$. From the $P_5$ path $wu_2uu_3u_4$, $c(wu_2)\in\{3,5\}$. However, it can not be $5$, as $c(u_1u_2)=5$ and $u_1u_2$ is incident to the edge $wu_2$. This implies $c(wu_1)=3$, but this results a rainbow-$P_5$ $wu_2uu_4u_3$ with edge colors from left to right $3,\ 2,\ 4$ and $5$ respectively. This completes the proof of Lemma~\ref{c3}.
\end{proof}
From Lemma~\ref{c3} if $G$ contains a bow tie, then its degree-$2$ vertices are saturated and hence we have two adjacent degree-$2$ vertices to finish the proof by induction.  
\begin{lemma}\label{lm5}
If $G$ contains four $3$-cycles sharing an edge, then each vertex of the cycles is saturated.
\end{lemma}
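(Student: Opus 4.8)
The plan is to recognize the hypothesis as a heavy-pair carrying one extra edge, pin down the entire coloring, and then forbid any further incidence by producing a rainbow-$P_5$. If four $3$-cycles share an edge, that edge has some endpoints $v,w$, and the four triangles are $vu_iw$ for distinct vertices $u_1,u_2,u_3,u_4$, each joined to both $v$ and $w$. Thus $G$ contains a heavy-pair on $\{v,w,u_1,u_2,u_3,u_4\}$ together with the edge $vw$. First I would apply Lemma~\ref{cv1} to normalize the coloring: $c(vu_i)=i$ for $i\in\{1,2,3,4\}$, and $c(wu_1)=2$, $c(wu_2)=1$, $c(wu_3)=4$, $c(wu_4)=3$. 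At $v$ the four edges to the $u_i$ already use colors $1,2,3,4$, and the same holds at $w$, so properness forces $c(vw)\notin\{1,2,3,4\}$; I write $c(vw)=5$.

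Second, I would dispatch the two hubs. If $v$ had a neighbor $y\notin\{u_1,u_2,u_3,u_4,w\}$, then $c(vy)\notin\{1,2,3,4,5\}$, since those five colors already appear at $v$; but then $y\,v\,u_1\,w\,u_3$ is a path on five distinct vertices with the four distinct edge-colors $c(vy),1,2,4$, a rainbow-$P_5$. Hence $v$ is saturated, and by the same argument with the roles of $v$ and $w$ interchanged, so is $w$.

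Third, and this is the delicate step, I would show each $u_i$ is saturated. Consider $u_1$ and suppose it has a neighbor $z\notin\{v,w\}$; set $\alpha:=c(u_1z)\notin\{1,2\}$. Since $z$ equals at most one of $u_3,u_4$, I may pick $j\in\{3,4\}$ with $u_j\neq z$ and use the path $z\,u_1\,v\,u_j\,w$, whose edge-colors are $\alpha,1$ and the pair $\{3,4\}$; avoiding a rainbow-$P_5$ forces $\alpha\in\{3,4\}$. If $z\in\{u_3,u_4\}$, then $\alpha$ is the color of a chord incident to a vertex whose two edges already carry colors $3$ and $4$, so $\alpha\notin\{3,4\}$, a contradiction. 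Otherwise $z\notin\{u_3,u_4\}$, and now both $z\,u_1\,v\,w\,u_3$ and $z\,u_1\,v\,w\,u_4$ are genuine paths; their color sequences $\alpha,1,5,4$ and $\alpha,1,5,3$, combined with $\alpha\in\{3,4\}$, force $\alpha=4$ and $\alpha=3$ simultaneously, a contradiction. The normalized coloring is symmetric under exchanging the color-pairs $\{1,2\}\leftrightarrow\{3,4\}$ together with $\{u_1,u_2\}\leftrightarrow\{u_3,u_4\}$, so the identical argument applies to $u_2,u_3,u_4$. Therefore none of the six vertices has a neighbor outside the configuration, i.e.\ all of them are saturated.

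The main obstacle is purely the bookkeeping of keeping every exhibited $P_5$ a simple path: the potential coincidence $z\in\{u_3,u_4\}$ is exactly what blocks the two paths through the edge $vw$, and it must be handled separately via the single path $z\,u_1\,v\,u_j\,w$ together with the color restriction at $u_3$ and $u_4$. Once the coloring has been pinned down by Lemma~\ref{cv1} and $c(vw)$ identified as a fifth color, every remaining case collapses to a one-line check that some four-edge path is rainbow.
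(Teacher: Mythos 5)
Your proof is correct --- every exhibited $P_5$ is simple, the normalization via Lemma~\ref{cv1} is applied properly, and the two-stage argument for the $u_i$ (first pinning $c(u_1z)\in\{3,4\}$ via the path $z\,u_1\,v\,u_j\,w$, then splitting on whether $z\in\{u_3,u_4\}$) covers all cases, including the chord $z=u_2$, which correctly falls under your generic two-path case through the edge $vw$. The skeleton matches the paper's proof (heavy-pair scheme from Lemma~\ref{cv1}, then rainbow-$P_5$ chasing), but you organize the case analysis differently in two respects. First, the paper excludes chords among $\{u_1,u_2,u_3,u_4\}$ by a structural reduction: a chord such as $u_1u_2$ creates a bow tie $(u_1u_2u,\,uwu_3)$, and Lemma~\ref{c3} then forces $w$ to have degree $2$, a contradiction; you instead fold chords into the same coloring argument that handles external neighbors, so your proof is self-contained after Lemma~\ref{cv1} and never invokes the bow tie machinery. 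Second, for the hubs the paper intersects color constraints from three paths, whereas you observe that all five colors $1,\dots,5$ already appear at each hub, so any further edge carries a sixth color and a single path (e.g.\ $y\,v\,u_1\,w\,u_3$ with colors $c(vy),1,2,4$) is immediately rainbow --- a genuine economy, since properness alone already rules out the colors the paper's three paths are used to eliminate. What the paper's route buys is reuse of an already-proved lemma and shorter local verifications; what yours buys is independence from Lemma~\ref{c3} and a uniform treatment of chords and external vertices, at the cost of the slightly delicate bookkeeping you flag (choosing $j$ with $u_j\neq z$ so that the pinning path stays simple).
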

\begin{proof}
Let the four $3$-cycles be $uwu_iw$, where $i\in\{1,2,3,4\}$. Clearly $G$ contains a heavy-pair $uu_iw$, where $i\in\{1,2,3,4\}$. 

From the heavy-pair coloring scheme given in Lemma~\ref{cv1}, assume $c(uu_1)=c(wu_2)=1$, $c(uu_2)=c(wu_1)=2$, $c(uu_3)=c(wu_4)=3$, $c(uu_4)=c(wu_3)=4$ and $c(uw)=5$. Denote $S=\{u,w,u_1,u_2,u_3,u_4\}$.

Observe that no two vertices in $\{u_1,u_2,u_3,u_4\}$ are adjacent. Indeed, suppose $u_1u_2\in E(G)$. Clearly $(u_1u_2u,uwu_3)$-bow tie is a subgraph in $G$. From Lemma~\ref{c3}, $u_1, u_2, u_3$ and $w$ are saturated vertices in $G$, which is not true and hence a contradiction. 

Let $x\in V(G)\backslash S$. It can be shown that $x$ is not adjacent to any vertex in $S$. To see this, 
suppose $xu\in E(G)$. From the $P_5$ paths $xuu_4wu_1,\ xuu_4wu_2$ and $xuu_2wu_3$, $c(xu)$ is respectively in $\{2,3\}$, $\{1,3\}$ and $\{1,4\}$, which is a contradiction. For a similar argument $xw\notin E(G)$.

Suppose $xu_1\in E(G).$ From the $P_5$ paths $xu_1wuu_3, xu_1wuu_4$ and $xu_1wu_3u$ $c(xu_1)$ is respectively in $\{3,5\}$, $\{4,5\}$ and $\{3,4\}$, which is again a contradiction. For similar reasons $xu_2, \ xu_3,\ xu_4\notin E(G)$. This completes the proof of Lemma~\ref{lm5}.
\end{proof}
From Lemma~\ref{lm5} observe that if $G$ contains four $3$-cycles sharing an edge and $S$ is the set of vertices of the cycles, then $G[S]$ is a $6$-vertex and $9$-edge nice-component, and we can finish the proof by induction. 
\begin{lemma}\label{tr}
If $G$ contains three $3$-cycles sharing an edge, then $G$ contains either a nice-component or a degree-$1$ vertex.
\end{lemma}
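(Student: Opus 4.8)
Looking at Lemma~\ref{tr}, I need to prove that if $G$ contains three $3$-cycles sharing an edge, then $G$ contains either a nice-component or a degree-$1$ vertex.

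Let me set up the situation. Three $3$-cycles sharing an edge $uw$ means I have vertices $u_1, u_2, u_3$ each forming a triangle with $u$ and $w$. This is exactly a medium-pair (a $K_{2,3}$) with the additional edge $uw$ present. Let me think about the coloring constraints and what structures force degree-$2$ vertices.

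\textbf{The plan.} First I would fix notation: let the three triangles be $uu_iw$ for $i \in \{1,2,3\}$, so the underlying medium-pair on $\{u,w,u_1,u_2,u_3\}$ together with the central edge $uw$ sits inside $G$. The central edge $uw$ is incident to all six cherry edges, which is a strong constraint on the coloring. I would first apply Lemma~\ref{l5} to classify the coloring of the medium-pair into the three-coloring scheme or the four-coloring scheme, and then use the presence of the edge $uw$ to prune these further. Since $c(uw)$ must differ from all of $c(uu_i)$ and $c(wu_i)$, and the schemes in Lemma~\ref{l5} use at most four colors among the six cherry edges, I expect the edge $uw$ to force a rigid color structure on the whole configuration $S = \{u,w,u_1,u_2,u_3\}$.

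\textbf{Key steps.} The main thrust is to show that the vertices $u_1, u_2, u_3$ are saturated (all of degree $2$), so that $G[S]$ becomes a $5$-vertex component. First I would rule out edges among $u_1,u_2,u_3$: any such edge, say $u_1u_2$, would create a $(u_1u_2u, uwu_3)$-bow tie (using the central edge $uw$ and the triangle through $u_3$), whence Lemma~\ref{c3} forces the cycle vertices to be saturated and produces adjacent degree-$2$ vertices — but I must instead convert this into the desired conclusion, so I would note that this yields a nice-component directly. Next, for each $u_i$ I would show no vertex $x \in V(G)\setminus S$ is adjacent to $u_i$: assuming $xu_i \in E(G)$, I consider the various $P_5$'s of the form $xu_iw\cdots$ and $xu_iu\cdots$ running through the rigidly-colored configuration, and derive contradictory color constraints on $c(xu_i)$, exactly as in the proofs of Lemmas~\ref{ws} and~\ref{lm5}. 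Once all three $u_i$ are saturated with $d_G(u_i)=2$, the set $S$ induces a component with $5$ vertices and $7$ edges (the six cherry edges plus $uw$), and since $7 \le \lfloor 3\cdot 5/2\rfloor$, this is a nice-component. The alternative branch is that some $u_i$ fails to be saturated only via the above bow-tie case, which already gave a nice-component.

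\textbf{Main obstacle.} The delicate part is the case analysis on the coloring: unlike the heavy-pair (Lemma~\ref{cv1}), which is rigidly four-colored, the medium-pair admits both a three-coloring and a four-coloring scheme, and I must handle each while incorporating the extra edge $uw$. In the three-coloring scheme the edge $uw$ needs a genuinely new fourth color, and I expect the resulting $P_5$'s through $uw$ and two cherries to rainbow out quickly, pinning down $c(uw)$ or forcing a contradiction; in the four-coloring scheme I would check that the two ``doubled'' colors leave a consistent choice for $c(uw)$ only under a specific pattern. The bookkeeping of which $P_5$ paths to test for each potential external neighbor $x$ — ensuring I cover both $xu_i u\cdots$ and $xu_i w\cdots$ extensions — is where errors would creep in, so I would organize it by first recording the full color pattern on $S$ and then systematically listing the forbidden colors for $c(xu_i)$ from each path, concluding that no valid color remains. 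Thus every external edge at a $u_i$ is impossible, $G[S]$ is a nice-component, and the lemma follows.
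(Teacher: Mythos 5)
Your overall frame (classify the medium-pair coloring via Lemma~\ref{l5}, then prune using the extra edge $uw$) matches the paper's opening move, but your central claim --- that the forbidden-color bookkeeping kills \emph{every} external edge at $u_1,u_2,u_3$, so that all three are saturated and $G[S]$ is always a $5$-vertex, $7$-edge nice-component --- is false, and this is a genuine gap. In the three-coloring scheme the color analysis does \emph{not} exhaust all colors: with the paper's normalization $c(uw)=1$, $c(uu_1)=2$, $c(uu_2)=3$, $c(uu_3)=4$, $c(wu_3)=2$, $c(wu_2)=4$, $c(wu_1)=3$, an external edge $xu_3$ is perfectly consistent provided $c(xu_3)=3$; no list of $P_5$'s through $S$ rules it out. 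What the paper shows instead is that such an $x$ must be a leaf (first excluding $xu_1,xu_2\in E(G)$ via the fish-coloring Lemma~\ref{c4}, then deriving contradictory constraints on $c(yx)$ for any further neighbor $y$ of $x$), which is precisely why the lemma's conclusion carries the alternative ``or a degree-$1$ vertex'' --- an alternative your argument never produces. Symmetrically, in the four-coloring scheme it is $u_1,u_2,u_3$ that are saturated while $u$ and $w$ need not be: an external neighbor $x$ of $u$ with $c(xu)=5$ is color-consistent, and one must then argue either $d(x)=1$, or $xw\in E(G)$, in which case four $3$-cycles share the edge $uw$ and Lemma~\ref{lm5} supplies a $6$-vertex, $9$-edge nice-component. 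Your proposal ignores external neighbors of $u$ and $w$ entirely, so even in the branch where your saturation claim for the $u_i$ holds, you cannot conclude that $G[S]$ is a component.

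A smaller slip: when $u_1u_2\in E(G)$ you say the resulting $(u_1u_2u,uwu_3)$-bow tie ``yields a nice-component directly,'' but Lemma~\ref{c3} would make $w$ a saturated degree-$2$ vertex of that bow tie, while $w$ has degree at least $4$ in the configuration; so the edge $u_1u_2$ is simply impossible (a contradiction), not a source of a nice-component. To repair the proof you need the paper's asymmetric case split: in the four-coloring scheme show the $u_i$ are saturated and then analyze external neighbors of $u,w$ (leaf, or four triangles handled by Lemma~\ref{lm5}); in the three-coloring scheme invoke the Case~1 argument of Lemma~\ref{ws} to saturate $u$ and $w$, and show any external neighbor of a $u_i$ is a degree-$1$ vertex rather than nonexistent.
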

\begin{proof}
Let the three $3$-cycles be $uwu_iu$, where $i\in\{1,2,3\}$. The cherries $uu_iw$, where $i\in\{1,2,3\}$, form a medium-pair in $G$. Denote, $S=\{u,w,u_1,u_2,u_3\}$. We distinguish two cases considering the medium-pair coloring scheme stated in Lemma~\ref{l5}.

\medskip

\noindent \textit{Case 1: The medium-pair is colored with the four-coloring scheme.}

\smallskip

Without loss of generality assume $c(uw)=1$, $c(uu_1)=c(wu_2)=2$, $c(uu_2)=c(wu_1)=3$, $c(uu_3)=4$ and $c(wu_3)=5$. Let $x\in V(G)\backslash S$.  

It can be checked that each vertex in $\{u_1,u_2,u_3\}$ is a saturated vertex. For completeness, we show for $u_1$, but similar arguments can be given to show for the rest. It is easy to check that both $u_1u_2, u_1u_3\notin E(G)$. Suppose $xu_1\in E(G).$ From the $P_5$ paths  $xu_1wuu_3$, $xu_1uwu_3$ and $xu_1uu_3w$,  $c(xu_1)$ is in $\{1,4\}$, $\{1,5\}$ and $\{4,5\}$ respectively, which is a contradiction.

If both $u$ and $w$ are saturated vertices, then we are done as $G[S]$ is a $5$-vertex and $7$-edge nice-component and we are done.

Without loss of generality assume $xu\in E(G)$. If $xw\notin E(G)$, it can be checked that $d(x)=1$ and we are done. Indeed, from the $P_5$ paths $xuu_3wu_2$ and $xuu_3wu_1$, we get $c(xu)=5$. Suppose for contradiction $y\in N(x)\backslash\{u\}$. Observe that from the $P_5$ paths $yxuu_2w$, $yxuwu_1$ and $yxuwu_2$, $c(yx)$ is respectively in $\{2,3\}$, $\{1,3\}$ and $\{1,2\}$, which is a contradiction. 

On the other hand, if $xw\in E(G)$, then $uwu_1u,uwu_2u,uwu_3u$ and $uwxu$ are four $3$-cycles sharing an edge. In this case we are done by Lemma~\ref{lm5}, as $G[S\cup \{x\}]$ is a $6$-vertex and $9$-edge nice-component.  

\medskip

\noindent \textit{Case 2: The medium-pair is colored with three-coloring scheme.}

\smallskip

Without loss of generality, let $c(uw)=1$, $c(uu_1)=2$, $c(uu_2)=3,\ c(uu_3)=4,\ c(wu_3)=2,\ c(wu_2)=4$ and $c(wu_1)=3$. 

From the proof of Case 1 of Lemma~\ref{ws}, it is easy to see that both $u$ and $w$ are saturated vertices. Moreover, no two vertices in $\{u_1,u_2,u_3\}$ are adjacent. If each vertex in $\{u_1,u_2,u_3\}$ is a saturated, then we are done as $G[S]$ is a $5$-vertex and $7$-edge nice-component. 

Next we show that if there is $x\in V(G)\backslash S$ such that $xu_i\in E(G)$, where $i\in\{1,2,3\}$, then $d(x)=1$ and we are done. To see this, without loss of generality suppose $xu_3\in E(G)$. From the $P_5$ paths $xu_3wu_2u$ and $xu_3wuu_2$, $c(xu_3)\in\{3,4\}$ and $c(xu_3)\in\{1,3\}$ respectively. Thus, $c(xu_3)=3$.

It can be checked that $xu_2, xu_1\notin E(G)$. Indeed, suppose $xu_2\in E(G).$  In this case we get $(u_1uw,wu_2xu_3)$-fish, where edges of the $4$-cycle are colored with at least three colors, namely $2,\ 3$ and $4$. But this is a contradiction to the fish graph coloring scheme stated in Lemma~\ref{c4}. Now suppose $y\in N(x)\backslash \{u_3\}$. From the $P_5$ paths $yxu_3wu_2$, $yxu_3wu$ and $yxu_3uw$, $c(yx)$ is in $\{2,4\}$, $\{1,2\}$ and $\{1,4\}$ respectively, which is again a contradiction. This completes the proof of Lemma~\ref{tr}.  
\end{proof}

\begin{lemma}\label{qw}
Let $u$ and $w$ be two non adjacent vertices in $G$ such that $N(u)\cap N(w)\neq\emptyset$ and $d(u)=4$. If $d(w)=3$, then $N(w)\subset N(u)$. 
\end{lemma}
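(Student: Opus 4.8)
The plan is to argue by contradiction: assume $N(w)\not\subset N(u)$ and extract a forbidden rainbow-$P_5$. First I would normalize the coloring around $u$ by writing $N(u)=\{a_1,a_2,a_3,a_4\}$ with $c(ua_i)=i$, choosing $a_1$ to be a common neighbor of $u$ and $w$, which exists since $N(u)\cap N(w)\ne\emptyset$. The three neighbors of $w$ are then $a_1$ together with two further vertices, and the goal becomes showing that both of those vertices also lie in $N(u)$.

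The first key step is a coloring constraint on any neighbor of $w$ lying outside $N(u)$. For such a vertex $y$ (so $y\ne a_j$ for every $j$, since $y\notin N(u)$ while $a_j\in N(u)$), and for each $j\in\{2,3,4\}$, the sequence $a_ju a_1 w y$ is a genuine $P_5$ with edge colors $j,\,1,\,c(a_1w),\,c(wy)$. Properness at $a_1$ and at $w$ rules out $1=c(a_1w)$ and $c(a_1w)=c(wy)$, so to avoid a rainbow we need, for every $j\in\{2,3,4\}$, that $j=c(a_1w)$, $j=c(wy)$, or $c(wy)=1$. Since the three indices $2,3,4$ cannot all be absorbed by the two colors $c(a_1w),c(wy)$, we are forced into $c(wy)=1$. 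Because a proper coloring uses color $1$ at most once at $w$, this shows that \emph{at most one} neighbor of $w$ can lie outside $N(u)$.

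It then remains only to exclude the case of exactly one outside neighbor. So I would suppose $N(w)=\{a_1,a_k,y\}$ with $a_k\in N(u)$ for some $k\in\{2,3,4\}$, $y\notin N(u)$, and $c(wy)=1$ from the previous step. Setting $\gamma=c(wa_k)$, properness gives $\gamma\ne 1$ (at $w$) and $\gamma\ne k$ (at $a_k$). Now look at the paths $y w a_k u a_j$ for $j\in\{2,3,4\}\setminus\{k\}$; each is a genuine $P_5$ with colors $1,\gamma,k,j$, and the only coincidence not already forbidden by properness and by $j\ne k$ is $\gamma=j$. Avoiding a rainbow therefore forces $\gamma=j$ simultaneously for the two distinct indices in $\{2,3,4\}\setminus\{k\}$, which is impossible. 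This contradiction completes the argument, yielding $N(w)\subset N(u)$.

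The path-by-path verification is entirely routine; the only point requiring care is the pigeonhole counting underlying both steps, namely that the index set $\{2,3,4\}$ genuinely outnumbers the available repeat-colors, so that some $P_5$ is unavoidably rainbow. I expect this counting, rather than any individual path inspection, to be the real crux of the proof.
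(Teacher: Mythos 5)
Your proof is correct, and it is essentially the paper's argument in a cleaner organization: like the paper, you normalize the colors at $u$, use the $P_5$'s through the common neighbor to force the edge from $w$ to any neighbor outside $N(u)$ to carry the color of the $u$--(common neighbor) edge (the paper's $c(ww_1)=1$ step), and then extract a rainbow-$P_5$ to finish. The paper merges your two steps into one case split (on whether the remaining neighbor $w_2$ of $w$ coincides with a neighbor of $u$), but the forced colorings and the pigeonhole over the index set $\{2,3,4\}$ are the same mechanism in both proofs.
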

\begin{proof}
Let $v\in N(u)\cap N(w)$ and suppose for contradiction, $w_1\in N(w)\backslash N(u)$. This implies we have two vertices $u_1$ and $u_2$ in $N(u)\backslash \{v\}$ but not in $N(w)$. Let $u_3$ and $w_2$ be the remaining vertices in $N(u)$ and $N(w)$ respectively. Notice that the vertices could be identical. 

Let $c(uv)=1$ and $c(vw)=2$. For a clear reason, one of the edges in $\{uu_1,uu_2\}$ receives a color not in $\{1,2\}$. Without loss of generality, assume that $c(uu_1)=3$. Moreover, from the degree condition of $u$,  $c(uu^*)=4$, which is a brand new color and $u^*\in\{u_2,u_3\}$. From the $P_5$ paths $w_1wvuu_1$, and $w_1wvuu^*$, we have $c(ww_1)=1$, and as a result $c(ww_2)=3.$ 

If $u_3$ and $w_2$ are distinct vertices, we have a rainbow-$P_5$ path $w_2wvuu^*$, where the edges color from left to right are respectively $3,\ 2,\ 1$ and $4$. 

On the other hand if $u_3$ and $w_2$ are identical vertices, both $c(uw_2)$ and $c(uu_2)$ can not receive $1$ and $3$, as $uu_1$ and $uv$ already received these colors and are incident to the edges $uw_2$ and $uu_2$. Therefore, we get a rainbow-$P_5$ path $w_1ww_2uu_2$, which is a contradiction.  This completes the proof of Lemma~\ref{qw}.
\end{proof}
\begin{remark}\label{rf}
\emph{\begin{enumerate}
\item [(i)] It can be checked that if $d(w)=4$ in the assumptions of Lemma~\ref{qw}, then $N(w)=N(u)$.
\item[(ii)] Observe that $G$ does not contain (as a subgraph) two non adjacent vertices $u$ and $w$ such that $N(u)\cap N(w)\neq\emptyset$, and $d(u)\geq 5$ and $d(w)\geq 3$. Indeed, suppose such a subgraph exists. Let $\{w_1,w_2,w_3\}\subseteq N(w)$ and $w_1\in N(u)$. From the proof of Lemma~\ref{qw}, $w_2, w_3\in N(u)$ and $uw_1w,\ uw_2w$ and $uw_3w$ form a medium-pair. Moreover, form the degree condition of $u$, $u$ is adjacent to at least two vertices not in $\{w_1,w_2,w_3,w\}$.  From Lemma~\ref{ws}, $G$ contains a rainbow-$P_5$ which is a contradiction.
\end{enumerate}}
\end{remark}
\begin{lemma}\label{tn1}
If $G$ contains two adjacent vertices $u$ and $w$ such that $d(u)\geq 5$ and $d(w)\geq 4$, then $G$ contains either a degree-$1$ vertex or two adjacent vertices of degree-$2$ or a nice-component. 
\end{lemma}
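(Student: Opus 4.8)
The plan is to analyze the local structure around the two high-degree adjacent vertices $u$ and $w$ by leveraging the earlier coloring lemmas, and to show that the constraints they impose force one of the desired ``good'' structures to appear. Let me set up notation: since $d(u)\geq 5$ and $d(w)\geq 4$ with $uw\in E(G)$, I would first consider the common neighborhood $N(u)\cap N(w)$. By part~(ii) of Remark~\ref{rf}, $G$ cannot contain two \emph{nonadjacent} vertices of degrees $\geq 5$ and $\geq 3$ sharing a neighbor; but here $u$ and $w$ are adjacent, so that remark does not directly apply and I must argue afresh. The key is that any common neighbor $v\in N(u)\cap N(w)$ creates a triangle $uwv$, and multiple common neighbors create several triangles sharing the edge $uw$.

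The first main step is to bound the number of common neighbors. If $u$ and $w$ have three common neighbors $u_1,u_2,u_3$, then $uwu_iu$ (for $i\in\{1,2,3\}$) are three $3$-cycles sharing the edge $uw$, and Lemma~\ref{tr} immediately produces either a nice-component or a degree-$1$ vertex, finishing that case. So I may assume $u$ and $w$ share at most two common neighbors. The second step handles the remaining neighbors: since $d(u)\geq 5$, the vertex $u$ has at least $5-1-2 = 2$ neighbors outside $N[w]$ (subtracting $w$ itself and at most two common neighbors), and similarly $w$ retains private neighbors. I would then examine the proper coloring around the edge $uw$: the edges $uw$, together with $u$'s remaining edges and $w$'s remaining edges, all emanate from the triangle(s), and I would run through the rainbow-$P_5$-free constraints (as in the proofs of Lemmas~\ref{c3}, \ref{lm5}, \ref{tr}) on paths that pass through $u$, across $uw$, and into $w$'s private neighborhood, to derive contradictions or force saturation.

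Concretely, with at most two common neighbors I would split into the subcases of zero, one, or two common neighbors. With two common neighbors $u_1,u_2$, the configuration $uu_1wu_2$ is a $4$-cycle, and combined with $u$'s and $w$'s private high-degree edges I expect to build a fish graph or a medium-pair and invoke Lemma~\ref{c4}, Lemma~\ref{l5}, or Lemma~\ref{ws} to force a rainbow-$P_5$ (a contradiction) unless the private neighbors are degree-$2$ and saturated in a way that yields two adjacent degree-$2$ vertices or a nice-component. The case of one or zero common neighbors is where $u$ and $w$ behave almost like the nonadjacent situation of Remark~\ref{rf}(ii): a private neighbor $x$ of $u$ together with $w$ gives a path $x u w y$ (with $y$ a private neighbor of $w$) whose extensions in both directions I would color-analyze; since $u$ has so many edges, the pigeonhole on the five-plus colors at $u$ should create a rainbow $P_5$ spanning from a neighbor of $x$ (or $u$) through $u,w$ to a neighbor of $w$, again forcing the degree-$2$ conclusion.

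The hard part will be the bookkeeping in the one- and two-common-neighbor subcases: unlike the clean medium-pair and heavy-pair configurations where $u$ and $w$ are nonadjacent, the extra edge $uw$ changes which $P_5$'s are available and shifts the forced color identities, so I cannot simply quote Lemma~\ref{ws} or Lemma~\ref{cv1} verbatim and must redo the path-by-path color deductions while tracking the ``brand new'' colors introduced by the large degrees of $u$ and $w$. The main obstacle is ensuring that after all color constraints are imposed, the only escape from a rainbow-$P_5$ is that $u$'s and $w$'s private neighbors have degree exactly $2$ and are saturated, at which point I would either exhibit two adjacent degree-$2$ vertices, isolate a small component with at most $3k/2$ edges (a nice-component), or locate a degree-$1$ vertex, matching one of the three conclusions of the lemma.
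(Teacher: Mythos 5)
Your opening reduction is fine and agrees with the paper: three or more common neighbors of $u$ and $w$ give three $3$-cycles sharing the edge $uw$, and Lemma~\ref{tr} finishes. But from that point on the proposal is a plan rather than a proof, and the plan aims in the wrong direction. In the case $N(u)\cap N(w)=\emptyset$ you propose to pigeonhole colors on paths running across $uw$ so as to force a rainbow-$P_5$; that cannot close this case as stated, because here the lemma's conclusion is not a contradiction---the coloring may be perfectly rainbow-$P_5$-free, and what must be produced is a pair of adjacent degree-$2$ vertices. The paper's actual engine, which your sketch never finds, is to apply Lemma~\ref{qw} not to the pair $(u,w)$ but to the \emph{non-adjacent} pair $(u_1,w)$ with $u_1\in N(u)\setminus\{w\}$: since $N(u)\cap N(w)=\emptyset$, the vertex $u_1$ is not adjacent to $w$ yet shares the common neighbor $u$ with it, so if $d(u_1)=3$ then $N(u_1)\subseteq N(w)$. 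This creates a medium-pair between $u_1$ and $w$ (through $u$ and the two other neighbors $x,y$ of $u_1$); since $d(w)\geq 4$, the three-coloring scheme is excluded by Lemma~\ref{ws}, and in the four-coloring scheme the path $u^*uwxu_1$ confines $c(u^*u)$ to two colors for every $u^*\in N(u)\setminus\{u_1,w\}$---impossible under a proper coloring once $d(u)\geq 5$. Hence every vertex of $N(u)\setminus\{w\}$ has degree at most $2$, which is the structural pivot your color-chasing cannot reach.

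The finish likewise needs a step you do not supply: given $d(u_1)=2$ and $x\in N(u_1)\setminus\{u\}$, either $x\in N(u)$, in which case $x$ also has degree $2$ by the above and $u_1,x$ are adjacent degree-$2$ vertices, or $x\notin N(u)$, in which case Remark~\ref{rf}~(ii) applied to the non-adjacent pair $(u,x)$ (common neighbor $u_1$, $d(u)\geq 5$) forces $d(x)\leq 2$, with the same conclusion. Note also that the paper disposes of the entire case $N(u)\cap N(w)\neq\emptyset$ in one stroke by combining Lemma~\ref{qw} with Lemma~\ref{tr}, so your separate one- and two-common-neighbor subcases---for which you offer only the expectation of building a fish or a medium-pair and redoing the path-by-path color deductions---are exactly the portion of your plan that is both unexecuted and avoidable. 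As it stands, the hard cases of the lemma are named but not proved.
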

\begin{proof} 
If $N(u)\cap N(w)\neq\emptyset$, then from Lemma~\ref{qw}, there are at least three $3$-cycles sharing the edge $uw$, and we can finish the proof using Lemma~\ref{tr}.

Let $\{u_1,u_2,u_3,u_4\}\subseteq N(u)\backslash\{w\}$. First we verify that $d(u_1)\leq 2$. Suppose for contradiction $d(u_1)=3$. One can see that $u_1$ is adjacent to no vertex in $\{u_2,u_3,u_4\}$. Indeed, let $u_1u_2\in E(G)$. Since $u_1w\notin E(G)$, $N(u_1)\cap N(w)\neq\emptyset$ and $d(w)\geq 4$, by Lemma~\ref{qw}, $u_2\in N(w)$ and hence $N(u)\cap N(w)\neq\emptyset$,  which is a contradiction.

Let $x,y\in N(u_1)$. From Lemma~\ref{qw}, $x,y\in N(w)$. Therefore, we get a medium-pair $wuu_1, wxu_1$ and $wyu_1$. Since $d(w)\geq 4$, there is a vertex in $N(w)\backslash \{u,x,y\}$, say $w_1$. Clearly we have a four-coloring scheme for the medium-pair and we have two coloring permutation of the cherries. We check for the coloring $c(wy)=c(u_1x)=1,\  c(wx)=c(u_1y)=2,\  c(wu)=3$ and $c(uu_1)=4$. Similar arguments could be given for the other coloring permutation of the medium-pair. Let $u^*\in N(u)\backslash\{u_1,w\}$. Because of the $P_5$ path $u^*uwxu_1$, $c(u^*u)\in\{1,2\}$. Thus from the color options and since $d(u)\geq 5$ it is easy to find a rainbow-$P_5$ path which is a contradiction. 

Now assume $d(u_1)=2$ and $x\in N(u_1)\backslash\{w\}$. It can be shown that $d(x)\leq 2$, and with this we complete the proof. Indeed, for the reason given above if $x\in N(u)$, then $d(x)=2$ and we get two adjacent degree-$2$ vertices $u_1$ and $x$, and hence we are done. Suppose that $d(x)\geq 3$. Then by Remark~\ref{rf}~(ii), the graph contains a rainbow-$P_5$, which is a contradiction. This completes the proof of Lemma~\ref{tn1}. 
\end{proof}
\section{Avoiding some particular graph structures}\label{fg}
In this section we do some further refinement by avoiding particular structures where we can still finish the proof by induction. The following theorems and results are important to doing so. 
\begin{theorem}\label{nm}
Let $u$ and $w$ be two non adjacent degree-4 vertices in $G$ such that $N(u)\cap N(w)\neq\emptyset$. Then $G$ contains either a degree-$1$ vertex or a nice-component.     
\end{theorem}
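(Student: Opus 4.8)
By Remark~\ref{rf}~(i), since $u$ and $w$ are both degree-$4$ vertices with $N(u)\cap N(w)\neq\emptyset$ and $uw\notin E(G)$, we have $N(u)=N(w)$. Write $N(u)=N(w)=\{u_1,u_2,u_3,u_4\}$. Then the four cherries $vu_iw$ — more precisely $uu_iw$ for $i\in\{1,2,3,4\}$ — form a heavy-pair ($K_{2,4}$) with the two degree-$4$ hubs $u$ and $w$. The plan is to apply the heavy-pair coloring scheme of Lemma~\ref{cv1} to pin down the colors completely, and then argue that every $u_i$ is saturated, so that $G[\{u,w,u_1,u_2,u_3,u_4\}]$ becomes an isolated nice-component.

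First I would invoke Lemma~\ref{cv1}, so that up to relabelling $c(uu_i)=i$ and the colors on the $w$-side are forced: $c(wu_1)=c(uu_2)$, $c(wu_2)=c(uu_1)$, $c(wu_3)=c(uu_4)$, $c(wu_4)=c(uu_3)$. This is exactly the paired structure used in Lemma~\ref{lm5}, so the bulk of the argument should mirror that proof. The key step is to show each $u_i$ is saturated, i.e.\ $d_G(u_i)=2$. Since $u$ and $w$ are already degree-$4$ and non-adjacent, we must rule out (a) edges between two of the $u_i$'s, and (b) edges from some $u_i$ to a vertex $x$ outside $S=\{u,w,u_1,u_2,u_3,u_4\}$.

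For (a), an edge $u_iu_j$ would create a bow tie of the form $(u_iu_ju,\,uwu_k)$ (using a third neighbor $u_k$ and the edge $uw$ is not available, so instead I would use the triangle $uu_iu_j$ together with a triangle through $w$); by Lemma~\ref{c3} the relevant outer vertices would be forced saturated, contradicting $d(u)=d(w)=4$. This is precisely the opening move of the proof of Lemma~\ref{lm5}, and it transfers directly. For (b), suppose $xu_1\in E(G)$ for some $x\notin S$. Using the forced colors from Lemma~\ref{cv1}, I would test $c(xu_1)$ against several $P_5$'s that start $xu_1$ and then run through $w$ or $u$ into the remaining neighbors (e.g.\ $xu_1wu_3u$, $xu_1wu_4u$, $xu_1uu_3w$, and so on); each such path constrains $c(xu_1)$ to a different small set, and the intersection of these constraints is empty, yielding a rainbow-$P_5$ and hence a contradiction. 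The symmetry among $u_1,u_2,u_3,u_4$ reduces the casework to a single representative.

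Once every $u_i$ is saturated and no $u_i$ leaves $S$, the vertices $u,w$ (being degree-$4$ into $\{u_1,u_2,u_3,u_4\}$) are also confined to $S$, so $G[S]$ is a connected component on $6$ vertices. Counting edges gives $e(G[S])=8$ (the eight $K_{2,4}$ edges, with $uw\notin E(G)$), and $8\le 3\cdot 6/2=9$, so $G[S]$ is a nice-component and the induction concludes. I expect the main obstacle to be step (b): verifying that the several $P_5$ constraints on $c(xu_1)$ are jointly unsatisfiable requires carefully choosing paths that avoid repeating a color forced by Lemma~\ref{cv1}, and one must make sure each chosen $P_5$ is genuinely a path (the intermediate vertices distinct) given that $x$ may or may not be adjacent to $u$ or $w$. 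If $x$ happens to be adjacent to $w$ as well, the cleanest resolution is to observe that we are then back in the four-triangles-sharing-an-edge situation of Lemma~\ref{lm5}, which already delivers a nice-component, so that subcase is absorbed rather than fought directly.
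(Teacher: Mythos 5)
Your proposal has a genuine gap, and it occurs exactly at the step you flagged as the main obstacle: claim (b) is false. With the heavy-pair colors forced by Lemma~\ref{cv1} (say $c(uu_i)=i$, $c(wu_1)=2$, $c(wu_2)=1$, $c(wu_3)=4$, $c(wu_4)=3$), the $P_5$'s through $xu_1$ do \emph{not} give jointly unsatisfiable constraints: they only force $c(xu_1)\in\{3,4\}$, and either value is consistent with rainbow-$P_5$-freeness (check, e.g., $xu_1uu_3w$ with colors $3,1,3,4$ and $xu_1wu_2u$ with colors $3,2,1,2$ --- none rainbow). So a vertex $u_i$ need not be saturated, and the component need not be the bare $K_{2,4}$. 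What the paper's proof actually shows at this point is weaker: if $xu_1\in E(G)$ with $x\notin S'$, then $x$ has no neighbor $y$ outside $S'\cup\{x\}$ (the constraints on $c(yx)$, not on $c(xu_1)$, intersect to the empty set). Hence either $d(x)=1$ (done), or $x$ attaches to a \emph{second} vertex $u_j$. These external vertices adjacent to two of the $u_i$'s genuinely occur, and handling them is the bulk of the paper's proof: a case analysis on whether some $u_i$ has degree $4$ or only degree $3$, using planarity to limit how $z_1,z_2$ can attach and Lemma~\ref{qw} to saturate the remaining vertices, producing nice-components on $7$ vertices with $10$ edges or $8$ vertices with $12$ edges --- not only the $6$-vertex, $8$-edge component your argument aims for.

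Your claim (a) fails for a related reason. Since $uw\notin E(G)$, an edge $u_1u_2$ creates two triangles $uu_1u_2$ and $wu_1u_2$ sharing an \emph{edge}, not a bow tie, so Lemma~\ref{c3} does not apply; and indeed the paper shows that exactly one edge from $\{u_1u_2,\,u_3u_4\}$ may be present, colored from $\{3,4\}$, consistently with all constraints (only the pair $u_1u_2,u_3u_4$ together gives a bow tie, and only $u_1u_4,u_2u_3$ are excluded by coloring). Finally, your fallback subcase ``$x$ adjacent to $w$, absorbed by Lemma~\ref{lm5}'' is vacuous here: $w$ is already saturated ($N(w)=\{u_1,u_2,u_3,u_4\}$ with $d(w)=4$), and Lemma~\ref{lm5} concerns four triangles sharing the edge $uw$, which does not exist in this theorem's hypothesis. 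In short, the overall frame (Remark~\ref{rf}(i) giving $N(u)=N(w)$, then Lemma~\ref{cv1}) matches the paper, but the heart of the proof --- the configurations where neighbors of $u$ and $w$ have external attachments, and the resulting larger nice-components --- is missing, and the shortcut you propose in its place is not valid.
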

\begin{proof}Denote $c(uu_i)=i$, where $i\in\{1,2,3,4\}$. Denote also $S=\{u_1,u_2,u_3,u_4\}$ and $S'=S\cup\{u,w\}$.  

From Remark~\ref{rf}~\text{(i)}, we may assume that $N(u)\cap N(w)=\{u_1,u_2,u_3,u_4\}$. Moreover, from the heavy-pair coloring scheme without loss of generality we may assume that $c(wu_4)=3,\ c(wu_3)=4,\ c(wu_2)=1$ and $c(wu_1)=2$. We have two possible arrangements of the colors as shown in Figure~\ref{sas}.

\begin{figure}[ht]
\centering
\begin{tikzpicture}[scale=0.25]
\draw[ultra thick](0,-6)--(3,0)(0,-6)--(8,0)(0,-6)--(-3,0)(0,-6)--(-8,0)(0,6)--(3,0)(0,6)--(8,0)(0,6)--(-3,0)(0,6)--(-8,0);
\draw[fill=black] (0,-6) circle (12pt);
\draw[fill=black] (0,6) circle (12pt);
\draw[fill=black] (3,0) circle (12pt);
\draw[fill=black] (8,0) circle (12pt);
\draw[fill=black] (-3,0) circle (12pt);
\draw[fill=black] (-8,0) circle (12pt);
\node at (-5.5,-3) {$1$};
\node at (5.5,-3) {$4$};
\node at (-2.5,-3) {$2$};
\node at (2.5,-3) {$3$};
\node at (-5.5,3) {$2$};
\node at (5.5,3) {$3$};
\node at (-2.5,3) {$1$};
\node at (2.5,3) {$4$};
\node at (0,-7.5) {$u$};
\node at (0,7.5) {$w$};
\node at (-9.5,0) {$u_1$};
\node at (-1.5,0) {$u_2$};
\node at (9.5,0) {$u_4$};
\node at (1.5,0) {$u_3$};

\begin{scope}[shift={(25,0)}]
\draw[ultra thick](0,-6)--(3,0)(0,-6)--(8,0)(0,-6)--(-3,0)(0,-6)--(-8,0)(0,6)--(3,0)(0,6)--(8,0)(0,6)--(-3,0)(0,6)--(-8,0);
\draw[fill=black] (0,-6) circle (12pt);
\draw[fill=black] (0,6) circle (12pt);
\draw[fill=black] (3,0) circle (12pt);
\draw[fill=black] (8,0) circle (12pt);
\draw[fill=black] (-3,0) circle (12pt);
\draw[fill=black] (-8,0) circle (12pt);
\node at (0,-7.5) {$u$};
\node at (0,7.5) {$w$};
\node at (-5.5,-3) {$1$};
\node at (5.5,-3) {$4$};
\node at (-2.5,-3) {$3$};
\node at (2.5,-3) {$2$};
\node at (-5.5,3) {$2$};
\node at (5.5,3) {$3$};
\node at (-2.5,3) {$4$};
\node at (2.5,3) {$1$};
\node at (-9.5,0) {$u_1$};
\node at (-1.5,0) {$u_3$};
\node at (9.5,0) {$u_4$};
\node at (1.5,0) {$u_2$};
\end{scope}
\end{tikzpicture}
\label{sas}
\caption{Two possible edge-color arrangements of the heavy-pair.}
\end{figure}
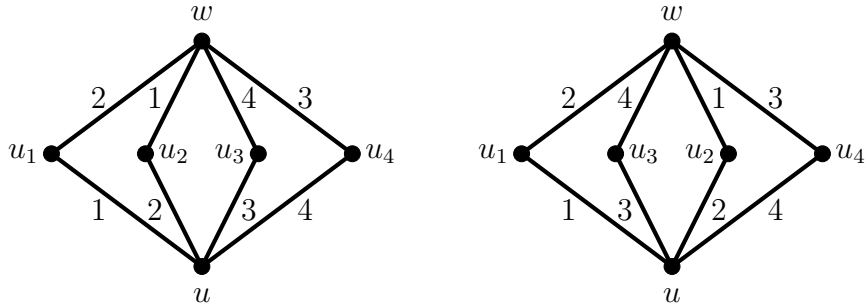

We give the proof for the first color arrangement. But similar argument can be given for the second color arrangement. 

Clearly both $u$ and $w$ are saturated vertices. Let $x\in V(G)\backslash S'$ and $xu_i\in E(G)$, for some $i\in \{1,2,3,4\}$. There is no vertex $y\in V(G)\backslash S'\cup\{x\}$ such that $xy\in E(G)$. Indeed, without loss of generality suppose $xu_1\in E(G)$. From the $P_5$ path $xu_1uu_3w$, $c(xu_1)\in\{3,4\}$. Let $c(xu_1)=3$. Then from the $P_5$ paths $yxu_1uu_2$, $yxu_1uu_4$ and $yxu_1wu_3$, $c(yx)$ is respectively in $\{1,2\}$, $\{1,4\}$ and $\{2,4\}$, which is a contradiction. Similar arguments can be given if we assume that $c(xu_1)=4.$  Therefore, for each vertex $x\in V(G)\backslash S'$ such that $xu_i\in E(G)$, $xu_j\in E(G)$ for some $j\neq i$, where $i,j\in\{1,2,3,4\}$, otherwise we are done. Moreover, each vertex in $S$ is with degree at most $4$. 

If each vertex in $S$ is of degree-$2$, then $G[S']$ is a $6$-vertex and $8$-edge nice-component and we are done. Moreover, only one edge in $\{u_1u_2, u_3u_4\}$ may exist in $G$ and both $u_1u_4, u_2u_3\notin E(G)$. Indeed, in the former case if both exist we get a bow tie with one of its degree-$2$ vertex which is not saturated, which is a contradiction. To verify the later case, suppose $u_2u_3\in E(G)$. Then from the known edge colors of the $P_5$ path $u_2u_3wu_1u$, $c(u_2u_3)=1$, which is again a contradiction as $wu_2$ is already colored with $1$ and yet incident to the edge $u_2u_3$. 

Suppose $u_1u_2\in E(G)$. Clearly $c(u_1u_2)\in\{3,4\}$, and without loss of generality assume $c(u_1u_2)=3$. It can be checked that both $u_3$ and $u_4$ are saturated vertices. To see this, suppose $u_4$ is not saturated vertex and $x\in  V(G)\backslash S'$ such that $xu_4\in E(G)$. From the $P_5$ paths $xu_4wu_2u, xu_4uu_2u_1$ and $xu_4uu_1u_2$, $c(xu_4)$ is in $\{1,2\},\{2,3\}$ and $\{1,3\}$ respectively which is a contradiction.

Next we show that if $u_1$ is not a saturated vertex, it is adjacent to a degree-$1$ vertex. This also holds for $u_2$. To see this, let $xu_1\in E(G)$. Clearly $c(xu_1)=4$ and $xu_4\notin E(G)$, as $u_4$ is a saturated vertex. If $xu_2\in E(G)$, then $c(xu_2)\in \{3,4\}$. But both colors are not possible as $u_1u_2$ and $u_1x$ are colored with $3$ and $4$ and are incident to $xu_2$. Therefore, if such a vertex $x$ exist, then $d(x)=1$, and we are done otherwise $G[S']$ is a $6$-vertex and $9$-edge nice-component and we are done. We distinguish two cases depending on the degrees of vertices in $S$.

\medskip

\noindent \textit{Case 1: There is a degree-$4$ vertex in $S$.}

\smallskip

Without loss of generality assume $d(u_1)=4$ and $z_1, z_2\in V(G)\backslash S'$ such that $z_1u_1, z_2u_1\in E(G)$. From the planarity of $G$ one of the following condition could happen.
\begin{enumerate}
\item $\{z_1,z_2\}\subset N(u_2)$. From Lemma~\ref{qw} it can be shown that each vertex in $\{z_1,z_2,u_3,u_4\}$ is saturated and therefore $G[S'\cup\{z_1,z_2\}]$ is an $8$-vertex and $12$-edge nice-component.
\item $\{z_1,z_2\}\subset N(u_4)$. Again using Lemma~\ref{qw} it can be verified that each vertex in $\{z_1,z_2,u_2,u_3\}$ is saturated and therefore $G[S'\cup\{z_1,z_2\}]$ is an $8$-vertex and $12$-edge nice-component.
\item $z_1\in N(u_2)$ and $z_2\in N(u_4)$ or vice versa. In both cases using Lemma~\ref{qw}, it can be shown that each vertex in $\{z_1,z_1,u_2,u_3,u_4\}$ are saturated and hence $G[S'\cup\{z_1,z_2\}]$ is an $8$-vertex and $12$-edge nice-component.
\end{enumerate}

\medskip

\noindent \textit{Case 2: There is no degree-$4$ vertex in $S$.}

\smallskip

Without loss of generality, assume $d(u_1)=3$ and $z_1\in V(G)\backslash S'$ such that $z_1u_1\in E(G)$. Obviously $c(z_1u_1)\in \{3,4\}$. Let $c(z_1u_1)=3$. Clearly either $z_1u_2\in E(G)$ or $z_1u_4\in E(G)$, but not both due to the the planarity of $G$.

We show the case when $z_1u_2\in E(G)$, and a similarly argument can be given when $z_1u_4\in E(G)$. It is obvious that $c(z_1u_2)=4$, and $z_1$ is a saturated vertex. If both $u_3$ and $u_4$ are saturated vertices, then $G[S'\cup\{z_1\}]$ is a $7$-vertex and $10$-edge nice-component and we are done. Otherwise, we have a cherry $u_3z_2u_4$ in $G$, where $z_2\in V(G)\backslash S'$ and $c(u_3z_1)=1$ and $c(z_1u_4)=2$ or vice versa. In this case $z_2$ is also a saturated vertex and hence $G[S'\cup\{z_1,z_2\}]$ is an $8$-vertex and $12$-edge nice-component and we are done.  This completes the proof of Theorem~\ref{nm}.  
\end{proof}

\begin{definition}
A \emph{double diamond} is two $4$-cycles sharing exactly one vertex, see a double diamond in Figure~\ref{bvx}. We may describe the double diamond in the figure using its vertices as $(u_1xu_2u,uu_3yu_4)$-double diamond.     
\end{definition}

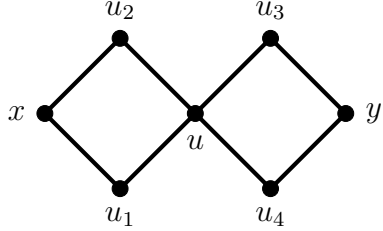
\begin{figure}[ht]
\centering
\begin{tikzpicture}[scale=0.25]
\draw[ultra thick](0,0)--(-4,-4)(-4,4)--(0,0)--(4,4)(4,-4)--(0,0)(4,4)--(8,0)--(4,-4)(-4,4)--(-8,0)--(-4,-4);
\draw[fill=black](-4,-4)circle(12pt);
\draw[fill=black](-4,4)circle(12pt);
\draw[fill=black](4,4)circle(12pt);
\draw[fill=black](4,-4)circle(12pt);
\draw[fill=black](0,0)circle(12pt);
\draw[fill=black](8,0)circle(12pt);
\draw[fill=black](-8,0)circle(12pt);
\node at (-4,-5.5) {$u_1$};
\node at (-4,5.5) {$u_2$};
\node at (4,5.5) {$u_3$};
\node at (4,-5.5) {$u_4$};
\node at (0,-1.5) {$u$};
\node at (9.5,0) {$y$};
\node at (-9.5,0) {$x$};
\end{tikzpicture}
\caption{The double diamond.}
\label{bvx}
\end{figure}

\begin{lemma}\label{fxc}
Let $G$ contains $(u_1xu_2u,uu_3yu_4)$-double diamond and $S=\{u_1,u_2,u_3,u_4,x,y\}$. If there is a vertex $u_1'\in N(u_1)\backslash S$ and $u_2'\in N(u_2)\backslash S$, then $u_1'$ and $u_2'$ are identical vertices. 
\end{lemma}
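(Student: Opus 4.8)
The statement asserts that a double diamond has a rigid ``external neighbor'' structure: if $u_1$ and $u_2$ (the two vertices of the left $4$-cycle other than $u$ and $x$) each have a neighbor outside $S$, these neighbors must coincide. The natural approach is to first pin down the forced coloring of the double diamond by reading off the colorings of its two constituent $4$-cycles, then to track which colors an external edge at $u_1$ or $u_2$ is allowed to take, and finally to argue that two \emph{distinct} external neighbors would produce a rainbow-$P_5$ running across the diamond. The key tool is the fish graph coloring scheme (Lemma~\ref{c4}): each $4$-cycle of the double diamond together with one of the other cherries at $u$ forms a fish, so the cycle colors are essentially determined.

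\textbf{First steps.} I would begin by normalizing $c(uu_i)=i$ for $i\in\{1,2,3,4\}$. The left $4$-cycle $u_1xu_2u$ sits inside the fish $(u_3u_4u,\,uu_1xu_2)$ (using the cherry $u_3uu_4$ as the triangle of the fish, formed by the two right-hand spokes), so Lemma~\ref{c4} forces $c(xu_1)=c(uu_2)=2$ and $c(xu_2)=c(uu_1)=1$; symmetrically the right $4$-cycle inside the fish $(u_1u_2u,\,uu_3yu_4)$ forces $c(yu_3)=c(uu_4)=4$ and $c(yu_4)=c(uu_3)=3$. This gives a completely determined coloring of the eight edges of the double diamond, which is what the remaining argument will exploit.

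\textbf{The external-edge analysis.} Now suppose $u_1'\in N(u_1)\setminus S$ and $u_2'\in N(u_2)\setminus S$. I would determine the admissible colors of $u_1u_1'$ by checking it against the length-four paths that start $u_1'u_1\dots$ and continue through the diamond, for instance $u_1'u_1xu_2u$ and $u_1'u_1uu_3y$ (and the analogous paths through the right cycle); rainbow-$P_5$-freeness restricts $c(u_1u_1')$ to a small set, and likewise for $c(u_2u_2')$. The heart of the proof is then to assume $u_1'\neq u_2'$ and build a rainbow $P_5$: with both external edges present and distinctly colored relative to the diamond, a path of the form $u_1'u_1xu_2u_2'$ (length four, five vertices) or $u_1'u_1u\cdots u_2u_2'$ will traverse the diamond using four distinct colors. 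The only way to block every such path is for the two external edges to meet, i.e.\ $u_1'=u_2'$.

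\textbf{Main obstacle.} The delicate point is that the colors of the two external edges are not forced to single values but only confined to two-element sets, so one cannot immediately write down one rainbow path; instead I expect to need a short case split on the color of $u_1u_1'$ (and possibly $u_2u_2'$), and for each case exhibit a concrete rainbow-$P_5$ through the diamond that is available precisely when $u_1'\neq u_2'$. Care is also required because $u_1'$ or $u_2'$ might \emph{a priori} coincide with $u_3,u_4,x,$ or $y$; the hypothesis $u_1',u_2'\notin S$ removes those degeneracies, so the argument should reduce cleanly to the distinct-vs-equal dichotomy. The planarity of $G$ may additionally be invoked to rule out crossing configurations, mirroring how it is used in the proof of Theorem~\ref{nm}.
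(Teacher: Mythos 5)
There is a genuine gap, and it occurs at the very first step. You invoke Lemma~\ref{c4} (the fish coloring scheme) by treating the cherry $u_3uu_4$ as ``the triangle of the fish,'' but a cherry is not a $3$-cycle: the double diamond contains no edge $u_3u_4$ (nor $u_1u_2$), hence no fish subgraph at all, and the fish lemma simply does not apply. The extra constraint supplied by the triangle edge is exactly what forces the swap pattern in Lemma~\ref{c4}; without it, the coloring of the diamond's $4$-cycles is \emph{not} determined. Concretely, with $c(uu_i)=i$, the coloring $c(xu_2)=3$, $c(xu_1)=4$, $c(yu_3)=1$, $c(yu_4)=2$ is a proper coloring of the double diamond avoiding a rainbow-$P_5$ (every $5$-vertex path inside the diamond repeats a color), yet it violates your claimed forced pattern $c(xu_1)=c(uu_2)$, $c(xu_2)=c(uu_1)$. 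The paper accordingly opens with only $c(xu_2)\in\{1,3,4\}$ and splits into the case $c(xu_2)=1$ (your swap pattern) and the case $c(xu_2)\in\{3,4\}$, and the latter case is entirely absent from your plan.

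The omission matters beyond bookkeeping, because in the missing case your endgame heuristic also fails: there the contradiction is not a path joining $u_1'$ to $u_2'$ across the diamond. In the paper's treatment of $c(xu_2)=3$, one first pins down $c(u_2u_2')\in\{1,4\}$, then uses the path $u_1'u_1xu_2u_2'$ to force $c(u_1'u_1)=3$, then propagates through the right cycle to get $c(yu_3)=1$ and $c(yu_4)=2$, and the final rainbow path is $yu_4uu_1u_1'$ (colors $2,4,1,3$), which uses only \emph{one} external vertex. So the distinct-vs-equal dichotomy you describe (``the only way to block every such path is for the external edges to meet'') does not carry the argument in that branch; the hypothesis $u_1'\neq u_2'$ enters more subtly, via the color forcing along $u_1'u_1xu_2u_2'$. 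Your swap case does essentially match the paper's first case, where $c(u_1u_1')=4$, $c(u_2u_2')=3$ are forced and $u_1'u_1uu_2u_2'$ is rainbow, and the planarity remark is harmless but unnecessary (the paper's proof uses no planarity). To repair the proposal you would need to delete the fish-lemma step, replace it with the honest case analysis on $c(xu_2)$, and supply the second case's chain of deductions.
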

\begin{proof}
Let the edge-coloring be $c$ and denote $c(uu_i)=i$, where $i\in\{1,2,3,4\}$. It is easy to show that $c(xu_2)\in\{1,3,4\}$. Suppose for contradiction $u_1'$ and $u_2'$ are distinct vertices.  

First, we assume $c(xu_2)=1$. From the $P_5$ path $u_1xu_2uu_3$ and $u_1xu_2uu_4$, $c(xu_1)=2$. Clearly $c(yu_3)\in \{1,2\}$ and $c(yu_4)\in \{1,2\}$. Without loss of generality assume $c(yu_3)=1$ and $c(yu_4)=2$. With these colors we have $c(u_1u_1')=4$ and $c(u_2u_2')=3$. However, we get a contradiction as $u_1'u_1uu_2u_2'$ is a rainbow-$P_5$ path with edge colors from left to right are respectively $4,\ 1,\ 2$ and $3$.

Next assume without loss of generality $c(xu_2)=3$. With this $c(xu_1)=4$. It is easy to show that $c(u_2'u_2)\in \{1,4\}$. Let $c(u_2'u_2)=1$. From the $P_5$ path $u_1'xu_2u_2'$, $c(u_1'u_1)\in\{1,3\}$. It can not be $1$ as $uu_1$ is an incident edge to $u_1'u_1$ and already received this color. Thus, $c(u_1'u_1)=3$. From the $P_5$ path $yu_3uu_2u_2'$ and $yu_3uu_1x$, we have $c(yu_3)=1$. This color and the known colors of the $P_5$ path $u_3yu_4uu_2$ result $c(yu_4)=2$. However we obtain a rainbow path $yu_4uu_1u_1'$, where edge colors from left to right are respectively $2,\ 4,\ 1$ and $3$. A similar argument can be be given if we assume that $c(u_2'u_2)=4$. 
\end{proof}

\begin{remark}\label{fvc}
\emph{Notice that from Lemma~\ref{fxc} if $u_3'\in N(u_3)\backslash S$ and $u_4'\in N(u_4)\backslash S$ exist, then $u_3'$ and $u_4'$ are identical vertices. Moreover, if the identical vertices $u_1'$ and $u_2'$ exist (similarly $u_3'$ and $u_4'$), then using the four-coloring scheme of the medium-pair it  can be seen that both $u_3'$ and $u_4'$ do not exist.}   
\end{remark}

\begin{theorem}\label{cdf}
If $G$ contains a vertex of degree at least $5$, then $G$ contains either a degree-$1$ vertex or two adjacent vertices of degree-$2$ or a nice-component.     
\end{theorem}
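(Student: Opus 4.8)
The plan is to show that the existence of a vertex $v$ with $d(v)\geq 5$ forces one of the three ``inductive escape hatches'' (a degree-$1$ vertex, two adjacent degree-$2$ vertices, or a nice-component) to appear, so that in every case we may close the argument by the reductions set up in Section~\ref{er}. First I would fix such a vertex $v$ and invoke the accumulated structure theorems to restrict the configuration around $v$. By Lemma~\ref{tn1} we may assume that every neighbor of $v$ has degree at most $3$; indeed, if some neighbor had degree at least $4$, then $v$ and that neighbor would be two adjacent vertices of degrees $\geq 5$ and $\geq 4$, and Lemma~\ref{tn1} already delivers one of the desired outcomes. Similarly, Remark~\ref{rf}~(ii) forbids a vertex $w$ with $d(w)\geq 3$ that is non-adjacent to $v$ but shares a neighbor with $v$, since this would create a medium-pair together with two further $v$-neighbors and hence a rainbow-$P_5$ by Lemma~\ref{ws}.

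Next I would analyze the neighborhood $N(v)=\{v_1,\dots,v_d\}$ with $d\geq 5$, distinguishing the neighbors by their degrees. The key structural fact to extract is that most of the $v_i$ must be low-degree (degree $2$), using the proper-coloring constraints around $v$: since the $d$ edges at $v$ all carry distinct colors and every rainbow-$P_5$ through $v$ is forbidden, any longer path leaving a neighbor $v_i$ is severely constrained. Concretely, I would argue that if $v_i$ has degree $3$ with an external neighbor $x\notin N[v]$, then $x$ cannot have high degree (Remark~\ref{rf}~(ii)), and the colorings of paths $x v_i v v_j v_k$ pin down $c(xv_i)$ to a tiny set of admissible colors; with $d\geq 5$ available colors at $v$, this typically produces a rainbow-$P_5$ directly, forcing $d(v_i)=2$. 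This is the analogue of the degree-reduction carried out inside the proof of Lemma~\ref{tn1}, and I expect to reuse those path-coloring computations almost verbatim.

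Once the neighbors are shown to be essentially degree-$2$, I would chase down where their second edges go. A degree-$2$ neighbor $v_i$ has one external edge to some vertex $x$; if $x\in N(v)$ then $v_i$ and $x$ are adjacent degree-$2$ vertices and we are done, and if $x\notin N[v]$ then by Remark~\ref{rf}~(ii) we get $d(x)\leq 2$, again yielding two adjacent degree-$2$ vertices (or a degree-$1$ vertex if $x$ has no further edge). Collecting these cases, every neighbor of $v$ either has degree $1$, lies in an adjacent degree-$2$ pair, or participates in a small saturated configuration around $v$ that forms a nice-component (a component $H$ on $k$ vertices with $e(H)\leq 3k/2$); the saturation is guaranteed because the coloring schemes (Lemmas~\ref{c2}, \ref{c4}, \ref{l5}, \ref{cv1}) force the low-degree vertices to be saturated, exactly as in Lemmas~\ref{c3}, \ref{lm5}, and \ref{tr}.

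The main obstacle I anticipate is the bookkeeping of the \emph{planar} embedding when several degree-$2$ neighbors of $v$ route their second edges to a common external vertex, creating stars or $K_{2,t}$-type subconfigurations. Here I would lean on the medium-pair and heavy-pair lemmas (Lemmas~\ref{ws}, \ref{cv1}) together with planarity to bound how many such shared neighbors can coexist, and then verify that the resulting induced subgraph on $N[v]$ (possibly together with one or two external vertices) is a nice-component by a direct edge count. The delicate point is ensuring that in the borderline counts the edge-to-vertex ratio stays at or below $3/2$; I expect this to reduce to checking a handful of explicit small graphs, much as the final cases of Theorem~\ref{nm} reduce to identifying $6$-, $7$-, and $8$-vertex nice-components with the right number of edges.
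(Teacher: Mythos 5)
Your opening reductions coincide with the paper's: Lemma~\ref{tn1} lets you assume every neighbor of $v$ has degree at most $3$, and Remark~\ref{rf}~(ii) forces every second neighbor of $v$ to have degree at most $2$, which is exactly how the paper dispatches degree-$2$ neighbors. (One small slip there: if a degree-$2$ neighbor $v_i$ has its second edge going to $x\in N(v)$, you cannot conclude $x$ has degree $2$; $x$ may have degree $3$, and edges inside $N(v)$ are precisely what the paper handles separately via the bow-tie saturation Lemma~\ref{c3} and explicit color chases in its Case~1.) The genuine gap is your central claim that a degree-$3$ neighbor $v_i$ with external neighbor $x$ ``typically produces a rainbow-$P_5$ directly, forcing $d(v_i)=2$.'' This is false: writing $c(vv_i)=i$, any path $v_jvv_ixw$ has colors $j,i,c(v_ix),c(xw)$, and choosing $c(xw)=i$ kills every such path at once, while paths avoiding $v$ are too short or repeat colors since $d(x)\leq 2$. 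So the configuration of the paper's Case~2 --- all five neighbors of degree $3$ with no edges inside $N(v)$ --- admits a proper, locally rainbow-$P_5$-free coloring, and no neighbor-by-neighbor color chase can eliminate it. The contradiction has to be extracted globally, which is where the paper does work your plan does not account for.

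The missing idea is the double-diamond Lemma~\ref{fxc} and Remark~\ref{fvc}. Because second neighbors have degree at most $2$, two neighbors of $v$ sharing an external vertex create a $4$-cycle through $v$ (a $K_{2,2}$, never a $K_{2,3}$), so the medium-pair and heavy-pair lemmas you propose to lean on for the ``shared neighbor'' bookkeeping simply never apply to these configurations. The paper instead shows that two $4$-cycles through $v$ must share an edge incident to $v$ (no double diamond centered at $v$), so at most three of the edges $vv_1,\dots,vv_5$ lie on $4$-cycles through $v$; taking $vv_4,vv_5$ on no such $4$-cycle makes the four external neighbors $u_4',u_4'',u_5',u_5''$ pairwise distinct, and the paths $u_4'u_4vu_5u_5'$ and their variants confine the external colors to $\{5,r\}$ and $\{4,r\}$ for a single extra color $r$. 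Then some $u^*\in\{u_1,u_2,u_3\}$ has $c(vu^*)\notin\{4,5,r\}$ and admits no properly colorable second edge, i.e., $d(u^*)=1$, contradicting the degree assumption. Without this bound on $4$-cycles through $v$, your concluding ``handful of explicit small graphs'' and the $3k/2$ edge count have no finite list of configurations to check, so the proposal as written does not close.
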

\begin{proof} Let $u\in V(G)$ such that $d(u)\geq 5$. From Lemma~\ref{tn1}, we may assume each vertex in $N(u)$ is of degree at most $3$. Let $S=\{u_1,u_2,u_3,u_4,u_5\}\subseteq N(u)$ and denote $c(uu_i)=i$, $i\in\{1,2,\dots ,5\}$. 

If $G[S]$ contains two independent edges, then we get a bow tie and we are done. It can be checked that $0\leq e(G[S])\leq 3$. We finish the proof considering cases on the existence of an edge in $G[S]$.
\medskip

\noindent \textit{Case 1: $e(G[S])\neq 0$.}

\smallskip

Suppose $e(G[S])=3$. Clearly the edges form a $3$-cycle. Without loss of generality, assume $G[S]=\{u_1u_2, u_1u_3, u_2u_3\}$. The vertices, $u_1,u_2$ and $u_3$ are saturated vertices. 

First we show that the vertices $u_4$ (similarly $u_5$) is a degree-$2$ vertex. In doing so if $u_4'\in N(u_4)\backslash \{u\}$, then by Remark~\ref{rf}~(ii), $d(u_4')=2$ and hence $u_4$ and $u_4'$ are adjacent degree-$2$ vertices and we are done.

Suppose not and let  $\{u_4',u_4''\}\subset N(u_4)\backslash \{u\}$ and $\{u_5',u_5''\}\subset N(u_5)\backslash \{u\}$. If $c(u_1u_2)=5$, then it is easy to check that $d(u_4)=2$, which is a contradiction. On the other hand if $c(u_1u_2)\neq 5$, again it is easy to check $d(u_5)=2$, which is again a contradiction. 
\medskip

\noindent \textit{Case 2: $e(G[S])=0$.}

\smallskip
For an obvious reason, we may assume that $d(u_i)=3$ for each $i\in\{1,2,3,4,5\}$. Let $S'=\bigcup_{i=1}^5 N(u_i)$. 

From Remark~\ref{fvc}, $G[S']$ contains no double-diamond with a common vertex $u$. In other words, if two $4$-cycles in $G[S']$ contain $u$, then the cycles share an edge $uu_i$, for some $i\in\{1,2,\dots,5\}$. Moreover, the number of $4$-cycles in $G[S']$ containing $u$ is at most $3$ and  this happens when the $4$-cycles share only three edges in $\{uu_1,uu_2,\dots, uu_5\}$. In other words, at least two edges are not contained in any $4$-cycle containing $u$. Without loss of generality, assume the edges be $uu_4$ and $uu_5$.

Notice that $d(u_4)=d(u_5)=3$. Let $N(u_4)\backslash\{u\}=\{u_4',u_4''\}$ and $N(u_5)\backslash\{u\}=\{u_5',u_5''\}$. Clearly $u_4',\ u_4'',\ u_5'$ and $u_5''$ are distinct vertices. It is easy verify that $c(u_4'u_4)$ and $c(u_4''u_4)$ are in $\{5,r\}$ and $c(u_5'u_5)$ and $c(u_5''u_5)$ are in $\{4,r\}$, where $r\neq 4,5$. Clearly we have a vertex $u^*\in\{u_1,u_2,u_3\}$, such that $c(uu^*)\notin\{4,5,r\}$. Moreover, $d(u^*)=1,$ which is a contradiction. This completes the proof of Theorem~\ref{cdf}.
\end{proof}
\begin{theorem}\label{df}
If $G$ contains two adjacent vertices $u$ and $w$ such that $N(u)\cap N(w)\neq \emptyset $ and $d(u)=d(w)=4$, then $G$ contains either a degree-$1$ vertex or two adjacent vertices of degree-2 or a nice-component.  
\end{theorem}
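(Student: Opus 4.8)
The plan is to exploit the fact that two adjacent degree-$4$ vertices sharing a common neighbor force many triangles on the edge $uw$, and then reduce to situations already handled. First I would set $c(uu_i)=i$ and note that since $N(u)\cap N(w)\neq\emptyset$ and $u\sim w$, Lemma~\ref{qw} (together with Remark~\ref{rf}(i), applied with the roles of $u$ and $w$) forces the common neighborhood to be large: because both have degree $4$ and $w$ is already one neighbor of $u$, the three remaining neighbors of $u$ other than $w$ must coincide with the three remaining neighbors of $w$ other than $u$. Thus $u$ and $w$ share exactly three common neighbors $u_1,u_2,u_3$, giving three $3$-cycles $uwu_iu$ sharing the edge $uw$. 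At that point Lemma~\ref{tr} applies directly and yields a nice-component or a degree-$1$ vertex, which finishes the proof.

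The subtle point is that Lemma~\ref{qw} as stated assumes the two vertices are \emph{non-adjacent}, whereas here $u\sim w$. So I would first argue the adjacent analogue: if $v\in N(u)\cap N(w)$ with $u\sim w$, $d(u)=d(w)=4$, then every neighbor of $w$ other than $u$ lies in $N(u)$. The cleanest route is to apply the fish- and bow-tie coloring schemes of Lemmas~\ref{c2} and~\ref{c4} to the triangles $uwv$ and any putative private neighbor, deriving a forbidden rainbow-$P_5$ exactly as in the proof of Lemma~\ref{qw}; the edge $uw$ plays the role that the path $uvw$ played in the non-adjacent case, and the extra edge $uw$ only adds a triangle, never a rainbow obstruction. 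Concretely, if $w$ had a neighbor $w_1\notin N(u)$, I would build a $P_5$ through $w_1$, $w$, a common neighbor, $u$, and a private neighbor of $u$, and show the coloring is forced to be rainbow.

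I expect the main obstacle to be the \emph{boundary case analysis} on how the common neighbors overlap, namely ruling out that $u$ and $w$ have only one or two common neighbors. With $d(u)=d(w)=4$ and one degree each consumed by the edge $uw$, each of $u,w$ has exactly three further neighbors; once I show every further neighbor of $w$ lies in $N(u)$ and vice versa, a counting argument over the three free slots forces all three to be shared, i.e.\ $N(u)\setminus\{w\}=N(w)\setminus\{u\}$. The only thing to check carefully is that these three shared neighbors are genuinely distinct (no multi-edges) and that none of them coincides with $u$ or $w$, both of which are immediate from simplicity and $u\sim w$.

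Once the three shared triangles on $uw$ are established, the remainder is a one-line invocation: the hypothesis of Lemma~\ref{tr} (\emph{$G$ contains three $3$-cycles sharing an edge}) is met, so $G$ contains a nice-component or a degree-$1$ vertex, completing the proof. I would present the argument in this order: (i) establish the adjacent version of Lemma~\ref{qw} via the coloring schemes; (ii) deduce $N(u)\setminus\{w\}=N(w)\setminus\{u\}$ and hence three triangles on $uw$; (iii) conclude by Lemma~\ref{tr}.
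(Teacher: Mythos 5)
Your reduction collapses at step (i): the \emph{adjacent analogue} of Lemma~\ref{qw} that your whole plan rests on is false, so you cannot force $N(u)\setminus\{w\}=N(w)\setminus\{u\}$ and hand everything to Lemma~\ref{tr}. A concrete counterexample is exactly the configuration the paper arrives at in its Case~2 ($|N(u)\cap N(w)|=1$): the $7$-vertex, $10$-edge graph of Figure~\ref{fig:nm}~(left), with edges $uw$, $uu_1$, $wu_1$, $uu_2$, $uu_3$, $ww_2$, $ww_3$, $u_2w_3$, $u_3w_2$, $u_3w_3$ and the proper coloring shown there. In that graph $u$ and $w$ are adjacent, both have degree $4$, they share exactly one common neighbor $u_1$, the graph is planar and rainbow-$P_5$-free, and yet $w$ has two private neighbors $w_2,w_3\notin N(u)$. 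So your claim that ``every neighbor of $w$ other than $u$ lies in $N(u)$'' cannot be proved — no amount of fish/bow-tie bookkeeping will establish a false statement. The mechanism of the failure is visible in the proof of Lemma~\ref{qw} itself: it builds the rainbow obstruction on the path $w_1wvuu_1$ with $uw\notin E(G)$, and when $uw$ is an edge the color $c(uw)$ is incident at both hubs, which changes the constraint system enough that the forcing breaks down. Your counting argument in step (ii) therefore never gets off the ground, and Lemma~\ref{tr} only covers the subcase $|N(u)\cap N(w)|=3$.

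What the theorem actually requires — and what the paper's proof does — is a direct analysis of the overlap sizes $|N(u)\cap N(w)|=2$ and $|N(u)\cap N(w)|=1$, neither of which is contradictory. In the overlap-$2$ case one shows (using Lemma~\ref{qw} on suitable \emph{non-adjacent} pairs, plus the fish coloring scheme of Lemma~\ref{c4}) that the private neighbor $u_3$ and then its neighbor $u_3'$ have degree at most $2$, producing two adjacent degree-$2$ vertices. In the overlap-$1$ case a long forced-coloring argument either produces a degree-$1$ vertex, or adjacent degree-$2$ vertices, or terminates in the $7$-vertex $10$-edge nice-component above — that is, the conclusion of the theorem is reached by exhibiting the alternative outcomes, not by excluding the small-overlap configurations. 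If you want to salvage your outline, you must replace step (i) with these two case analyses; the one-line invocation of Lemma~\ref{tr} is only the easy first sentence of the real proof.
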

\begin{proof}
Let $N(u)\backslash \{w\}=\{u_1,u_2,u_3\}$ and $N(w)\backslash\{u\}=\{w_1,w_2,w_3\}$. If $|N(u)\cap N(w)|=3$, then we are done by Lemma~\ref{tr}. Next we distinguish three cases depending on the size of $N(u)\cap N(w)$.
\medskip

\noindent \textit{Case 1: $|N(u)\cap N(w)|=2$.}

\smallskip

Suppose $(u_1, w_1)$ and $(u_2,w_2)$ are identical pair of vertices. Observe that $u_3\notin N(u_1)$ and also $u_3\notin N(u_2)$, otherwise it is easy to find a bow tie with one of its degree-$2$ vertex which is not saturated.

First we show $d(u_3)\leq 2$. Indeed, suppose $d(u_3)=3$. Since $N(u_3)\cap N(w)\neq\emptyset,$ and $d(w)=4$, then $N(u_3)\subset N(w)$. This implies, there is a vertex in $N(u_3)\backslash\{u\}$ adjacent either $u_1$ or $u_2$, but this is a contradiction. 

Let $u_3'\in N(u_3)\backslash\{u\}$. Clearly $u_3'\notin N(u)$. Here we show that $d(u_3')\leq 2$ and finish the proof of this case. Suppose $d(u_3')=3$. Since $N(u)\cap N(u_3')\neq\emptyset$ and $d(u)=4$, then $N(u_3')\subset N(u)$.

Observe that $N(u_3')$ can not be $\{w,u_1\}$ or $\{w,u_2\}$, because in both cases we create a bow tie. Thus, $N(u_3')=\{u_1,u_2\}$. From this we get a medium-pair $u_3'u_3u,\ u_3'u_2u$ and $u_3'u_1u$. Since $d(u)=4$, we must use four-coloring scheme for the medium-pair. For this we have two coloring permutations for the cherries. Let the coloring be $c(uu_1)=c(u_2u_3')=2,\ c(u_1u_3')=c(uu_2)=1,\  c(uu_3)=3$ and $c(u_3u_3')=4$. However this coloring results the $4$-cycle of $(u_1wu,uu_2u_3'u_3)$-fish to receive at least three colors, which is a contradiction to the coloring scheme of a fish graph. Similar argument can be given for the other color permutation of the cherries. 
\medskip

\noindent \textit{Case 2: $|N(u)\cap N(w)|=1$.}

\smallskip

Let $(u_1,w_1)$ be the identical pair of vertices, and $c(u_1w)=1, c(u_1u)=2$ and $c(uw)=3$. Obviously one of the edge in $\{uu_2,uu_3\}$ must receive a brand new color. Let $c(uu_2)=4$. From the known colors of the $P_5$ path $w_2wu_1uu_2$, $c(w_2w)\in\{2,4\}$. From the color options we have and $d(w)=4$, then we may assume $c(ww_2)=4$ and $c(ww_3)=2$. Moreover, it can be seen that $c(uu_3)=1.$ 

Observe that $u_2\notin N(u_3)$, otherwise get a bow tie. Also observe that $u_2w_2\notin E(G)$. Indeed, suppose such an edge exists. From the $P_5$ paths $u_1wuu_2w_2,\ wu_1uu_2w_2$ and $u_1uww_2u_2$, $c(u_2w_2)$ is in $\{1,3\}, \{1,2\}$ and $\{2,3\}$ respectively, and this is a contradiction. In addition to these, it can be seen that $u_2u_1\notin E(G)$. Indeed, suppose such an edge exist. From the $P_5$ path $w_2wuu_1u_2$,\ $c(u_1u_2)\in\{3,4\}$. It can not be $4$ because of the incident edge $uu_2$ which is colored with 4. But this results a rainbow-$P_5$ paths, namely $uu_2u_1ww_3$, where the edge colors from left to right are respectively $4,3,1$ and $2$.

Suppose $x\in V(G)\backslash\{w_3\}$ such that $x\in N(u_2)$. From the $P_5$ paths $xu_2uu_1w,\ xu_2uwu_1$ and $xu_2uww_3$, $c(xu_2)$ is in $\{1,2\}, \{1,3\}$ and $\{2,3\}$, which is a contradiction. 
Thus if $u_2w_3\notin E(G)$, then we are done as $d(u_2)=1$. Notice that from symmetry, what we have so far for $u_2$ hold for $w_2$ too.

To complete the proof, we assume that both $u_2w_3, w_2u_3\in E(G)$. Clearly $c(w_3u_2)=1$ and $c(u_3w_2)=2$. Observe that there is no $x\in V(G)\backslash\{u_1,u_3\}$ such that $xw_3\in E(G)$. Indeed, suppose such a vertex and edge exist. Then from the $P_5$ paths $xw_3u_2uu_1,\ xw_3u_2uw$ and $xw_3wuu_3$, $c(xw_3)$ is respectively in $\{2,4\}$, $\{3,4\}$ and $\{1,3\}$, which is a contradiction. 

Let $x\in V(G)\backslash\{u_3,w_3\}$ such that $xu_1\in E(G)$. From the $P_5$ paths $xu_1wuu_2$ and $xu_1uu_2w_3$, we have $c(xu_1)=4$. Now we can show that $d(x)=1$ and we are done. Indeed, first notice that $xw_3,\ xu_3,\ xu_2,\ xw_2\notin E(G)$. Next, let $y\in V(G)\backslash \{u_1\}$ and $yx\in E(G)$. Remember $y\notin N(u_1)$, otherwise we create a bow tie. From the $P_5$ paths $yxu_1uw,\ yxu_1wu$ and $yxu_1uu_3$, $c(yx)$ is respectively in $\{2,3\}, \{1,3\}$ and $\{1,2\}$, which is a contradiction. 

Observe that if $w_3u_1\in E(G)$, then it can be shown that both $u_3u_1$ and $w_3u_3$ are not in $E(G)$. Indeed, the former is true, as otherwise a bow tie will be created. To show the latter case, suppose note and  $u_3w_3\in E(G)$. From the $P_5$ path $wuu_1w_3u_2$, $c(u_1w_3)=3$. Moreover, from the $P_5$ path $ww_2u_3w_3u_2$, $c(u_3w_3)=4$. But the colors result a rainbow-$P_5$ path $wu_1w_3u_3w_2$, where the edge colors from left to right are respectively $1,3,4$ and $2$.  Therefore if $w_3u_1\in E(G)$, $u_3$ and $w_2$ are adjacent degree-$2$ vertices and we are done. With similar argument if $u_3u_1\in E(G)$, then $u_2$ and $w_3$ are adjacent degree-$2$ vertices and we are done. 

If both $u_3u_1, w_3u_1\notin E(G)$, then we may assume that $w_3u_3\in E(G)$. Otherwise, we have adjacent degree-2 vertices and we are done. If such an edge exists, the graph induced by $\{u,w,u_1,u_2,u_3,w_2,  w_3\}$ is a $7$-vertex and $10$-edge nice-component, see the graph in Figure~\ref{fig:nm}~(left). This completes the proof of Theorem~\ref{df}.
\end{proof}

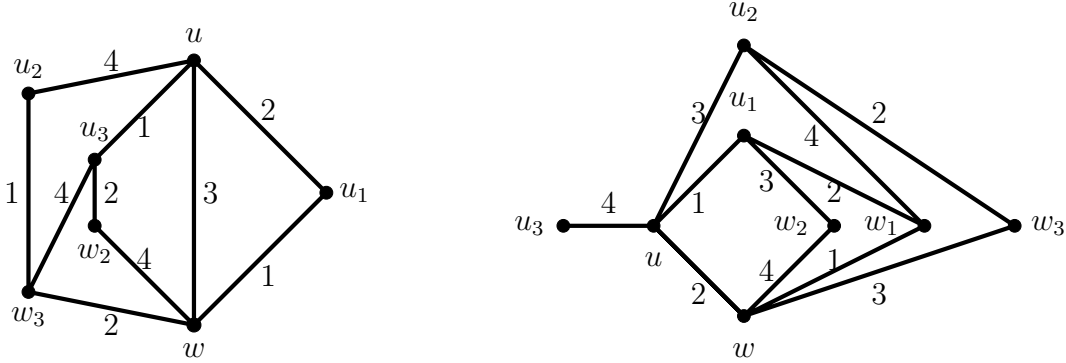
\begin{figure}[ht]
\centering
\begin{tikzpicture}[scale=0.22]
\draw[ultra thick](0,8)--(0,-8)--(8,0)--(0,8)--(-6,2)--(-6,-2)--(0,-8)--(-10,-6)--(-10,6)--(0,8)(-10,-6)--(-6,2);
\draw[fill=black](0,8)circle(11pt);
\draw[fill=black](0,-8)circle(12pt);
\draw[fill=black](8,0)circle(11pt);
\draw[fill=black](-6,2)circle(11pt);
\draw[fill=black](-6,-2)circle(11pt);
\draw[fill=black](-10,6)circle(11pt);
\draw[fill=black](-10,-6)circle(11pt);
\node at (4.5,5) {$2$};
\node at (4.5,-5) {$1$};
\node at (1,0) {$3$};
\node at (-5,0) {$2$};
\node at (-3,4) {$1$};
\node at (-3,-4) {$4$};
\node at (-5,8) {$4$};
\node at (-5,-8) {$2$};
\node at (-11,0) {$1$};
\node at (-8,0) {$4$};
\node at (0,9.5) {$u$};
\node at (0,-9.5) {$w$};
\node at (9.7,0) {$u_1$};
\node at (-6,3.7) {$u_3$};
\node at (-6,-3.7) {$w_2$};
\node at (-10,7.5) {$u_2$};
\node at (-10,-7.5) {$w_3$};
\end{tikzpicture}\qquad\qquad
\begin{tikzpicture}[scale=0.3]
\draw[ultra thick](-4,0)--(0,-4)(-4,0)--(0,4)(-4,0)--(0,8)(0,8)--(12,0)(0,8)--(8,0)(0,4)--(8,0)(0,4)--(4,0)(0,-4)--(-4,0)(0,-4)--(4,0)(0,-4)--(8,0)(0,-4)--(12,0)(-8,0)--(-4,0);
\draw[fill=black](-8,0)circle(8pt);
\draw[fill=black](-4,0)circle(8pt);
\draw[fill=black](4,0)circle(8pt);
\draw[fill=black](8,0)circle(8pt);
\draw[fill=black](12,0)circle(8pt);
\draw[fill=black](0,8)circle(8pt);
\draw[fill=black](0,4)circle(8pt);
\draw[fill=black](0,-4)circle(8pt);
\node at (-2,5) {$3$};
\node at (-2,1) {$1$};
\node at (-2,-3) {$2$};
\node at (-6,1) {$4$};
\node at (1,2) {$3$};
\node at (1,-2) {$4$};
\node at (4,1.5) {$2$};
\node at (4,-1.5) {$1$};
\node at (6,5) {$2$};
\node at (6,-3) {$3$};
\node at (3,4) {$4$};
\node at (-9.5,0) {$u_3$};
\node at (-4,-1.5) {$u$};
\node at (0,-5.5) {$w$};
\node at (2.1,0) {$w_2$};
\node at (6.1,0) {$w_1$};
\node at (13.5,0) {$w_3$};
\node at (0,5.5) {$u_1$};
\node at (0,9.5) {$u_2$};
\end{tikzpicture}
\caption{Possible subgraphs of $G$.}
\label{fig:nm}
\end{figure}

\section{Further refinements and finalizing the proof}\label{cd}
We need the following two theorems to complete our proof. Both theorem gives an estimate of the number of degree-$2$ vertices in the neighbor or second neighbor of a given degree-$4$ vertex.
\begin{theorem}\label{dc}
If $G$ contains two adjacent vertices $u$ and $w$ such that $N(u)\cap N(w)=\emptyset$ and $d(u)=d(w)=4$, then either there is a degree-$1$ vertex or two adjacent degree-2 vertex or there are at least two degree-2 vertices in $N(u)$ and at least two degree-2 vertices in $N(w)$. 
\end{theorem}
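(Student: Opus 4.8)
The plan is to assume neither of the first two alternatives holds --- that is, $G$ has no vertex of degree $1$ and no two adjacent vertices of degree $2$ --- and then deduce the third. Write $N(u)=\{w,u_1,u_2,u_3\}$ and $N(w)=\{u,w_1,w_2,w_3\}$; by hypothesis these two sets are disjoint. Since the hypothesis is symmetric in $u$ and $w$, it suffices to show that at least two of $u_1,u_2,u_3$ have degree $2$ and then repeat the argument with the roles of $u$ and $w$ exchanged. So I would suppose, for contradiction, that two of them --- say $u_1$ and $u_2$ --- have degree at least $3$, and normalise the colouring by $c(uu_i)=i$ for $i\in\{1,2,3\}$ and $c(uw)=4$.

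The first key step is a path lemma: for each $i$ and each neighbour $a$ of $u_i$ with $a\notin\{u\}\cup N(w)$, the three paths $a\,u_i\,u\,w\,w_j$ (for $j=1,2,3$) are genuine $5$-vertex paths, and rainbow-$P_5$-freeness forces $c(au_i)=c(uw)$. Indeed, the colours $c(au_i),\,c(uu_i),\,c(uw),\,c(ww_j)$ are automatically distinct except possibly for the coincidences $c(au_i)=c(uw)$, $c(au_i)=c(ww_j)$, or $c(uu_i)=c(ww_j)$; as $c(ww_1),c(ww_2),c(ww_3)$ are three distinct colours, the latter two coincidences can hold for at most two values of $j$, so unless $c(au_i)=c(uw)$ one of the paths is rainbow. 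Since $c$ is proper, this shows each $u_i$ has \emph{at most one} neighbour outside $\{u\}\cup N(w)$.

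Consequently a neighbour of $u$ of degree at least $3$ must have a neighbour in $N(w)$, i.e.\ it shares a neighbour with the non-adjacent degree-$4$ vertex $w$. If such a vertex had degree $4$, then Theorem~\ref{nm} applied to it and $w$ would already produce a degree-$1$ vertex or a nice-component, contradicting our reduction; hence $d(u_1)=d(u_2)=3$, and Lemma~\ref{qw} forces $N(u_1),N(u_2)\subseteq N(w)$. Thus each of $u_1,u_2$ joins $u$ to exactly two vertices of $\{w_1,w_2,w_3\}$, forming a medium-pair with $w$. As any two $2$-element subsets of a $3$-element set meet, there is a vertex $w_\ell$ adjacent to both $u_1$ and $u_2$.

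To finish I would analyse the medium-pair on the hubs $u_1$ (or $u_2$) and $w$ via Lemma~\ref{l5}. With the colours $c(uu_1)=1$, $c(uu_2)=2$, $c(uw)=4$ fixed, the two \emph{spare} colours $2$ and $3$ present on the edges $uu_2$ and $uu_3$ let me extend the sub-path $u\,u_1\,w_\ell\,w$ to a rainbow $P_5$, namely $u_3\,u\,u_1\,w_\ell\,w$ or $u_2\,u\,u_1\,w_\ell\,w$, in every colour scheme of Lemma~\ref{l5} \emph{except} the degenerate four-colouring that places exactly the colours $\{2,3\}$ on the cherry through $w_\ell$. I expect this residual scheme to be the main obstacle. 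It is resolved by invoking the third neighbour $u_3$: in the degenerate configuration the eight vertices $\{u,w,u_1,u_2,u_3,w_1,w_2,w_3\}$ carry few enough edges that $u_3$ is either a degree-$1$ vertex --- in which case we are done --- or has a neighbour $z$ outside this set, and then tracking $c(u_3z)$ along the paths $z\,u_3\,u\,u_i\,w_\ell$ and $z\,u_3\,u\,u_i\,w_{\ell'}$ (for the two $w$-neighbours $w_\ell,w_{\ell'}$ of $u_i$) pins $c(u_3z)$ down to incompatible values, yielding a rainbow $P_5$. This contradiction shows at least two of $u_1,u_2,u_3$ have degree $2$; the symmetric argument gives the same for $N(w)$, completing the proof.
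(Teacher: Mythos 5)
Your opening moves are sound and, in substance, they reproduce the paper's route: your path lemma (every neighbour $a$ of $u_i$ outside $\{u\}\cup N(w)$ satisfies $c(au_i)=c(uw)$, so each $u_i$ has at most one such neighbour), the use of Theorem~\ref{nm} to cap the degrees of $N(u)\setminus\{w\}$ and $N(w)\setminus\{u\}$ at $3$, the application of Lemma~\ref{qw} to force $N(u_1),N(u_2)\subseteq N(w)$, and the exclusion of the three-colouring scheme of Lemma~\ref{l5} because a hub of the medium-pair has a fourth neighbour, are all correct and match the paper's proof (the paper takes the hubs of its medium-pair to be $u$ and the common vertex $w_1$ rather than your $u_1$ and $w$, an immaterial difference).

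The endgame, however, has a genuine gap. In the residual colouring you assert that $u_3$ is ``either a degree-$1$ vertex or has a neighbour $z$ outside'' the set $\{u,w,u_1,u_2,u_3,w_1,w_2,w_3\}$, omitting the possibility that $u_3$'s second neighbour lies \emph{inside} that set, namely $u_3w_2\in E(G)$ or $u_3w_3\in E(G)$ (planarity forbids both, but not one). This case is not a removable oversight: with $u_1$ adjacent to $w_1,w_2$, $u_2$ adjacent to $w_1,w_3$ and $u_3$ adjacent to $w_2$, the resulting $8$-vertex, $12$-edge graph carries a proper rainbow-$P_5$-free colouring --- it is exactly the configuration drawn in Figure~\ref{fig:nm} (right) --- so no rainbow $P_5$ can be extracted from it, and in it there is no degree-$1$ vertex, no adjacent degree-$2$ pair, and only \emph{one} degree-$2$ vertex in each of $N(u)$ and $N(w)$. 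Consequently the contradiction you aim for is unreachable: the paper closes this case by showing every vertex of the configuration is saturated, so it forms an $8$-vertex, $12$-edge nice-component ($12=\lfloor 3\cdot 8/2\rfloor$) handled by induction --- an outcome your reduction, which excluded only degree-$1$ vertices and adjacent degree-$2$ pairs, does not allow. The same slippage appears earlier when you dismiss the nice-component alternative of Theorem~\ref{nm} as ``contradicting our reduction''. A secondary, fixable inaccuracy: the extensions $u_2uu_1w_\ell w$ and $u_3uu_1w_\ell w$ are also blocked by the four-colouring scheme that crosses the colours $\{1,4\}$ of $uu_1$ and $uw$ onto the $w_\ell$-cherry (giving $c(w_\ell w)=1$), not only by the scheme placing $\{2,3\}$ there, so your residual case is larger than described; and the claimed pinning of $c(u_3z)$ ``to incompatible values'' is left unverified in some branches. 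To repair the proof you must add the nice-component (or an equivalent induction) escape to the statement of your reduction and treat the inside-attachment of $u_3$ explicitly, as the paper does.
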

\begin{proof}
If two vertices in $N(u)$ or $N(w)$ are adjacent, it is easy to check that the vertices are degree-$2$ vertices and we are done. Theorem~\ref{nm}, we may assume that vertices $N(u)\backslash\{w\}$ and $N(w)\backslash\{u\}$ are with degree at most $3$.

Let $N(u)\backslash\{w\}=\{u_1,u_2,u_3\}$ and $N(w)\backslash\{u\}=\{w_1,w_2,w_3\}$. Suppose $u_1$ and $u_2$ be degree-$3$ vertices. It is enough to show that $G$ contains either a degree-$1$ vertex or two adjacent-vertices of degree-$2$ or a nice-component. 

For clear reasons $N(u_1)\backslash \{u\}\subset \{w_1,w_2,w_3\}$ and $N(u_2)\backslash \{u\}\subset \{w_1,w_2,w_3\}$. Without loss of generality assume $N(u_1)\backslash \{u\}=\{w_1,w_2\}$ and $N(u_2)\backslash\{u\}=\{u_1,u_3\}$. This creates a medium-pair $uww_1,uu_1w_1$ and $uu_2w_1$, see Figure~\ref{fig:nm}~(right). 

Since $d(u)=4$, we use the four-coloring scheme for the medium-pair. Consider the coloring $c(uu_1)=c(ww_1)=1, c(uw)=c(u_1w_1)=2, c(uu_2)=3, c(u_2w_1)=4$ and $c(uu_3)=4$. It can be checked that $c(ww_2)$ and $c(w_2u_1)$ are in $\{3,4\}$, and we may assume $c(u_1w_2)=3$ and $c(ww_2)=4$. From the $P_5$ paths $w_3ww_2u_1u$ and $w_3ww_2u_1w_1$, $c(w_3w)=3$. Again from the $P_5$ path $w_1u_2w_3wu$, $c(u_2w_3)=2$. 

Denote $S=\{u,w,u_1,u_2, w_1,w_2,w_3\}$. Suppose that there is $x\in V(G)\backslash S$ such that $xu_3\in E(G)$. Then from the $P_5$ paths $xu_3uu_1w_1, xu_3uu_1w_2$ and $xu_3uww_3$, $c(xu_3)$ is in $\{1,2\},\ \{1,3\}$ and $\{2,3\}$ respectively, and this is a contradiction. Therefore there is no vertex $x\in V(G)\backslash S$, which is adjacent to $u_3$. But it is still possible that $u_3$ can be adjacent with $w_2$ or $w_3$.  However from the planarity of $G$ either $u_3w_2$ or $u_3w_3$, but not both, is in $G$. If both are not in $G$, then $d(u_3)=1$, and we are done. On the other hand, if one of the edges exists,  then it can be seen that $u_3$ is a saturated vertex  and  $G[S]$ is an $8$-vertex and $12$-edge nice-component, and we are done. This completes the proof of Theorem~\ref{dc}.
\end{proof}
\begin{theorem}\label{gb}
Let $u$ be a degree-$4$ vertex in $G$ such that all its neighbors are with degree at most $3$. Then there is a degree-$1$ vertex or at least two degree-$2$ vertices in $N(u)\cup N_2(u)$. 
\end{theorem}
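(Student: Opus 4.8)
The plan is to argue by contradiction. Assume $G$ has no degree-$1$ vertex (otherwise we are already in the first alternative of the statement), and, as throughout these sections, that $G$ has no two adjacent degree-$2$ vertices and no nice-component (either would finish the whole proof by induction); by Theorem~\ref{cdf} we also have $\Delta(G)\le 4$. Suppose, contrary to the claim, that $N(u)\cup N_2(u)$ contains at most one degree-$2$ vertex. Write $N(u)=\{u_1,u_2,u_3,u_4\}$ and, up to relabelling, $c(uu_i)=i$. Since at most one $u_i$ can be a degree-$2$ vertex, at least three of them have degree exactly $3$; I assume $d(u_1)=d(u_2)=d(u_3)=3$. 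The goal is to exhibit a second degree-$2$ vertex somewhere in $N(u)\cup N_2(u)$, or else a degree-$1$ vertex, contradicting the assumption.

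First I would pin down the edges inside $N(u)$. Two independent such edges would create a bow tie with apex $u$, and then Lemma~\ref{c3} would force the four outer vertices to be saturated of degree $2$, contradicting $d(u_1)=d(u_2)=d(u_3)=3$. Hence $G[N(u)]$ is an intersecting family of edges, i.e.\ a star or a triangle. The triangle case yields a $K_4$ on $\{u,u_1,u_2,u_3\}$, making these three vertices saturated of degree $3$, so that the component leaves $\{u,u_1,u_2,u_3\}$ only through $u_4$; analysing the at most two edges leaving $u_4$ together with the rainbow-$P_5$ constraint then produces a degree-$1$ vertex, a second degree-$2$ vertex, or a small nice-component, a contradiction in every case. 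So I may assume $G[N(u)]$ is a star.

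For the main case I would control the second neighborhood. If $x\in N_2(u)$ is adjacent to a degree-$3$ neighbor $u_i$, then $x$ and $u$ are non-adjacent and share the neighbor $u_i$; were $d(x)=4$, Theorem~\ref{nm} would give a degree-$1$ vertex or a nice-component, both excluded, so $d(x)\le 3$, and by Lemma~\ref{qw} and Remark~\ref{rf} the relevant neighborhoods are tightly constrained. Next I would split according to how $u_1,u_2,u_3$ attach to $N_2(u)$: a second neighbor common to two of them closes a $4$-cycle through $u$, a second neighbor common to all three produces a medium-pair ($K_{2,3}$) with poles $u$ and $x$, and two $4$-cycles through a single edge $uu_i$ produce a double diamond. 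Planarity fixes a cyclic order of $u_1,u_2,u_3,u_4$ around $u$ and severely limits which second neighbors can be shared, while the coloring schemes of Lemmas~\ref{l5}, \ref{ws}, \ref{c4} and \ref{fxc} (with Remark~\ref{fvc}) force the two edges leaving each degree-$3$ neighbor to re-use colors from $\{1,2,3,4\}$. In each resulting configuration this color-forcing caps the degrees of the relevant second neighbors at $2$, and counting them yields at least two degree-$2$ vertices in $N(u)\cup N_2(u)$ (or exposes a degree-$1$ vertex), the desired contradiction.

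The main obstacle is the simultaneous bookkeeping of planarity and the rainbow-$P_5$ color-forcing across these sub-cases: planarity bounds how many second neighbors $u_1,u_2,u_3$ may share and imposes a rotation order, while the coloring schemes dictate which colors the outgoing edges must take, and the two constraints must be reconciled configuration by configuration. The delicate point is converting adjacency and coloring restrictions into genuine \emph{degree-$2$ counts} in $N_2(u)$ rather than mere non-adjacency statements; here the double-diamond lemma (Lemma~\ref{fxc}) and the fish scheme (Lemma~\ref{c4}) do the decisive work, since they are precisely what prevents a shared second neighbor from acquiring a third, differently-colored edge.
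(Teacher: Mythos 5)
Your skeleton coincides with the paper's own proof: exclude two independent edges in $G[N(u)]$ via the bow tie (Lemma~\ref{c3}), then split according to the edges inside $N(u)$ --- the paper runs exactly the cases $e(G[N(u)])=3,2,1,0$ --- and in each case use Lemma~\ref{qw}, Theorem~\ref{nm}, and the fish/medium-pair/double-diamond coloring schemes to cap the degrees of second neighbors. But as a proof the proposal has a genuine gap: the two steps you yourself flag as decisive are asserted rather than carried out. In the triangle case you claim that analysing the edges leaving $u_4$ ``produces a degree-$1$ vertex, a second degree-$2$ vertex, or a small nice-component'' with no argument; the actual (short) argument is that any $x\in N(u_4)\setminus\{u\}$ with $d(x)\geq 3$ would satisfy $N(x)\subseteq N(u)$ by Lemma~\ref{qw} (or Remark~\ref{rf}~(i)), forcing $x$ to be adjacent to one of the saturated vertices $u_1,u_2,u_3$, so $d(x)\leq 2$, and counting $u_4$ and/or its outer neighbors then yields the two required degree-$2$ vertices. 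More seriously, in the star cases your key sentence ``in each resulting configuration this color-forcing caps the degrees of the relevant second neighbors at $2$'' \emph{is} the content of the theorem, and it is not routine: the paper must fix the two inequivalent four-coloring permutations of the medium-pair separately, propagate forced colors along explicitly named $P_5$'s (e.g.\ deducing $c(zu_2)$ for a putative third neighbor $z$ of $u_2$, then showing any further neighbor $z'$ of $z$ receives three mutually incompatible color-membership constraints, whence $d(z)=1$), and repeat this for $u_2$ and $u_4$ in both permutations; likewise the fish subcase of $e(G[N(u)])=1$ branches into four genuinely different outcomes according to whether $x$ is adjacent to $u_1$, to $u_2$, to both, or to neither. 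Nothing in your text substitutes for these deductions, and their success is not a priori evident --- they are where the theorem could fail.

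A second, smaller omission: you never address why the degree-$2$ vertices you ``count'' are distinct. When two of the degree-$3$ neighbors of $u$ each contribute one degree-$2$ vertex from their neighborhoods, these could a priori coincide in a single degree-$2$ vertex adjacent to both; one needs either a disjointness hypothesis such as $N(u_3)\cap N(u_4)=\{u\}$ (as in the paper's treatment of $e(G[N(u)])=1$), or the observation that a degree-$2$ vertex lies in at most two of the sets $N(u_1),N(u_2),N(u_3)$, so three such sets still force two distinct vertices (as in the paper's final case). Your contradiction framing and your use of the standing inductive assumptions (no degree-$1$ vertex, no adjacent degree-$2$ pair, no nice-component) are legitimate and match how the paper invokes Theorem~\ref{nm}; but without the configuration-by-configuration color analysis, the proposal is a plan for the paper's proof rather than a proof.
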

\begin{proof}
Let $N(u)=\{u_1,u_2,u_3,u_4\}$. We distinguish the the following three cases based on the $e(G[N(u)])$. Remember, we do not have two independent edges in $G[N(u)]$. Otherwise, we $G$ contains a bow tie and we are done. Notice that $0\leq e(G[N(u)])\leq 3$.
\medskip

\noindent \textit{Case 1: $e(G[N(u)])=3$.}

\smallskip

In this case, the edges form $3$-cycle. Considering the degree condition of the neighbors of $u$, $G[N[u]]$ is a $K_4$ incident to an edge. Let the $3$-cycle be $u_1u_2u_3u_1$. 

Notice that $2\leq d(u_4)\leq 3$. Let $x\in V(G)\backslash \{u\}$ such that $xu_4\in E(G)$. Clearly $d(x)\leq 2$. Otherwise, $N(x)\subset \{u_1,u_2,u_3\}$, which is a contradiction as the vertices $u_1,u_2$ and $u_3$ are already saturated from the degree condition of neighbors of $u$. This completes the proof of Case 1. 
\medskip

\noindent \textit{Case 2: $e(G[N(u)])=2$.}

\smallskip

Here the edges must be incident to each other, and let $G[N(u)]=\{u_1u_2,u_1u_3\}$. Clearly, $u_1$ is saturated and $u_4u_3, u_4u_2\notin E(G)$, as a bow tie with unsaturated degree-2 vertex will be created. 

Suppose that $x\in N(u_4)\backslash \{u\}$. One can show that $d(x)\leq 2$. Suppose $d(x)=3$ and $N(x)\backslash \{u_4\}=\{x_2,x_3\}$. By Lemma~\ref{qw} and since $u_1$ is saturated, $\{x_1,x_2\}=\{u_2,u_3\}$. We may assume $(x_2,u_2)$ and $(x_3,u_3)$ are identical pair of vertices.
We have two fish graphs $(u_3u_1u,uu_2xu_4)$-fish and $(u_1u_2u,uu_3xu_4)$-fish. From the coloring scheme of the fish graph, let the first fish graph is colored as $c(uu_1)=1,\ c(uu_3)=3,\  c(uu_2)=c(u_4x)=2$ and $c(uu_4)=c(xu_2)=4$. With this the $4$-cycle of the second fish graph takes at least $3$ colors and this violates the coloring scheme of a fish graph.

Therefore, if $d(u_4)=2$, then its neighbor other than $u$ should be a degree-$2$ vertex, and we are done. Moreover if $d(u_4)=3$, then the two vertices in $N(u_4)\backslash \{u\}$ are degree-$2$ vertices, and again we are done. This completes the proof of Case 2. 
\medskip

\noindent \textit{Case 3: $e(G[N(u)])=1$.}

\smallskip

Let $G[N(u)]=u_1u_2$. First we consider the case that there is $x\in V(G)\backslash\{u\}$ such that  $x\in N(u_3)\cap N(u_4)$. With this we get $(u_1u_2u,uu_3xu_4)$-fish, and from the fish-coloring scheme assume $c(uu_3)=c(xu_4)=3,\ c(uu_4)=c(xu_3)=4,\ c(uu_1)=1,\ c(uu_2)=2$ and $c(u_1u_2)=3$. 

It is easy to check that $u_4$ is a saturated vertex, i.e., $d(u_4)=2$. Moreover, there is no vertex in $y\in V(G)\backslash \{u_1,u_2\}$ such that $yx\in E(G)$. If both $xu_2,xu_1\notin E(G),$ then we are done, as $\{x,u_4\}\subset N(u)\cup N_2(u)$. If both $xu_2,xu_1\in E(G)$, it can be checked that $u_3$ is a saturated vertex, and hence $\{u_3,u_4\}\subset N(u)\cup N_2(u)$.

Without loss of generality, suppose $xu_2\in E(G)$ and $xu_1\notin E(G)$. If $u_2$ is saturated, then we are done, as $\{u_1,u_4\}\subset N(u)\cup N_2(u)$. We assume that there is $y\in V(G)\backslash \{u,u_2\}$ such that $yu_1\in E(G)$. From the $P_5$ paths $yu_1uu_4x$ and $yu_1u_2xu_3$, $c(yu_1)=4$. Now we verify that $d(y)\leq 2$. Indeed, let $y'\in V(G)\backslash \{u_1\}$ so that $y'y\in E(G)$. From the $P_5$ paths $y'yu_1uu_2$ and $y'yu_1u_2x$, $c(y'y)=1$. Therefore, from the color option $d(y)=2$ and moreover $\{y,u_4\}\subset N(u)\cup N_2(u)$ and we are done.   

To complete the proof we assume $N(u_3)\cap N(u_4)=\{u\}$. We may assume that either $u_3$ or $u_4$ is a degree-$3$ vertex, otherwise we are done. 

Let $d(u_4)=3$ and $N(u_4)\backslash\{u\}=\{x,y\}$. We complete the proof by showing either $x$ or $y$ is a degree-$2$ vertex. Indeed, Suppose for $d(x)=d(y)=3$. Considering the degree condition of the neighbors of $u$, it is easy to check that $xy\notin E(G)$.

Let $N(x)\backslash \{u_4\}=\{x_1,x_2\}$ and $N(y)\backslash \{u\}=\{y_1,y_2\}$. Observe that both $\{x_1,x_2\}$ and $\{y_1,y_2\}$ are $\{u_1,u_2\}$, since $N(u_4)\cap N(u_3)=\{u\}$. But this force $d(u_1)=4$ and $d(u_2)=4$, which is a contradiction to the assumption of the degree condition of the neighbors of $u$. This completes the proof of Case 3. 
\medskip

\noindent \textit{Case 4: $e(G[N(u)])=0$.}

\smallskip

Clearly there is a degree-$3$ vertex in $N(u)$, otherwise we are done. Let $d(u_1)=3$ and $x,y\in N(u_1)\backslash \{u\}$. Since $e(G[N(u)])=0$, both $x, y\notin N(u)$. 

Suppose $x$ and $y$ are both degree-$3$ vertices. It can be checked that $xy\notin E(G)$. Let $N(x)\backslash\{u_1\}=\{x_1,x_2\}$ and $N(y)\backslash \{u_1\}=\{y_1,y_2\}$.  Clearly $N(u)=\{x_1,x_2,y_1,y_2\}$. Without loss of generality assume that $(x_1,u_2)$, $(x_2,u_3)$, $(y_1,u_3)$ and $(y_2,u_4)$ are the identical pairs, see Figure~\ref{daq}.

Consider the medium-pair with cherries $uu_1x, uu_2x$ and $uu_3x$. Since $d(u)=4$, we need to use the four-coloring scheme for the cherries. We have to consider two possible coloring permutations to finish the proof. In both cases, what we prove is $u_2$ and $u_4$ are degree-$2$ vertex or otherwise, a vertex in $N(u_2)\backslash \{u,x\}$ and $N(u_4)\backslash\{u,y\}$ is a degree-$1$ vertex. 

Consider the four-coloring scheme given by $c(uu_3)=c(u_1x)=1,\ c(u_3x)=c(uu_1)=2,\ c(uu_2)=3$ and $c(u_2x)=4$, as shown in the Figure~\ref{daq}~(left). Clearly $c(uu_4)=4$ and $c(u_1y)$ and $c(u_3y)$ are $\{3,4\}$. Without loss of generality, assume $c(u_1y)=3$ and $c(yu_3)=4$.  From the $P_5$ path $u_1yu_4uu_3$, $c(yu_4)=1$

Let $u_2$ is not a degree-2 vertex and $z\in N(u_2)\backslash\{u,x\}$. From the $P_5$ paths $zu_2xu_3u$ and $zu_2xu_1y$, $c(zu_2)=1$. Let again $z'\in N(z)\backslash \{u_2\}$. From the planarity of $G$ and the degree condition of neighbors of $u$, $z'$ can not be any of the vertex in $\{x,y,u_3,u_4\}$. From the $P_5$ paths $z'zu_2xu_3$, $z'zu_2uu_1$ and $z'zu_2uu_4$, $c(z'z)$ is in $\{2,4\}$, $\{2,3\}$ and $\{3,4\}$ which is a contradiction. Therefore, $d(z)=1$ and we are done.

Suppose $u_4$ is not a degree-$2$ vertex and let $t\in N(u_4)\backslash\{u,y\}$. From the $P_5$ paths, $tu_4uu_3x$ and $tu_4yu_1u$, $c(tu_4)=2$. Let $t'\in N(t)\backslash \{u_4\}$. From the $P_5$ paths $t'tu_4yu_1, t'tu_4yu_3$ and $t'tu_4uu_2$, $c(t't)$ is in $\{1,3\}, \{1,4\}$ and $\{3,4\}$ respectively,  which is a contradiction. Therefore, $d(t)=1$, and we are done.

The other possible four coloring-scheme of the medium-pair is when $c(uu_1)=c(u_2x)=1, c(uu_2)=c(uu_2)=c(u_1x)=2, c(uu_3)=3$ and $c(u_3x)=4$, see Figure~\ref{daq}~(right). Clearly $c(u_1y)\in\{3,4\}$. Without loss of generality assume $c(u_1y)=3$. From the $P_5$ path $u_2xu_3yu_1$, $c(u_3y)=1$. From the $P_5$ path $u_2uu_4yu_1$, $c(u_4y)=2$.

Let $u_2$ is not a degree-2 vertex and $z\in N(u_2)\backslash\{u,x\}$. From the $P_5$ paths $zu_2xu_3u$ and $zu_2xu_1y$, $c(zu_2)=3$. Let again $z'\in N(z)\backslash \{u_2\}$. From the planarity of $G$ and the degree condition of neighbors of $u$, $z'$ can not be any of the vertex in $\{x,y,u_3,u_4\}$. From the $P_5$ paths $z'zu_2xu_3$, $z'zu_2uu_1$ and $z'zu_2uu_4$, $c(z'z)$ is in $\{1,4\}$, $\{1,2\}$ and $\{2,4\}$ which is a contradiction. Therefore, $d(z)=1$ and we are done.

Suppose $u_4$ is not a degree-$2$ vertex and let $t\in N(u_4)\backslash\{u,y\}$. From the $P_5$ paths, $tu_4yu_3x$ and $tu_4yu_1u$, $c(tu_4)=1$. Let $t'\in N(t)\backslash \{u_4\}$. From the $P_5$ paths $t'tu_4yu_1, t'tu_4uu_2$ and $t'tu_4uu_3$, $c(t't)$ is in $\{2,3\}, \{2,4\}$ and $\{3,4\}$ which is a contradiction. Therefore, $d(t)=1$, and we are done.

To finalize the proof of of Case 4, we may assume that that three vertices in $N(u)$ are of degree-$3$, otherwise we are done. Suppose $u_1, u_2$ and $u_3$ be such vertices. From the previous argument at least one vertex in $N(u_1), N(u_2)$ and $N(u_3)$ are degree at most $2$. If we have a degree-$1$ vertex in one of the three sets, then we are done. Otherwise, we have obviously two degree-$2$ vertex in $N(u_1)\cup N(u_2)\cup N(u_3)$.  This completes the proof of Case 4. 
\end{proof}
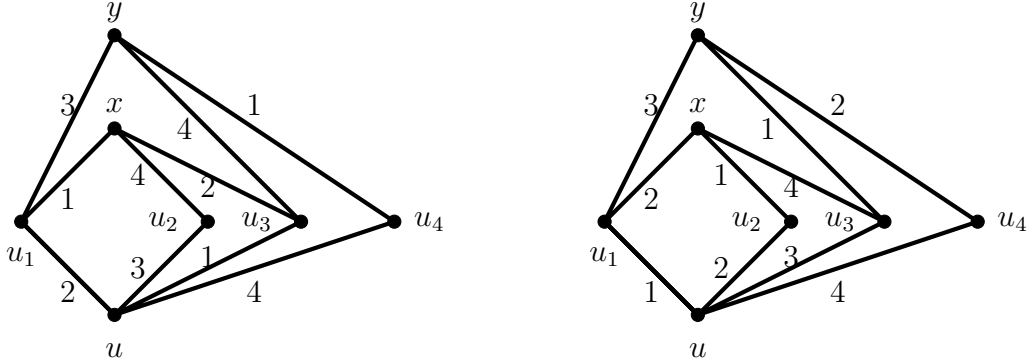
\begin{figure}[ht]
\centering
\begin{tikzpicture}[scale=0.31]
\draw[ultra thick](-4,0)--(0,-4)(-4,0)--(0,4)(-4,0)--(0,8)(0,8)--(12,0)(0,8)--(8,0)(0,4)--(8,0)(0,4)--(4,0)(0,-4)--(-4,0)(0,-4)--(4,0)(0,-4)--(8,0)(0,-4)--(12,0);
\draw[fill=black](-4,0)circle(8pt);
\draw[fill=black](4,0)circle(8pt);
\draw[fill=black](8,0)circle(8pt);
\draw[fill=black](12,0)circle(8pt);
\draw[fill=black](0,8)circle(8pt);
\draw[fill=black](0,4)circle(8pt);
\draw[fill=black](0,-4)circle(8pt);
\node at (-2,5) {$3$};
\node at (-2,1) {$1$};
\node at (-2,-3) {$2$};
\node at (1,2) {$4$};
\node at (1,-2) {$3$};
\node at (4,1.5) {$2$};
\node at (4,-1.5) {$1$};
\node at (6,5) {$1$};
\node at (6,-3) {$4$};
\node at (3,4) {$4$};
\node at (-4,-1.5) {$u_1$};
\node at (0,-5.5) {$u$};
\node at (2.1,0) {$u_2$};
\node at (6.1,0) {$u_3$};
\node at (13.5,0) {$u_4$};
\node at (0,5) {$x$};
\node at (0,9) {$y$};
\end{tikzpicture}\qquad\qquad
\begin{tikzpicture}[scale=0.31]
\draw[ultra thick](-4,0)--(0,-4)(-4,0)--(0,4)(-4,0)--(0,8)(0,8)--(12,0)(0,8)--(8,0)(0,4)--(8,0)(0,4)--(4,0)(0,-4)--(-4,0)(0,-4)--(4,0)(0,-4)--(8,0)(0,-4)--(12,0);
\draw[fill=black](-4,0)circle(8pt);
\draw[fill=black](4,0)circle(8pt);
\draw[fill=black](8,0)circle(8pt);
\draw[fill=black](12,0)circle(8pt);
\draw[fill=black](0,8)circle(8pt);
\draw[fill=black](0,4)circle(8pt);
\draw[fill=black](0,-4)circle(8pt);
\node at (-2,5) {$3$};
\node at (-2,1) {$2$};
\node at (-2,-3) {$1$};
\node at (1,2) {$1$};
\node at (1,-2) {$2$};
\node at (4,1.5) {$4$};
\node at (4,-1.5) {$3$};
\node at (6,5) {$2$};
\node at (6,-3) {$4$};
\node at (3,4) {$1$};
\node at (-4,-1.5) {$u_1$};
\node at (0,-5.5) {$u$};
\node at (2.1,0) {$u_2$};
\node at (6.1,0) {$u_3$};
\node at (13.5,0) {$u_4$};
\node at (0,5) {$x$};
\node at (0,9) {$y$};
\end{tikzpicture}
\caption{Possible subgraphs of $G$.}
\label{daq}
\end{figure}  

In completing the proof, from now on we consider $G$ as a rainbow-$P_5$-free $n$-vertex connected planar containing no degree-$1$ vertex or two adjacent degree-$2$ vertices. With these, we have the following remark.
\begin{remark}\label{lk}
\emph{\begin{enumerate} 
\item[(i)] From Theorem~\ref{cdf}, each vertex in $G$ is of degree at most $4$, i.e., $\Delta(G)\leq 4$.
\item [(ii)] From Theorem~\ref{df}, $G$ contains no two adjacent vertices $u$ and $w$ such that $d(u)=d(w)=4$ and $N(u)\cap N(w)\neq\emptyset$.
\item [(iii)] From Theorem~\ref{nm}, $G$ contains no two non adjacent vertices $u$ and $w$ with $N(u)\cap N(w)\neq\emptyset$ such that $d(u)=d(w)=4$. 
\end{enumerate}}
\end{remark}
We need the following two important observations. 
\begin{enumerate}
\item Let $w_1w_2\in E(G)$, where $d(w_1)=d(w_2)=4$ and $N(w_1)\cup N(w_2)=\emptyset$. From Theorem~\ref{dc}, $N(w_1)$ and $N(w_2)$ contains two degree-$2$ vertices each. It can be seen that for each degree-$2$ vertex $u$ in $N(w_1)$ (similarly for degree-$2$ vertex in $N(w_2)$), there is no degree-$4$ vertex $w_3$, where $w_3\neq w_2$, such that $u\in N_2(w_3)$.
\item Let $v$ be a degree-$2$ vertex  and $v\in N(w_1)\cup N_2(w_1)$, where $d(w_1)=4$ and all its neighbors are with degree at most $3$. It is simple check that there is at most one degree-$4$ vertex $w_2$ such that $v\in N_2(w_2)$.  
\end{enumerate}
Let $x$ be a degree-$4$ vertex and $y$ be a degree-$2$ vertex in $G$. Denote $d_x$ as the number of degree-$2$ vertex in $N(x)\cup N_2(x)$ and denote $e_y$ as the number of degree-$4$ vertex $w$ such that $y\in N(w)\cup N_2(w)$. It can be checked that $d_x\leq 8$ and $e_y\leq 2$. 
\begin{definition} We define a \emph{new-degree} function, $\Tilde{d}_G: V(G)\longrightarrow \mathbb{R}$, as follows:
\begin{equation*}\tilde{d}_G(x)=\begin{cases}
3 \  \ \ \ \ \ \ \ \ \ \ \ \emph{if} \ d_G(x)=3,\\
4-{d_x}/{2} \ \ \  \emph{if} \  d_G(x)=4,\\ 
2+{e_x}/{2} \ \ \ \emph{if} \ d_G(x)=2.
\end{cases}
\end{equation*}
\end{definition}
Clearly $\Tilde{d}_G$ is well-defined and $\Tilde{d}_G(x)\leq 3$ for each vertex $x\in V(G)$. Thus, by the handshaking lemma, $$2e(G)=\sum\limits_{x\in V(G)}\Tilde{d}_G(x)\leq 3n.$$ Therefore, $e(G)\leq {3n}/{2}$ and this completes the proof of Theorem~\ref{t4}.
\section{Conjectures and concluding remarks}
We pose the following conjectures concerning sharp upper bound of the rainbow planar Tur\'an number of $P_6$ and $P_7$.  
\begin{conjecture}\label{cdc1} For all $n$,
$$2n-O(1)\leq\ex_{\p}^*(n,P_6)\leq 2n.$$   
\end{conjecture}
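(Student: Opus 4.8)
The plan for the lower bound is to exhibit a family of properly edge-colored planar graphs with $2n-O(1)$ edges that trivially avoid a rainbow $P_6$. The key observation is that a rainbow $P_6$ requires five edges carrying five distinct colors, so any properly edge-colored graph that uses at most four colors in total is automatically rainbow-$P_6$-free. I would therefore take a $4$-regular planar graph of class $1$ (admitting a proper $4$-edge-coloring), the octahedron $K_{2,2,2}$ being the prototypical example, and fix such a coloring. A $4$-regular simple planar graph on $m$ vertices has exactly $2m$ edges, so $\lfloor n/6\rfloor$ vertex-disjoint octahedra, together with a bounded number of leftover vertices arranged into one small block, give a planar graph on $n$ vertices with $2n-O(1)$ edges in which each component uses only four colors. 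Since disjoint components never create incidences, the global coloring is proper and uses four colors, hence contains no rainbow $P_6$. This yields $\ex_{\p}^{*}(n,P_6)\ge 2n-O(1)$, and in fact exactly $2n$ when $6\mid n$, matching the conjectured upper bound.

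\textbf{Upper bound.} For the upper bound I would mirror the induction-plus-discharging architecture developed above for $P_5$. First I would build the $P_6$-analogues of the coloring schemes of Section~\ref{12}: for each dense local configuration (cherries with several common endpoints, bow-tie- and fish-type gadgets, and their natural extensions) I would classify those proper colorings that avoid a rainbow $P_6$. Because a forbidden path now has five edges, these schemes are looser than their $P_5$ counterparts and more numerous, but each should still pin down the colors on the configuration up to boundedly many patterns. Next, exactly as in Section~\ref{er}, I would pass to a reduced graph: assume $G$ is connected, has minimum degree at least $2$, contains no two adjacent degree-$2$ vertices, and contains no component $H$ on $k$ vertices with $e(H)\le 2k$ (the analogue of a nice-component), since any such feature lets us delete a bounded piece and close by induction on $n$.

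On the reduced graph the goal is to show that the average degree cannot exceed $4$. I would first bound the maximum degree by a constant, proving that a vertex of sufficiently large degree, together with the coloring schemes, always produces either a rainbow $P_6$ or a reducible configuration (the analogue of Theorem~\ref{cdf}). I would then establish local density lemmas parallel to Theorems~\ref{nm},~\ref{df},~\ref{dc} and~\ref{gb}, showing that every high-degree vertex is forced to have enough low-degree vertices in its neighborhood or second neighborhood. Finally I would introduce a modified degree function $\tilde d_G$, as in Section~\ref{cd}, redistributing charge from heavy vertices to the guaranteed nearby light vertices so that $\tilde d_G(x)\le 4$ for every $x$; the handshaking lemma then gives $2e(G)=\sum_{x}\tilde d_G(x)\le 4n$, i.e.\ $e(G)\le 2n$.

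\textbf{Main obstacle.} The hard part is the entire upper-bound case analysis, and it is precisely this difficulty that keeps the bound at the level of a conjecture. For $P_5$ a rainbow path appears after only four edges, so each local configuration admits very few legal colorings and the schemes of Section~\ref{12} are short; for $P_6$ the extra edge hands the adversary one more free color, multiplying the number of admissible patterns and hence the number of cases in every structural lemma. Two points are especially delicate: (i) bounding the maximum degree, since one must rule out \emph{all} rainbow-$P_6$-free colorings of a large star, which is genuinely harder than the degree-$5$ argument of Theorem~\ref{cdf}; and (ii) calibrating the discharging so that the charge routed to each degree-$2$ vertex from the (now possibly several) nearby heavy vertices never overdraws it, the analogue of the bounds $d_x\le 8$ and $e_y\le 2$ used for $P_5$. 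Managing this combinatorial explosion while keeping the discharging balanced is the principal obstacle.
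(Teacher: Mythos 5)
You should first note that the statement you were asked to prove is posed in the paper as Conjecture~\ref{cdc1}, not as a theorem: the paper offers no proof of the upper bound, only a one-line justification of the lower bound. Your lower-bound argument is correct and is exactly the paper's: disjoint copies of the octahedron $K_{2,2,2}$, which is $4$-regular, planar, and $4$-edge-colorable, so a properly $4$-edge-colored copy cannot contain a rainbow $P_6$ (five edges would need five colors). Together with $O(1)$ leftover vertices this gives $\ex_{\p}^{*}(n,P_6)\ge 2n-O(1)$, with equality $2n$ when $6\mid n$. Up to this point your proposal and the paper coincide.

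The upper bound, however, is where your proposal stops being a proof: everything after "mirror the induction-plus-discharging architecture" is a programme, not an argument. None of the $P_6$-analogues of the coloring-scheme lemmas (Lemmas~\ref{c2}--\ref{cv1}) is actually classified; the maximum-degree bound (the analogue of Theorem~\ref{cdf}) is asserted without even a candidate constant; and the discharging is not calibrated — you do not establish the analogues of $d_x\le 8$ and $e_y\le 2$, which in the $P_5$ proof are precisely what make $\tilde d_G(x)\le 3$ hold, so the claimed inequality $\tilde d_G(x)\le 4$ has no support. A useful sanity check on why this cannot be routine: by Theorem~\ref{y1}, for $k\ge 8$ there exist properly colored \emph{maximal} planar graphs with $3n-6$ edges avoiding a rainbow $P_k$ (the double wheel with two alternating cycle colors), so any successful $P_6$ argument must exploit some rigidity of $5$-edge paths that provably evaporates by $7$-edge paths; your sketch never identifies where in the case analysis this distinction enters, and a discharging scheme that did not use it would "prove" a false bound for $P_8$. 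So the correct assessment is: lower bound verified, identical to the paper; upper bound open, and your outline, while a reasonable research plan consistent with the paper's methods, contains the genuine gap that keeps the statement a conjecture.
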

\begin{conjecture}\label{cdc2} For all $n$,
$${5n}/2-O(1)\leq \ex_{\p}^*(n,P_7)\leq {5n}/{2}.$$    
\end{conjecture}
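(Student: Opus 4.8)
The plan is to prove the two inequalities separately, with the lower bound being the quick direction. For the lower bound I would exploit a pigeonhole principle: a rainbow-$P_7$ needs six distinct colors on its six edges, so any properly edge-colored graph in which every component can be properly colored with at most five colors is automatically rainbow-$P_7$-free. The natural dense planar block that is $5$-regular and properly $5$-edge-colorable (class~$1$) is the icosahedron, on $12$ vertices with $30$ edges. Taking $\lfloor n/12\rfloor$ disjoint properly $5$-edge-colored icosahedra and absorbing the $O(1)$ leftover vertices into a small nice component yields a planar rainbow-$P_7$-free graph with $30\lfloor n/12\rfloor = 5n/2 - O(1)$ edges. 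This mirrors the pattern exactly: $K_4$ with three colors realizes the $P_5$ bound $\lfloor 3n/2\rfloor$, and the octahedron with four colors realizes the conjectured $P_6$ bound $2n$.

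For the upper bound $\ex_{\p}^*(n,P_7)\le 5n/2$ I would follow the architecture of the proof of Theorem~\ref{t4}, namely induction together with discharging. First I would set up reducibility as in Section~\ref{er}: call a component $H$ on $k$ vertices \emph{nice} if $e(H)\le 5k/2$, and observe that a nice component, a vertex of degree at most $1$, or a suitable pair of adjacent low-degree vertices can be deleted to close the bound by induction, so one may argue about a connected $G$ avoiding all of these. Next I would bound the maximum degree. The $P_5$ argument (Theorem~\ref{cdf}) forces $\Delta\le 4$; here I expect $\Delta(G)\le 6$, the heuristic being that the reducibility threshold equals the number of edges of the forbidden path, since a proper coloring presents $d(v)$ distinctly colored edges at a vertex $v$ and a degree-$7$ vertex should be forced to extend to a rainbow path on six edges. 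Establishing this demands the $P_7$-analogues of the bow-tie, fish, medium-pair, and heavy-pair coloring schemes of Section~\ref{12}.

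The discharging would then aim for a modified degree $\tilde d$ with $\tilde d(v)\le 5$ for every $v$, after which the handshaking lemma gives $2e(G)=\sum_v\tilde d(v)\le 5n$, i.e.\ $e(G)\le 5n/2$. Since $\Delta\le 6$, only degree-$6$ vertices exceed the target, so I would use a rule of the shape $\tilde d(v)=6-d_v/2$, where $d_v$ counts the low-degree vertices in $N(v)\cup N_2(v)$, paralleling the $4-d_x/2$ rule used for $P_5$. The supporting structural input is the $P_7$-analogue of Theorem~\ref{gb}: every degree-$6$ vertex must have at least two low-degree vertices in its first or second neighborhood, and dually each low-degree vertex may compensate only boundedly many degree-$6$ vertices, so the charge redistributes consistently.

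The main obstacle is the local coloring analysis underpinning the second and third steps. For $P_5$ the color-forcing lemmas are rigid because a configuration quickly exhausts the three or four available colors; for $P_7$ a configuration may use up to five colors before any contradiction surfaces, so each structural lemma fragments into many more sub-cases and its conclusions become conditional on which colors coincide. Pinning down the neighborhoods of degree-$5$ and degree-$6$ vertices — proving that each is compensated by the required number of light vertices without creating a rainbow-$P_7$ — is where the genuinely new and lengthy case analysis lies, and it is the reason the statement is posed here only as a conjecture.
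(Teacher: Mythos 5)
Since this statement is a conjecture, the paper offers no proof of the upper bound; its only supporting content is the lower-bound construction, and there your argument coincides with the paper's exactly: disjoint copies of a properly $5$-edge-colored icosahedron (the paper's concluding remark states precisely this), with the pigeonhole observation that a rainbow-$P_7$ needs six distinct colors while each component sees only five. That direction of your proposal is correct, modulo the standard fact (asserted but not proved in the paper either) that the icosahedron is class~$1$, i.e.\ admits a $1$-factorization into five perfect matchings; with $\lfloor n/12\rfloor$ copies and $O(1)$ leftover vertices you indeed get $30\lfloor n/12\rfloor = 5n/2 - O(1)$ edges, matching the paper's bound.

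Your upper-bound portion, however, is a program rather than a proof, and you are right to flag it as such — the gap is genuine and is exactly why the statement is posed as a conjecture. Concretely: every quantitative step in your outline rests on unproven $P_7$-analogues of the paper's structural machinery. The claim $\Delta(G)\le 6$ is only a heuristic; in the $P_5$ case the corresponding bound $\Delta\le 4$ (Theorem~\ref{cdf}) is not a one-line consequence of counting colors at a vertex but the output of the rigidity lemmas of Section~\ref{12} (bow-tie, fish, medium-pair, heavy-pair), and that rigidity is precisely what breaks for $P_7$: Lemma~\ref{l5} forces color coincidences because a $P_5$ has only four edges, whereas with six colors available before a rainbow-$P_7$ appears, a medium-pair-type configuration need not force any coincidence at all, so the conclusions of the analogous lemmas would be weaker and conditional. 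Consequently the reducibility steps (the $5k/2$ nice-component induction), the analogue of Theorem~\ref{gb} guaranteeing two light vertices near each heavy vertex, and the bounds on your counters $d_v$ and $e_y$ that make the rule $\tilde d(v)=6-d_v/2$ yield $\tilde d\le 5$ are all unestablished; without them the handshaking step $2e(G)\le 5n$ has no support. In short: your lower bound is correct and identical in approach to the paper's; your upper bound correctly identifies the architecture one would try, but proves nothing the paper does not already leave open.
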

The lower bound in Conjecture~\ref{cdc1} can be realized by taking disjoint copies of the octahedron. The graph is $4$-edge colorable and as a result rainbow-$P_6$-free. On the other hand, the lower bound in Conjecture~\ref{cdc2} can be realized by taking disjoint copies of the icosahedron. It is easy to see the graph is $5$-edge colorable and hence rainbow-$P_7$-free. 


\section*{Acknowledgments}
Gy\H{o}ri’s research was partially supported by the National Research, Development and Innovation Office NKFIH, grants K132696, K116769, and K126853. Martin's research was partially supported by Simons Foundation grant \#709641. Paulos's research was partially supported by  National Research, Development and Innovation Office NKFIH grant K5081, Simons Foundation grant and CEU Global Teaching Fellowship Program grant. 

Paulos would like to thank Gy\H{o}ri, Martin and Iowa State University, Department of Mathematics, for arranging and hosting his research visit during the course of this work.    
\section*{Dedications}
Paulos wants to dedicate this research work to all \textbf{Ethiopian university professors}, who are under \textbf{political repression} by the current \textbf{savage} and \textbf{corrupted regime}.

\end{document}